\numberwithin{equation}{section}
\theoremstyle{plain}
\newtheorem{thm}{Theorem}[section]
\newtheorem{lemma}[thm]{Lemma}
\newtheorem{prop}[thm]{Proposition}
\newtheorem{coroll}[thm]{Corollary}
\newtheorem{fact}[thm]{Fact}
\theoremstyle{definition}
\newtheorem{defn}[thm]{Definition}
\newtheorem{bij}[thm]{Bijection}
\newtheorem{remark}[thm]{Remark}
\newtheorem{ex}[thm]{Example}
\newtheorem{prob}[thm]{Problem}
\newcommand{\Arb}{{\rm Arb}}
\newcommand{\cE}{{\mathcal{E}}}
\newcommand{\med}{{\rm med}}
\newcommand{\nex}{{\rm next}}
\newcommand{\nextout}{{\rm nextout}}
\newcommand{\prevout}{{\rm prevout}}
\definecolor{v}{rgb}{0.25,0.05,0.7}
\begin{document}

\begin{frontmatter}[classification=text]

\title{On Canonical Sandpile Actions of Embedded Graphs}

\author[pgom]{Lilla T\'othm\'er\'esz\thanks{This work was supported by the Counting in Sparse Graphs Lendület Research Group of the Alfr\'ed Rényi Institute of Mathematics.
}}

\begin{abstract}
    The sandpile group of a connected graph is a group whose cardinality is the number of spanning trees. The group is known to have a canonical simply transitive action on spanning trees if the graph is embedded into the plane. However, no canonical action on the spanning trees was known for the nonplanar case. 
We show that for any embedded Eulerian digraph, one can define a canonical simply transitive action of the sandpile group on compatible Eulerian tours (a set whose cardinality equals the number of spanning arborescences). This enables us to give a new proof that the rotor-routing action of a ribbon graph is independent of the root if and only if the embedding is into the plane (originally proved by Chan, Church and Grochow).

Recently, Merino, Moffatt and Noble defined a sandpile group variant (called Jacobian) for embedded graphs, whose cardinality is the number of quasi-trees. Baker, Ding and Kim showed that this group acts canonically on the quasi-trees.
We show that the Jacobian of an embedded graph is the usual sandpile group of the medial digraph, and the action by Baker et al.~agrees with the action of the sandpile group of the medial digraph on Eulerian tours (which fact is made possible by the existence of a canonical bijection between Eulerian tours of the medial digraph and quasi-trees due to Bouchet).
\end{abstract}
\end{frontmatter}

\section{Introduction}

The sandpile group of a connected graph is an Abelian group, whose order equals the number of spanning trees. More generally, the sandpile group of an Eulerian digraph is a group whose order equals the number of in-arborescences rooted at an arbitrary vertex. Ellenberg \cite{Ellenberg} asked if the sandpile group of an (undirected) graph has a ``canonical'' simply transitive action on the spanning trees. More precisely, he asked if such a canonical action exists,  if the graph is embedded into an orientable surface. Holroyd,  Levine, Mészáros, Peres, Propp and Wilson \cite{Holroyd08} defined the rotor routing action, which is an action of the sandpile group on the spanning trees, and depends on an embedding and a fixed vertex (root) of the graph. 
Chan, Church and Grochow \cite{Chan15} showed that the rotor-routing action is independent of the root if and only if the embedding is into the plane. Analogous results were proved about another sandpile group action associated to embedded graphs, the Bernardi action \cite{Baker-Wang}, and the relationship of the two sandpile group actions also has a rich literature \cite{Baker-Wang,ding2021rotor,SW_torsor,trinity}, in particular, the rotor-routing action agrees with the Bernardi action for plane graphs, but not in general. Kálmán, Lee, and the author of this paper gave a basepoint-free definition for the planar rotor-routing/Bernardi action \cite{trinity}. Furthermore, Ganguly and McDonough proved there is essentially a unique sandpile action for plane graphs, if one requires nice behavior with respect to deletion and contraction \cite{GM}. We note that the rotor routing action is also defined for sandpile groups of digraphs, as an action on spanning in-arborescences. 

The existence of a canonical action in the non-plane case remained open for a while. Recently, Merino, Moffatt and Noble \cite{merino2023critical} defined an ``embedded sandpile group'' called the Jacobian\footnote{The term ``Jacobian'' is typically used synonymously with ``sandpile group''. In this paper, we will use Jacobian for the embedded sandpile group of Merino, Moffatt and Noble, and sandpile group for the abstract sandpile group.} of the embedded graph. This group has cardinality equal the number of quasi-trees. Quasi-trees are those spanning subgraphs of an embedded graph that have one boundary component. If the embedding is into the plane, they are exactly the spanning trees, but for higher genus, there are quasi-trees that are not trees. The Jacobian is isomorphic to the sandpile group if the embedding is into the plane.
Baker, Ding and Kim \cite{BDK} gave an alternative definition for the Jacobian group of an embedded graph (and generalized it to regular orthogonal matroids), and they proved that the Jacobian of an embedded graph has a canonical action on the quasi-trees.

We have three main results: 
\begin{enumerate}
	\item Firstly, we show that, in fact, any embedded Eulerian digraph has a canonical sandpile action if instead of spanning trees, one considers Eulerian tours compatible with the embedding. We say that an Eulerian tour is compatible with the embedding if at each vertex $v$, the order in which the tour traverses the out-edges of $v$ agrees with the positive orientation around $v$. We call the action the tour-rotor action, since it can be obtained from rotor routing. 
	More precisely:
	For an embedded Eulerian digraph, with any root vertex, the rotor-routing action of the sandpile group can be lifted to a canonical action on Eulerian tours compatible with the embedding.  Interestingly, we do not yet see a way to give a canonical definition for the tour-rotor action, though we suspect that there should be such a definition. These results can be found in Section \ref{sec:tour-rotor_action}.
	
	\item Based on the tour-rotor action, we give a new proof that the rotor-routing action of an embedded graph is independent of the root if and only if the embedding is into the plane (originally due to Chan, Church and Grochow \cite{Chan15}). Although the main ideas behind the new proof are very similar to those of \cite{Chan15}, we feel that the new proof is technically a bit simpler. Also, using the tour-rotor action, one can formulate a definition for the rotor-routing action which is similar to the definition of the Bernardi action. This might help to find further connections between these actions. These results can be found in Section \ref{s:action_of_bidirected_graphs}.
	
	\item Finally, we show that the Jacobian of an embedded graph is canonically isomorphic to the sandpile group of the medial digraph. It is known \cite{Bouchet} that the Eulerian tours of the medial digraph are canonically in bijection with the quasi-trees. Furthermore, in this case, all Eulerian tours are compatible. We show that the action defined by Baker, Ding and Kim \cite{BDK} agrees with the tour-rotor action of the sandpile group of the medial digraph. 
	These results can be found in Section \ref{s:medial}.
\end{enumerate}

Previously, Kálmán, Lee and the author proved \cite{trinity} that for a plane embedded graph $G$, the sandpile group of $G$ is canonically isomorphic to the sandpile group of the medial digraph. In some sense, (3) generalizes this result. The paper \cite{trinity} also proved that the sandpile group of the medial digraph of a plane graph has a canonical simply transitive action on spanning trees. Indeed, one can think of (characteristic vectors of) spanning trees as chip-configurations for the medial digraph. In \cite{trinity}, it was proved that spanning trees represent the linear equivalence classes with $|V|-1$ chips for the medial digraph, hence the sandpile group of the medial digraph acts on them by addition. The Bernardi action agrees with this action via the isomorphism of the sandpile group and the medial sandpile group.

If the embedding is not into the plane, both elements of the above picture break down: The sandpile group of the medial digraph is no more isomorphic to the sandpile group of $G$, and spanning trees are no longer a system of representatives of a coset of the medial sandpile group.
However, as it turns out, the medial sandpile group still has a canonical action on the quasi-trees.

\section{Preliminaries}\label{sec:prelim}

\subsection{Graphs, spanning trees and spanning arborescences}

In this paper we denote undirected graphs by $G$, while we denote directed graphs by $D$. We allow multiple edges and loops. We will always suppose  that our undirected graphs are connected and out directed graphs are weakly connected. 
In an undirected graph, we denote the degree of $v$ by $d(v)$. In a directed graph, we denote the out-degree of $v$ by $d^+(v)$ and the in-degree of $v$ by $d^-(v)$. We use the notation $\overrightarrow{uv}_e$ for an edge $e$ whose tail is $u$ and head is $v$.

For an undirected graph, a \emph{spanning tree} is a connected, cycle-free subgraph containing each vertex.
In a directed graph, a \emph{spanning in-arborescence} rooted at $v$ is a subgraph whose underlying undirected graph is a spanning tree, and there is a directed path from each vertex to $v$. We denote the set of spanning in-arborescences of a digraph $D$ rooted at $v$ by $\Arb(D,v)$.
A spanning in-arborescence $T\in\Arb(D,v)$ has out-degree 1 for each vertex $u\neq v$, and outdegree 0 for $v$. We use the notation $T(u)$ for the (unique) edge leaving node $u\neq v$.

\subsection{Ribbon graphs and Eulerian tours}
A \emph{ribbon digraph} $D$ is a digraph together with a cyclic ordering of the (in- and out-) edges incident to $v$ for each vertex $v$. For a vertex $v$, and incident edge $e$, we denote the edge following $e$ at $v$ by $\nex(v,e)$. Here, the orientation of $e$ can be anything and also for $\nex(v,e)$.

It is well known, that for each ribbon digraph $D$, one can construct a closed orientable surface $\Sigma$ such that $D$ is cellularly embedded in $\Sigma$, and the positive orientation around each vertex of $D$ induces the ribbon structure (see for example \cite{Thomassen_embeddings}). We will alternatively use the abstract or the embedding viewpoint.

A digraph $D$ is \emph{Eulerian} if the in-degree agrees with the out-degree for each vertex. An Eulerian ribbon digraph is \emph{balanced}, if around each vertex, the in- and the out-edges alternate in the cyclic order. 

An (undirected) \emph{ribbon graph} $G$ is a graph together with a cyclic ordering of the edges incident to $v$ for each vertex $v$. 
To an undirected ribbon graph $G$, we can naturally associate a balanced Eulerian ribbon digraph $G^{\leftrightarrows}$: 
Substitute each edge $e=uv$ with two oppositely directed edges $\overrightarrow{uv}_{e_1}$ and $\overrightarrow{vu}_{e_2}$, let $\overrightarrow{vu}_{e_2}=\nex(u, \overrightarrow{uv}_{e_1})$ and $\overrightarrow{uv}_{e_1}=\nex(v, \overrightarrow{vu}_{e_2})$, and otherwise insert the two edges to the original place of $e$ in the ribbon structure of $G$. We will call $G^{\leftrightarrows}$ the \emph{normal bidirected ribbon graph} corresponding to $G$. Note that if the ribbon structure of $G$ was obtained from the embedding into an orientable surface $\Sigma$, then the ribbon structure of $G^{\leftrightarrows}$ can also be obtained from $\Sigma$ and vice versa. In particular, $G$ is a planar ribbon graph if and only if $G^{\leftrightarrows}$ is planar. 

An \emph{Eulerian tour} of a digraph $D$ is a closed walk that uses each edge of $D$ exactly once. It is well-known that a digraph has an Eulerian tour if and only if it is Eulerian. 
We will think of an Eulerian tour without specifying its beginning and endpoint. That is, for us, an Eulerian tour is a cyclic ordering of the edge set such that for any edge, its head agrees with the tail of its subsequent edge.

For an Eulerian ribbon digraph $D$, we call an Eulerian tour of $D$ \emph{compatible with the ribbon structure}, if around each vertex $v$, the cyclic order in which the Eulerian tour traverses the out-edges of $v$ agrees with the cyclic order induced by the ribbon structure. Note that we do not assume anything about the order in which the tour traverses the in-edges of $v$. 

\begin{ex}
	Figure \ref{f:compatible_tour_and_arborescence} shows an Eulerian ribbon digraph embedded into the plane. The red curve indicates a compatible Eulerian tour, that corresponds to the cyclic ordering $e_1, e_4, e_3, e_2, e_8, e_9, e_5, e_6, e_7$ of the edges. To check that this is a compatible tour, note that around the middle vertex, the out-edges are traversed in the cyclic order $e_4,e_2,e_6$ which corresponds to the positive orientation. The other vertices have out-degree 2, hence compatibility is automatically satisfied at those vertices.
\end{ex}

\begin{figure}
	\begin{center}
		\begin{tikzpicture}[-,>=stealth',auto,scale=0.8,ultra thick]
			\tikzstyle{v}=[circle,scale=0.5,fill,draw]
			\begin{scope}[shift={(-4,0)}]
				\node[v] (1) at (0, 0) {};
				\node[v] (2) at (4, 0) {};
				\node[v] (3) at (2, 3.4) {};
				\node[v] (4) at (2, 1.2) {};
				\draw[color=red,rounded corners=10pt,line width=3pt] (4) -- (2.2, 2.3) -- (2, 3.1) -- (1.7, 2.3) -- (1.9, 1.3) -- (0.8, 0.8) -- (0.2, 0.4) -- (0.6, 1.8) -- (2, 3.5) -- (3.4, 1.8) -- (3.8, 0.4) -- (3.2, 0.8) -- (2.2, 1.1)  -- (2.8, 0.4)-- (3.7, 0) -- (2, -0.4) -- (0.3, 0) -- (1.2, 0.4) -- (4);
				\draw[->,rounded corners=15pt] (1) -- (1.2, 0.4) -- (4);
				\draw[->,rounded corners=15pt] (4) -- (0.8, 0.8) -- (1);
				\draw[->,rounded corners=15pt] (4) -- (2.8, 0.4) -- (2);
				\draw[->,rounded corners=15pt] (2) -- (3.2, 0.8) -- (4);
				\draw[->,rounded corners=15pt] (3) -- (1.7, 2.3) -- (4);
				\draw[->,rounded corners=15pt] (4) -- (2.2, 2.3) -- (3);
				\draw[->,rounded corners=20pt] (3) -- (3.4, 1.8) -- (2);
				\draw[->,rounded corners=20pt] (2) -- (2, -0.4) -- (1);
				\draw[->,rounded corners=20pt] (1) -- (0.6, 1.8) -- (3);
				\node at (1.4,0.3) {$e_1$};
				\node at (0.85, 1.15) {$e_2$};
				\node at (1.5,1.9) {$e_3$};
				\node at (2.4,1.9) {$e_4$};
				\node at (2.8,1.1) {$e_5$};
				\node at (2.7,0.3) {$e_6$};
				\node at (2,-0.7) {$e_7$};
				\node at (0.6, 2.5) {$e_8$};
				\node at (3.4, 2.5) {$e_9$};
				\node at (-0.2, 3.2) {$\cE$};
			\end{scope}
			\begin{scope}[shift={(3,0)}]
				\node[v] (1) at (0, 0) {};
				\node[v] (2) at (4, 0) {};
				\node[v] (3) at (2, 3.4) {};
				\node[v] (4) at (2, 1.2) {};
				\draw[->,dashed,rounded corners=15pt] (1) -- (1.2, 0.4) -- (4);
				\draw[->,dashed,rounded corners=15pt] (4) -- (0.8, 0.8) -- (1);
				\draw[->,rounded corners=15pt,line width=2pt] (4) -- (2.8, 0.4) -- (2);
				\draw[->,dashed,rounded corners=15pt] (2) -- (3.2, 0.8) -- (4);
				\draw[->,dashed,rounded corners=15pt] (3) -- (1.7, 2.3) -- (4);
				\draw[->,dashed,rounded corners=15pt] (4) -- (2.2, 2.3) -- (3);
				\draw[->,rounded corners=20pt,line width=2pt] (3) -- (3.4, 1.8) -- (2);
				\draw[->,rounded corners=20pt,line width=2pt] (2) -- (2, -0.4) -- (1);
				\draw[->,dashed,rounded corners=20pt] (1) -- (0.6, 1.8) -- (3);
				\node at (1.4,0.3) {$e_1$};
				\node at (0.85, 1.15) {$e_2$};
				\node at (1.5,1.9) {$e_3$};
				\node at (2.4,1.9) {$e_4$};
				\node at (2.8,1.1) {$e_5$};
				\node at (2.7,0.3) {$e_6$};
				\node at (2,-0.7) {$e_7$};
				\node at (0.6, 2.5) {$e_8$};
				\node at (3.4, 2.5) {$e_9$};        
				\node at (-0.2, 3.2) {$A_{e_1}(\cE)$};
				\draw[->] (-2, 1.5) -- (-1, 1.5);
			\end{scope}
		\end{tikzpicture}
	\end{center}
	\caption{An Eulerian tour $\cE$ compatible with the planar embedding (left panel). It corresponds to the cyclic ordering $e_1, e_4, e_3, e_2, e_8, e_9, e_5, e_6, e_7$. The right panel shows the in-arborescence $A_{e_1}(\cE)$ obtained from $\cE$ by choosing $e_1$ as the initial edge of the tour. \label{f:compatible_tour_and_arborescence}}
\end{figure}
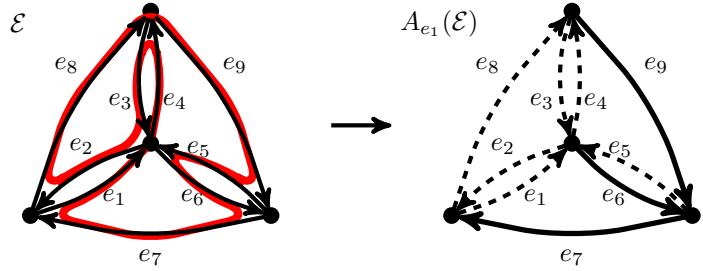

By the BEST theorem \cite{de1951circuits}, for any given vertex $v$, the number of compatible Eulerian tours of $D$ agrees with the number of in-arborescences of $D$ rooted at $v$.
Let us explain a bijection between these objects, as we will need it.

\begin{defn}[bijection between compatible Eulerian tours and $\Arb(D,v)$, \cite{de1951circuits}]\label{def:Euler_tour_arb_bijection}
	To define the bijection, we need to fix an edge $e=\overrightarrow{vw}_e$. 
	For a compatible Eulerian tour $\mathcal{E}$, let us now think of $\overrightarrow{vw}_e$ as its first edge. Then, for each vertex $u\neq v$, take the out-edge of $u$ that is last used by the tour. This way, we get a subgraph where the out-degree of each vertex is 1, except for $v$, where the out-degree is 0. By \cite{de1951circuits}, this subgraph is an in-arborescence rooted at $v$. 
	We denote the obtained in-arborescence by $A_{\overrightarrow{vw}_e}(\mathcal{E})$.
\end{defn}

The above assignment is a bijection between the set of compatible Eulerian tours, and in-arborescences rooted at $v$ \cite{de1951circuits}. See Figure \ref{f:compatible_tour_and_arborescence} for an example. We use $A^{-1}_{\overrightarrow{vw}_e}$ for the inverse map.
Note that in \cite{de1951circuits} they prove that there are $\prod_{v\in V} (d^+(v)-1)!$ Eulerian tours where the above procedure gives $A_{\overrightarrow{vw}_e}(\mathcal{E})$. These correspond exactly to the ways to choose the cyclic order in which the out-edges are traversed around each vertex. If these cyclic orders are fixed (as is the case with ribbon structures and compatible Eulerian tours) we get a bijection.

\subsection{Sandpile group}

Let $D=(V,E)$ be an Eulerian digraph. 

We denote the free Abelian group on $V$ by $\mathbb{Z}^V$. We refer to a vector $\mathbf{x}\in \mathbb{Z}^V$ as a chip configuration. We denote the coordinate of $\mathbf{x}$ corresponding to $v\in V$ by $\mathbf{x}(v)$. We think of $\mathbf{x}(v)$ as the number of chips on $v$ (which might also be negative). 
We denote by $\mathbb{Z}_0^V$ the subgroup of $\mathbb{Z}^V$ where the sum of the coordinates is 0.

The sandpile group is defined via the so-called chip-firing moves. The \emph{firing} of a vertex $v$ modifies chip configuration $\mathbf{x}$ by decreasing the number of chips on $v$ by the outdegree of $v$, and increasing the number of chips on each $w$ by the number of edges pointing from $v$ to $w$. 
We define two chip configurations $\mathbf{x}$ and $\mathbf{y}$ to be \emph{linearly equivalent} if there is a sequence of firings that transforms $\mathbf{x}$ to $\mathbf{y}$. We denote $\mathbf{x}\sim_D \mathbf{y}$. If the graph $D$ is clear from the context, we simply write $\mathbf{x} \sim \mathbf{y}$. Linear equivalence is indeed an equivalence relation (symmetry is the only nontrivial property, but since $D$ is Eulerian, firing all vertices but $v$ reverses the operation of firing $v$). Linear equivalence preserves the sum of the chips. Note that the addition or removal of loop edges to $D$ does not modify the notion of linear equivalence, since the effect of a loop edge ``cancels out'' during a firing.

\begin{defn}[Sandpile group]
	For an Eulerian digraph $D=(V,E)$, the \emph{sandpile group} is defined as $S(D)=\mathbb{Z}_0^V/_{\sim_D}$.
\end{defn}

The following statement is a version of the matrix-tree theorem. 

\begin{fact}\cite{Holroyd08}\label{fact:order_of_sandpile_group}
	For an Eulerian digraph $D$, the order of $S(D)$ is equal to the number of arborescences rooted at an arbitrary vertex.
\end{fact}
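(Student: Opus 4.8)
The plan is to identify $S(D)$ with the cokernel of a reduced Laplacian and then invoke the directed matrix--tree theorem. Write $\Delta_v\in\mathbb{Z}^V_0$ for the chip configuration recording the effect of firing $v$, so $\Delta_v(v)=-d^+(v)$ and $\Delta_v(w)$ is the multiplicity of $\overrightarrow{vw}$ for $w\neq v$; equivalently $\Delta_v=-(\text{the }v\text{-th row of }L)$, where $L=\mathrm{diag}(d^+)-A$ is the out-degree Laplacian and $A_{vw}$ is the multiplicity of $\overrightarrow{vw}$. By definition two configurations in $\mathbb{Z}^V_0$ are linearly equivalent exactly when they differ by an element of the lattice $\Lambda:=\langle\,\Delta_v: v\in V\,\rangle\leq\mathbb{Z}^V_0$, so $S(D)=\mathbb{Z}^V_0/\Lambda$ and the task is to compute the index $[\mathbb{Z}^V_0:\Lambda]$.

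Next I would fix a vertex $v_0$ and use the group isomorphism $\pi\colon\mathbb{Z}^V_0\xrightarrow{\ \sim\ }\mathbb{Z}^{V\setminus\{v_0\}}$ that forgets the $v_0$-coordinate (it is bijective because that coordinate equals minus the sum of the others). For $v\neq v_0$ the generator $\pi(\Delta_v)$ is, up to sign, the $v$-th row of the \emph{reduced Laplacian} $L'$, obtained from $L$ by deleting the row and column indexed by $v_0$. The one place the Eulerian hypothesis enters is the identity $\sum_{v\in V}\Delta_v=0$: the $w$-coordinate of this sum is $d^-(w)-d^+(w)=0$. Consequently $\pi(\Delta_{v_0})=-\sum_{v\neq v_0}\pi(\Delta_v)$ already lies in the $\mathbb{Z}$-span of the rows of $L'$, so $\pi(\Lambda)$ is \emph{exactly} the row lattice of $L'$. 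Hence
\[
S(D)=\mathbb{Z}^V_0/\Lambda\ \cong\ \mathbb{Z}^{V\setminus\{v_0\}}\big/(\text{row lattice of }L'),
\]
whose order is $|\det L'|$ when $L'$ is nonsingular, and infinite otherwise (Smith normal form).

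It remains to show $\det L'=|\Arb(D,v_0)|$, which is Tutte's directed matrix--tree theorem for the out-degree Laplacian: the $v_0$-minor of $L$ counts the spanning subgraphs in which every non-root vertex has out-degree $1$ and $v_0$ has out-degree $0$, i.e.\ in-arborescences rooted at $v_0$ (one pins down the normalization on a single edge $\overrightarrow{uv}$, where $\det L'=1=|\Arb(D,v)|$). I would either cite this or prove it by induction via deletion--contraction of a non-loop edge leaving a fixed non-root vertex. Finiteness is automatic, since a weakly connected Eulerian digraph is strongly connected, so $\Arb(D,v_0)\neq\emptyset$ and $\det L'\geq 1$. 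To justify the wording ``arbitrary vertex'', I would add the remark that for Eulerian $D$ the matrix $L$ has zero row sums \emph{and} zero column sums, so $\mathrm{adj}(L)$ has all entries equal; in particular all principal $(|V|-1)\times(|V|-1)$ minors $\det L'$ coincide, and hence so do the arborescence counts for different roots.

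The argument is short; the only friction points I anticipate are bookkeeping ones: keeping the sign/transpose conventions for $L$ consistent with the firing rule and with the convention that in-arborescences point toward the root, and deciding whether to cite or to reprove the directed matrix--tree theorem (the single genuinely nontrivial ingredient). Conceptually the sole structural input is $\sum_v\Delta_v=0$, equivalently that $\mathbf{1}$ lies in both the left and the right kernel of $L$, which is precisely where ``$D$ Eulerian'' is used; everything else is linear algebra over $\mathbb{Z}$.
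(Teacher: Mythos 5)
The paper does not prove this statement; it is recorded as a Fact with a citation to Holroyd et al., and the argument you give (identify $S(D)$ with the cokernel of the reduced out-degree Laplacian using $\sum_v\Delta_v=0$, then apply the directed matrix--tree theorem, with equality of all principal minors coming from the zero row \emph{and} column sums) is exactly the standard proof from that reference, and it is correct. The only step worth making explicit is that ``linearly equivalent'' is defined in the paper via a sequence of firings rather than via the full lattice $\Lambda$; these coincide for Eulerian $D$ because firing all vertices except $v$ undoes the firing of $v$, an observation the paper already makes in Section~\ref{sec:prelim}.
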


For a nonloop edge $e=\overrightarrow{vw}_e$, let us denote by $\chi_e\in\mathbb{Z}^V$ the vector that has 
\begin{equation}\label{eq:chi}
	\chi_e(u) = \left\{\begin{array}{cl} 1 & \text{if $u=w$,}  \\
		-1 & \text{if $u=v$},\\
		0 & \text{otherwise}.
	\end{array} \right.
\end{equation}

If $e$ is a loop (that is, $v=w$) then $\chi_e$ is defined as $\mathbf{0}$.
Note that for a strongly connected digraph $D$, the vectors $\{\chi_e: e\in E(D)\}$ generate $\mathbb{Z}_0^V$, hence their equivalence classes generate the sandpile group $S(D)$.

The sandpile group of an undirected graph $G$ is defined as $S(G):=S(G^{\leftrightarrows})$, where by $G^{\leftrightarrows}$ we mean the bidirected Eulerian digraph where each edge $uv$ is substituted with two oppositely directed edges $\overrightarrow{uv}$ and $\overrightarrow{vu}$.

In Section \ref{s:action_of_bidirected_graphs}, we will need a condition to show that a chip configuration $\mathbf{x}$ on a bidirected graph $G^{\leftrightarrows}$ is not linearly equivalent to $\mathbf{0}$ (the all-zero vector). The following condition appeared for example in \cite{Baker-Wang,BBY} but as the terminologies are a bit different, we give a proof.

Let $G^{\leftrightarrows}$ be a bidirected graph.
Let us call an edge set $C\subseteq E(G^{\leftrightarrows})$ a directed cycle if it spans a connected subgraph and each vertex in $V(C)$ has in-degree 1 and out-degree 1.
Let us call an edge set $C^*$ a directed elementary cut, if there exists a partition $V=U\sqcup W$ such that $C^*=\{\overrightarrow{uw}_e\in E(G^{\leftrightarrows}): u\in U, w\in W\}$ and $U$ and $W$ both span connected subgraphs in $G$. (Note that there will also be edges leading from $W$ to $U$, we do not include these into $C^*$.) We denote $C^*=C^*(U,W)$.

\begin{prop}\cite{Baker-Wang,BBY}\label{prop:zero_lin_ekv_cond}
	Suppose that $F\subseteq E(G^{\leftrightarrows})$ is an edge set. We have $\sum_{e\in F} \chi_e\sim\mathbf{0}$ if and only if $F$ is a disjoint union of directed cycles and directed elementary cuts.
\end{prop}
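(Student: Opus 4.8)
The plan is to prove the two implications separately: $(\Leftarrow)$ is a direct computation, and $(\Rightarrow)$ is an induction on $|F|$. For $(\Leftarrow)$, first note that if $C$ is a directed cycle then every vertex of $V(C)$ has in-degree and out-degree $1$ in $C$ and every other vertex has degree $0$, so $\sum_{e\in C}\chi_e=\mathbf{0}$. If $C^*(U,W)$ is a directed elementary cut, a direct inspection shows that $\sum_{e\in C^*(U,W)}\chi_e$ is exactly the chip-configuration change produced by firing each vertex of $U$ once: the edges of $G^{\leftrightarrows}$ inside $U$ cancel in pairs, the edges from $W$ to $U$ never carry a chip, and the edges from $U$ to $W$, which are precisely the edges of $C^*(U,W)$, are the ones that move a chip. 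Hence $\sum_{e\in C^*(U,W)}\chi_e\sim\mathbf{0}$. Since $\sim$ is compatible with addition, for any $F$ that is a disjoint union of directed cycles and directed elementary cuts, $\sum_{e\in F}\chi_e$ is a sum of terms each $\sim\mathbf{0}$, hence $\sim\mathbf{0}$. (Connectedness of $U,W$ is not used here.)

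For $(\Rightarrow)$, suppose $\sum_{e\in F}\chi_e\sim\mathbf{0}$. Whenever both $\overrightarrow{uv}$ and $\overrightarrow{vu}$ of an edge $uv$ of $G$ lie in $F$, the pair $\{\overrightarrow{uv},\overrightarrow{vu}\}$ is a directed cycle contributing $\mathbf{0}$; peeling all such pairs off, it suffices to treat the case where $F$ uses at most one orientation of each edge of $G$. Because $G^{\leftrightarrows}$ is Eulerian, $\sum_{e\in F}\chi_e\sim\mathbf{0}$ means $\sum_{e\in F}\chi_e=\sum_{v}z(v)\beta_v$ for some $z\in\mathbb{Z}^V$, where $\beta_v$ is the change caused by firing $v$; evaluated at a vertex $v$ this reads $\mathrm{in}_F(v)-\mathrm{out}_F(v)=\sum_{w}\mu(vw)\bigl(z(w)-z(v)\bigr)$, the sum over the neighbours $w$ of $v$, with $\mu(vw)$ the multiplicity of $vw$. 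I induct on $|F|$. If $z$ is constant then $\sum_{e\in F}\chi_e=\mathbf{0}$, $F$ is Eulerian, and it decomposes into simple directed cycles in the usual way. Otherwise let $t=\max_v z(v)$, let $X$ be the vertex set of a connected component of the subgraph of $G$ induced by $\{v:z(v)=t\}$, and put $W=V\setminus X$ (so $W\neq\emptyset$ and every edge of $G$ with exactly one endpoint in $X$ joins $X$ to $W$). For $v\in X$ the displayed identity gives $\mathrm{out}_F(v)-\mathrm{in}_F(v)\ge d_W(v)$, where $d_W(v)$ counts the edges of $G$ from $v$ into $W$; summing over $v\in X$ yields $|F_{X\to W}|-|F_{W\to X}|\ge m(X,W)$, where $F_{A\to B}$ is the set of edges of $F$ oriented from $A$ to $B$ and $m(X,W)$ the number of edges of $G$ between $X$ and $W$. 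After the reduction, each edge of $G$ between $X$ and $W$ contributes to exactly one of $F_{X\to W}$, $F_{W\to X}$ or to neither, so $m(X,W)=|F_{X\to W}|+|F_{W\to X}|+n_0$ with $n_0\ge 0$; combined with the previous inequality this forces $F_{W\to X}=\emptyset$ and $n_0=0$. Hence every edge of $G$ between $X$ and $W$ is oriented $X\to W$ in $F$, i.e.\ $C^*(X,W)\subseteq F$ and no edge of $F$ enters $X$. Since $\sum_{e\in C^*(X,W)}\chi_e=\sum_{v\in X}\beta_v$, removing $C^*(X,W)$ from $F$ leaves an edge set that is strictly smaller, still uses at most one orientation of each edge, and still satisfies the hypothesis; by induction it is a disjoint union of directed cycles and directed elementary cuts.

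The remaining point — and the one I expect to require the most care — is that the cut $C^*(X,W)$ produced above need not itself be elementary, since $W$ may be disconnected. I will handle this with a separate lemma: in a connected graph, every directed cut $C^*(A,V\setminus A)$ is a disjoint union of directed elementary cuts. This goes by induction on $c(G[A])+c(G[V\setminus A])$, the total number of connected components of the two induced subgraphs: if, say, $G[A]$ is disconnected, write $A=A_1\sqcup A_2$ with no edge of $G$ between $A_1$ and $A_2$, and observe $C^*(A,V\setminus A)=C^*\bigl(A_1,A_2\cup(V\setminus A)\bigr)\sqcup C^*\bigl(A_2,A_1\cup(V\setminus A)\bigr)$; connectedness of $G$ forces every component of $G[A_i]$ to have an edge to $V\setminus A$, which makes $c(G[A_i])+c\bigl(G[A_{3-i}\cup(V\setminus A)]\bigr)$ strictly smaller than $c(G[A])+c(G[V\setminus A])$, so the induction applies (and symmetrically if $G[V\setminus A]$ is disconnected); the base case, both sides connected, is an elementary cut by definition. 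Feeding the resulting decomposition of $C^*(X,W)$ into the inductive step completes the argument. Besides this lemma, the delicate part is the counting in the inductive step; I note that it genuinely relies on the bidirected (symmetric) structure of $G^{\leftrightarrows}$, via the identity $\mathrm{in}_F(v)-\mathrm{out}_F(v)=\sum_w\mu(vw)(z(w)-z(v))$ together with the integrality of $z$, which is what makes $z(w)-z(v)\le -1$ for $w\in W$, $v\in X$.
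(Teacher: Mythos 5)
Your proof is correct, and in spirit it follows the same strategy as the paper's: the forward direction is the identical direct computation, and for the converse both arguments extract cuts from the level sets of the integer vector $z$ recording how many times each vertex is fired. The execution differs in two ways worth noting. First, the paper treats all level sets $V_0,\dots,V_k$ of the (normalized) firing vector at once and uses a circulation identity across each threshold cut to rule out edges jumping more than one level, concluding directly that $F=\bigcup_i C^*(V_0\cup\dots\cup V_{i-1},\,V_i\cup\dots\cup V_k)$; you instead peel off only the top level, where the counting $|F_{X\to W}|-|F_{W\to X}|\ge m(X,W)$ together with $m(X,W)=|F_{X\to W}|+|F_{W\to X}|+n_0$ forces the entire cut $C^*(X,W)$ into $F$ with nothing returning, and an induction on $|F|$ finishes. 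This is a more local but equivalent form of the same least-action/level-set argument. Second, and more substantively, your auxiliary lemma decomposing an arbitrary directed cut $C^*(A,V\setminus A)$ of a connected graph into edge-disjoint directed \emph{elementary} cuts addresses a point the paper leaves implicit: the threshold cuts produced in the paper's proof need not have connected sides either, so the paper's argument also needs this lemma (or your device of choosing $X$ to be a single connected component of the top level set, which keeps one side connected from the outset). Your induction on $c(G[A])+c(G[V\setminus A])$ closes that gap correctly. In short: same approach, different bookkeeping, and a more complete treatment of the elementarity of the cuts.
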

\begin{proof}
	Firing each vertex in $U$ once for some $U\subset V$ transforms $\mathbf{0}$ into $\sum_{e\in C^*(U,V-U)} \chi_e$. 
	Also, if $C$ is a directed cycle, then $\sum_{e\in C}\chi_e=\mathbf{0}$. This proves the ``if'' direction.

	For the ``only if'' direction, note that if we have $\sum_{e\in E(G^{\leftrightarrows})}\lambda_e \chi_e=\sum_{e\in E(G^{\leftrightarrows})}\mu_e \chi_e$, then $\lambda-\mu$ is a circulation. In particular, for each partition $V=U\sqcup W$, 
	we have $\sum\{\lambda_{\overrightarrow{uw}_e}-\mu_{\overrightarrow{uw}_e} : u\in U, w\in W\}=\sum\{\lambda_{\overrightarrow{wu}_f}-\mu_{\overrightarrow{wu}_f} : u\in U, w\in W\}$ or in other words
	\begin{equation}\label{eq:circulation}
		\sum\{\lambda_{\overrightarrow{uw}_e}-\lambda_{\overrightarrow{wu}_f} : u\in U, w\in W\}=\sum\{\mu_{\overrightarrow{uw}_e}-\mu_{\overrightarrow{wu}_f} : u\in U, w\in W\}.
	\end{equation}
	
	Suppose now that $\mathbf{x}:= \sum_{e\in F} \chi_e\sim\mathbf{0}$. Since for a directed cycle $C$, we $\sum_{e\in C}\chi_e=\mathbf{0}$, we can suppose that $F$ has no directed cycle as subgraph.
	
	Take the sequence of firings that transforms $\mathbf{0}$ to $\sum_{e\in F} \chi_e$. Since firing each vertex in $V$ once does not modify the chip configuration, we can suppose that each vertex is fired a nonnegative number of times, and there is at least 1 vertex that is fired 0 times. Let $V_i$ $(i=0,1,\dots)$ be the set of vertices that are fired $i$ times. Then, we can describe the net chip movement as follows: If for $\overrightarrow{uv}_e$, we have $u\in V_{i+j}$ and $v\in V_i$ for some $j\geq 0$, then $j$ chips traverse $\overrightarrow{uv}_e$. Take
	\begin{equation*}
		\mu_{\overrightarrow{uv}_e} = \left\{\begin{array}{cl} j & \text{if $u\in V_{i+j},$ $v\in V_i$ with $j\geq 1$}  \\
			0 & \text{if $u\in V_{i+j},$ $v\in V_i$ with $j \leq 0$} .
		\end{array} \right.
	\end{equation*}
	Then, $\sum_{e\in E(G^{\leftrightarrows})}\mu_e \chi_e = \sum_{e\in F} \chi_e$.
	
	We claim that we cannot have an edge $\overrightarrow{uv}_e$ leading from $V_{i+j}$ to $V_i$ with $j>1$. Indeed, if that is the case, then by taking $W:=V_0 \cup \dots \cup V_i$ and $U:=V_{i+1} \cup \dots$, each edge $e$ leading from $U$ to $W$ has $\mu_e\geq 1$, and at least one of them has $\mu_e> 1$, while each edge leading from $W$ to $U$ has $\mu_e=0$.
	On the other hand, in $\sum_{e\in F} \chi_e$, each edge has coefficient 0 or 1, hence we cannot satisfy \eqref{eq:circulation} for these two sets of coefficients.
	
	Suppose that $k$ is the largest number such that $V_k\neq\emptyset$. Consequently, $V_i\neq \emptyset$ for each $i=0, \dots k$, and $F=\cup_{i=1}^k C^*(V_0 \cup \dots \cup V_{i-1},V_i \cup \dots \cup V_k)$. This finishes the proof.
\end{proof}

\subsection{Rotor-routing}
\subsubsection{Eulerian digraphs}

Rotor routing is a relaxed version of chip-firing. It is also interesting since one can think of it as a derandomized random walk \cite{rr_and_markov}. From our point of view, the most interesting property of rotor routing is that it enables one to define a free, transitive action of the sandpile group of an Eulerian digraph $D$ on $\Arb(D,v)$.

Rotor-routing is defined for a ribbon digraph. For a directed edge $\overrightarrow{vu}_e$, let us introduce the notation $\nextout(v, \overrightarrow{vu}_e)$ for the next out-edge after $\overrightarrow{vu}_e$ in the cyclic order around $v$. Also, let us denote by $\prevout(v, \overrightarrow{vu}_e)$ the out-edge preceding $\overrightarrow{vu}_e$ in the cyclic order at $v$.

A \emph{rotor configuration} on $D$ is a function $\varrho$ that assigns to each 
vertex $v$ an out-edge with tail $v$. We call $\varrho(v)$ the \emph{rotor} at $v$.

A configuration of the rotor-routing game is a pair $(\mathbf{x},\varrho)$, where $\mathbf{x}$ is a chip-configuration, and $\varrho$ is a rotor configuration on $D$. We also call such pairs \emph{chip-and-rotor configuration}.

Given a configuration $(\mathbf{x},\varrho)$, a \emph{routing} at vertex $v$ results in the  
configuration $(\mathbf{x}', \varrho')$, where
$\varrho'$ is the rotor configuration with
$$
\varrho'(u) = \left\{\begin{array}{cl} \varrho(u) & \text{if $u\neq v$,}  \\
	\nextout(v,\varrho(v)) & \text{if $u=v$},
\end{array} \right.
$$
and $\mathbf{x}'=\mathbf{x}-\mathbf{1}_v+\mathbf{1}_{v'}$ where $v'$ is the head of $\varrho'(v)$.

We call the routing at $v$ \emph{legal} (with respect to the configuration $(\mathbf{x},\varrho)$), if $\mathbf{x}(v)>0$, i.e.~the routing at $v$ does not create a negative entry at $v$. Note that other vertices might have a negative number of chips. 
We talk about an \emph{unconstrained routing}, or simply a \emph{routing} if we do not require anything about $\mathbf{x}(v)$.

Let $D$ be an Eulerian ribbon digraph.
Holroyd et al.~\cite{Holroyd08} defined a free, transitive action of the sandpile group of $D$ on $\Arb(D,v)$, using the rotor-router operation. For completeness, we give the original definition, but we will instead work with an alternative characterization from \cite{alg_rotor} given after the definition.

\begin{defn}[Rotor-router action, \cite{Holroyd08}]\label{def::rr_action}
	Let $D$ be an Eulerian digraph. The \emph{rotor-router action} is defined with respect to a base vertex $v\in V$ that we call the \emph{root}. It is a group action of $S(D)$ on $\Arb(D,v)$, denoted by $r_v$.
	
	For some $\mathbf{x}\in S(D)$ and $T\in\Arb(D,v)$, the arborescence $r_v(\mathbf{x},T)$ is defined as follows: Choose a chip configuration $\mathbf{x}'\sim \mathbf{x}$ such that $\mathbf{x}'(u)\geq 0$ for each $u\neq v$. Such an $\mathbf{x}'$ can easily be seen to exist.
	Fix any out-edge $\overrightarrow{vw}_e$ of $v$.
	Let $\varrho=T\cup\overrightarrow{vw}_e$. (This is indeed a rotor configuration.)
	
	Start a legal rotor-routing game from $(\mathbf{x}',\varrho)$,
	such that $v$ is not allowed to be routed.
	Continue until each chip arrives at $v$. Holroyd et al.~\cite{Holroyd08} shows that this procedure ends after finitely many steps, and in the final configuration $(\mathbf{0},\varrho')$, the edges $\{\varrho'(u):u\in V-v\}$ form a spanning in-arborescence of $D$ rooted at $v$. $r_v(\mathbf{x},T)$ is defined to be this arborescence.
\end{defn}

Holroyd et al.~\cite{Holroyd08} shows that for Eulerian digraphs, $r_v(\mathbf{x},T)$ is well defined, i.e.,~the definition does not depend on our choice of $\mathbf{x}'$ and on the choice of the legal game. (It is easy to see that the choice of $w$ is immaterial in the construction.) From this, it also follows that this is indeed a group action of $S(D)$ on $\Arb(D,v)$, i.e.,~$r_v(\mathbf{x},T)=r_v(\mathbf{x}',T)$ if $\mathbf{x}\sim_D \mathbf{x}'$. It is also known that $r_v$ is a simply transitive action \cite{Holroyd08}, that is, for each $T,T'\in \Arb(D,v)$, there is up to linear equivalence a unique chip-configuration $\mathbf{x}$ such that $r_v(\mathbf{x},T)=T'$.

As mentioned above, we will work with an alternative definition for the rotor-routing action from \cite{alg_rotor}, that we now explain.
This characterization uses a version of linear equivalence for rotor-routing:
\begin{defn}[linear equivalence of chip-and-rotor configurations, \cite{alg_rotor}]
	Two configurations $(\mathbf{x}_1,\varrho_1)$ and $(\mathbf{x}_2,\varrho_2)$ are linearly equivalent, if $(\mathbf{x}_2,\varrho_2)$ can be reached from $(\mathbf{x}_1,\varrho_1)$ by a sequence of unconstrained routings. We denote this by $(\mathbf{x}_1,\varrho_1)\sim_D (\mathbf{x}_2,\varrho_2)$, or by $(\mathbf{x}_1,\varrho_1)\sim (\mathbf{x}_2,\varrho_2)$ if the graph is clear from the context.
\end{defn}

If $D$ is strongly connected, this is indeed an equivalence relation \cite[Proposition 3.4]{alg_rotor} (symmetry is the only nontrivial property).
Note the following simple, but important fact:
\begin{fact}\label{fact:chip_lin_ekv_and_rotor_lin_ekv}
	If $\mathbf{x}$ and $\mathbf{y}$ are chip configurations, and $\varrho$ is an arbitrary rotor configuration of the strongly connected ribbon digraph $D$, then $(\mathbf{x},\varrho) \sim_D (\mathbf{y}, \varrho)$ if and only if $\mathbf{x}\sim_D \mathbf{y}$.
\end{fact}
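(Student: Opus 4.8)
The plan is to build everything on one elementary observation: performing $d^+(v)$ consecutive unconstrained routings at a vertex $v$ cycles the rotor $\varrho(v)$ once through all out-edges of $v$ and hence returns it to its original value, leaves all other rotors untouched, and moves exactly one chip along each out-edge of $v$. In other words, a block of $d^+(v)$ routings at $v$ has precisely the effect of a single chip-firing at $v$ on the chip configuration, while fixing the rotor configuration. (Here $d^+(v)=d^-(v)\ge 1$ since $D$ is Eulerian and weakly, hence strongly, connected.)

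For the ``if'' direction, I would start from a sequence of firings witnessing $x\sim_D y$, say firing $v_1,\dots,v_m$ in this order. Replace the $i$-th firing by a block of $d^+(v_i)$ consecutive unconstrained routings at $v_i$. By the observation above, each block starts and ends with rotor configuration $\varrho$ and reproduces the corresponding firing on chips, so by induction on $m$ the concatenated routing sequence takes $(x,\varrho)$ to $(y,\varrho)$. Hence $(x,\varrho)\sim_D(y,\varrho)$.

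For the ``only if'' direction, fix a sequence of unconstrained routings taking $(x,\varrho)$ to $(y,\varrho)$, and for each vertex $v$ let $n_v$ be the total number of times $v$ is routed along the way. The rotor at $v$ advances by exactly one cyclic step each time $v$ is routed and is unchanged otherwise; since the final rotor configuration is again $\varrho$, we must have $d^+(v)\mid n_v$, so write $n_v=d^+(v)\,k_v$ with $k_v\ge 0$. Now compute the net change of the chip configuration: the out-edges used in the $n_v$ routings at $v$ are obtained by starting at $\varrho(v)$ and stepping through the cyclic order at $v$, and since $n_v$ is a multiple of $d^+(v)$, each out-edge of $v$ is used exactly $k_v$ times. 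Thus the net chip movement due to all routings at $v$ is: $v$ loses $d^+(v)k_v$ chips and each neighbour $w$ gains $k_v$ times the multiplicity of $\overrightarrow{vw}$, which is exactly the effect of firing $v$ a total of $k_v$ times. Summing over $v$ shows $y$ is obtained from $x$ by firing each $v$ exactly $k_v$ times, i.e.\ $x\sim_D y$.

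I do not expect a genuine obstacle; the only point needing care is the bookkeeping in the ``only if'' direction — one must note that even though the routings at $v$ are interleaved with routings elsewhere, they still visit the out-edges of $v$ in the fixed cyclic order (because $\varrho(v)$ changes only when $v$ itself is routed), so that $n_v\in d^+(v)\mathbb{Z}$ forces each out-edge of $v$ to be used the same number of times. With that in hand, both implications collapse to the correspondence ``one firing at $v$'' $\leftrightarrow$ ``$d^+(v)$ routings at $v$''.
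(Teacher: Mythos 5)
Your proposal is correct and follows essentially the same argument as the paper's proof: the correspondence between one firing at $v$ and a block of $d^+(v)$ routings at $v$, in both directions. Your additional bookkeeping about interleaved routings still stepping through the cyclic order at $v$ is a careful spelling-out of what the paper states more briefly.
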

\begin{proof}
	If the rotor configurations agree in some chip-and-rotor configurations, then in any routings transforming one to the other, each vertex $v$ needs to be routed $t\cdot d^+(v)$ times for some integer $t$, which has the same effect on the chip configuration as $t$ firings of $v$.
	Conversely, a firing of $v$ can be realized by routing vertex $v$ exactly $d^+(v)$ times, and the rotors return to their initial positions.
\end{proof}

\begin{prop}{\cite[Proposition 3.16]{alg_rotor}}
	\label{prop:alt_def_rotor_action}
	Let $D$ be an Eulerian ribbon digraph, $\mathbf{x}\in S(D)$, and $T,T'\in \Arb(D,v)$.
	Fix an arbitrary out-edge $\overrightarrow{vw}_e\in E$ of $v$.
	We have $r_v(\mathbf{x},T)=T'$ if and only if $(\mathbf{x}, T \cup \overrightarrow{vw}_e)\sim (\mathbf{0},T'\cup \overrightarrow{vw}_e)$.
\end{prop}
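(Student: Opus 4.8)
The plan is to prove the two implications separately, using only the original legal-game description (Definition~\ref{def::rr_action}), Fact~\ref{fact:chip_lin_ekv_and_rotor_lin_ekv}, and the properties of $r_v$ already recorded from \cite{Holroyd08} — namely that it is a well-defined, transitive group action. As a preliminary remark: since $D$ is Eulerian and weakly connected it is strongly connected, so linear equivalence of chip-and-rotor configurations is an equivalence relation, and I will use its symmetry and transitivity freely.

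For the forward implication, assume $r_v(x,T)=T'$, and fix, as in Definition~\ref{def::rr_action}, a representative $x'\sim_D x$ with $x'(u)\ge 0$ for every $u\neq v$. The legal game computing $r_v(x,T)$ is in particular a sequence of unconstrained routings taking $(x',T\cup\overrightarrow{vw})$ to a configuration $(\mathbf 0,\varrho')$; since $v$ is never routed we have $\varrho'(v)=\overrightarrow{vw}$, and by construction $\varrho'$ restricted to $V-v$ equals $T'$, so $\varrho'=T'\cup\overrightarrow{vw}$. Hence $(x',T\cup\overrightarrow{vw})\sim(\mathbf 0,T'\cup\overrightarrow{vw})$. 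By Fact~\ref{fact:chip_lin_ekv_and_rotor_lin_ekv}, $x'\sim_D x$ gives $(x,T\cup\overrightarrow{vw})\sim(x',T\cup\overrightarrow{vw})$, and transitivity finishes this direction.

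For the converse, assume $(x,T\cup\overrightarrow{vw})\sim(\mathbf 0,T'\cup\overrightarrow{vw})$ and set $T'':=r_v(x,T)$. The forward implication gives $(x,T\cup\overrightarrow{vw})\sim(\mathbf 0,T''\cup\overrightarrow{vw})$, hence $(\mathbf 0,T'\cup\overrightarrow{vw})\sim(\mathbf 0,T''\cup\overrightarrow{vw})$. The hard part is to deduce $T'=T''$ from this — i.e.\ that linear equivalence of two arborescence-rotor configurations carrying zero chips and sharing the rotor $\overrightarrow{vw}$ at the root forces the two arborescences to coincide. I would obtain this from the known transitivity of $r_v$: choose $y\in S(D)$ with $r_v(y,T'')=T'$; the forward implication yields $(y,T''\cup\overrightarrow{vw})\sim(\mathbf 0,T'\cup\overrightarrow{vw})\sim(\mathbf 0,T''\cup\overrightarrow{vw})$, so Fact~\ref{fact:chip_lin_ekv_and_rotor_lin_ekv} gives $y\sim_D\mathbf 0$, i.e.\ $y$ is the identity of $S(D)$. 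Since $r_v$ is a group action, $T'=r_v(y,T'')=r_v(\mathbf 0,T'')=T''$.

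I expect everything except that last deduction to be routine bookkeeping with the definitions. If one preferred not to invoke transitivity of $r_v$, the uniqueness step could be argued more directly: a routing acts on the chip coordinate by a translation, so from $(\mathbf 0,\varrho_1)\sim(\mathbf 0,\varrho_2)$ one gets $(z,\varrho_1)\sim(z,\varrho_2)$ for any $z$; taking $z$ a large non-negative vector supported off $v$ one can run legal games from both sides back to configurations with all chips at $v$ and compare the outputs. But leaning on the properties of $r_v$ already available in the excerpt is shorter.
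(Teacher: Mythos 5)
The paper does not actually prove Proposition~\ref{prop:alt_def_rotor_action}; it is imported as a citation of \cite[Proposition 3.16]{alg_rotor}, so there is no in-paper argument to compare against. Your proof is correct and self-contained given the facts recorded in the preliminaries. The forward direction is exactly the right bookkeeping: a legal game is a sequence of unconstrained routings, $v$ is never routed so the root rotor stays at $\overrightarrow{vw}$, and Fact~\ref{fact:chip_lin_ekv_and_rotor_lin_ekv} plus transitivity of $\sim$ replaces $x'$ by $x$. The converse is the genuinely nontrivial half, and your resolution is clean: reducing the injectivity statement ``$(\mathbf{0},T'\cup\overrightarrow{vw})\sim(\mathbf{0},T''\cup\overrightarrow{vw})$ implies $T'=T''$'' to the transitivity of $r_v$ together with Fact~\ref{fact:chip_lin_ekv_and_rotor_lin_ekv} is a legitimate use of the cited properties of the Holroyd et al.\ action, and it is not circular (the forward implication is applied to the pair $(y,T'')$, which was already established for arbitrary inputs). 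The only weak point is the parenthetical alternative in your last paragraph: running legal games from $(z,\varrho_1)$ and $(z,\varrho_2)$ and ``comparing the outputs'' would again only give that the two outputs are linearly equivalent zero-chip configurations, so it does not obviously avoid the uniqueness issue it is meant to replace; but since this is offered as an aside and your main argument stands on its own, this does not affect correctness.
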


We note that \cite[Proposition 3.16]{alg_rotor} is phrased a little differently, but the above form is used for example in the proof of \cite[Theorem 3.17]{alg_rotor}.

\subsubsection{Undirected graphs}

The rotor-routing action is also defined for undirected ribbon graphs $G$, as an action of $S(G)$ on spanning trees. This is related to the Eulerian case as follows:

As explained in Section \ref{sec:prelim}, to an undirected ribbon graph $G$, one can associate the normal bidirected ribbon digraph $G^{\leftrightarrows}$.
By definition $S(G)=S(G^{\leftrightarrows})$. 

Also, for any fixed $u\in V$, there is a bijection between spanning trees of $G$ and $\Arb(G^{\leftrightarrows},u)$.
Let $T$ be a spanning tree of $G$. There is a unique orientation of $T$ that is an in-arborescence rooted at $u$. Let us denote by $T^u$ the in-arborescence of $G^{\leftrightarrows}$ corresponding to this orientation. Also, to any in-arborescence in $\Arb(G^{\leftrightarrows},u)$, we can associate the spanning tree of $G$ that we get by forgetting the orientations.

Now for some $\mathbf{x}\in S(G)(=S(G^{\leftrightarrows}))$ and spanning tre $T$, the rotor routing action of $G$ is defined as follows: 
\begin{defn}[Rotor-routing action for undirected graphs \cite{Holroyd08}]\label{def:rr_action_undirected}
	Let $r_u(\mathbf{x},T)$ be the spanning tree corresponding to the arborescence $r_u(\mathbf{x},T^u)$, where by $r_u(\mathbf{x},T^u)$ we mean the rotor-routing action of $S(G^{\leftrightarrows})$ on $\Arb(G^{\leftrightarrows},u)$.    
\end{defn}

\section{A canonical action of Eulerian ribbon digraphs on compatible Eulerian tours}\label{sec:tour-rotor_action}

Via the bijection between compatible Eulerian tours and $\Arb(D,v)$ (Definition \ref{def:Euler_tour_arb_bijection}), for each root vertex $v$, the rotor-router action with root $v$ can be lifted to an action on compatible Eulerian tours. 
Our main theorem is that this lifted action is canonical. That is, it does not depend on the root $v$, and neither on the edge $\overrightarrow{vw}_e$ used to define the bijection.

\begin{thm}\label{thm:Eulerian_tour_action_well_defined}
	Let $D$ be an Eulerian ribbon digraph, and $\overrightarrow{uv}_e$ and $\overrightarrow{wz}_f$ be arbitrary edges. 
	Let $\mathbf{x}\in S(D)$, and let $\mathcal{E}$ and $\mathcal{E}'$ be compatible Eulerian tours. If we have $$r_u(\mathbf{x}, A_{\overrightarrow{uv}_e}(\mathcal{E}))=A_{\overrightarrow{uv}_e}(\mathcal{E}'),$$ then
	$$r_w(\mathbf{x}, A_{\overrightarrow{wz}_f}(\mathcal{E}))=A_{\overrightarrow{wz}_f}(\mathcal{E}').$$
\end{thm}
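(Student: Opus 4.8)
The plan is to reduce the statement to a purely combinatorial fact about the BEST bijection together with the rotor–routing linear-equivalence characterization from Proposition~\ref{prop:alt_def_rotor_action}. By that proposition, the hypothesis $r_u(x,A_{\overrightarrow{uv}}(\mathcal{E}))=A_{\overrightarrow{uv}}(\mathcal{E}')$ is equivalent to
\[
(x,\; A_{\overrightarrow{uv}}(\mathcal{E})\cup\overrightarrow{uv})\;\sim_D\;(\mathbf{0},\; A_{\overrightarrow{uv}}(\mathcal{E}')\cup\overrightarrow{uv}),
\]
and the desired conclusion is equivalent to the analogous linear equivalence with $u,v$ replaced by $w,z$. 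Using Fact~\ref{fact:chip_lin_ekv_and_rotor_lin_ekv}, each of these splits into the fixed chip-equivalence $x\sim\mathbf 0$ part and a rotor-only part once we can get the two rotor configurations to coincide; so the real content is to understand how $A_{\overrightarrow{uv}}(\mathcal{E})\cup\overrightarrow{uv}$ and $A_{\overrightarrow{wz}}(\mathcal{E})\cup\overrightarrow{wz}$ are related as chip-and-rotor configurations, and likewise for $\mathcal{E}'$.

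The key step I would isolate is a lemma of the following shape: \emph{for a compatible Eulerian tour $\mathcal{E}$ and any two edges $\overrightarrow{uv}$, $\overrightarrow{wz}$, the chip-and-rotor configuration $(\mathbf{0}\text{-ish},\,A_{\overrightarrow{uv}}(\mathcal{E})\cup\overrightarrow{uv})$ is taken to $(\,\cdot\,,\,A_{\overrightarrow{wz}}(\mathcal{E})\cup\overrightarrow{wz})$ by a canonical sequence of unconstrained routings — one ``lap'' of the tour — that depends only on $\mathcal{E}$ (not on $x$).} Concretely, the tour $\mathcal{E}$, read starting from $\overrightarrow{uv}$, prescribes in what order to route vertices: each time the tour leaves a vertex along an out-edge, route that vertex, so that its rotor advances to point along exactly the edge the tour next uses out of it. After one full lap of the tour, every rotor has cycled once through all its out-edges and returned home, and the ``last out-edge used'' arborescence recorded is exactly $A_{\overrightarrow{uv}}(\mathcal{E})$ — this is just the BEST theorem bijection made dynamic. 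Truncating this lap to start at $\overrightarrow{wz}$ instead of $\overrightarrow{uv}$ produces $A_{\overrightarrow{wz}}(\mathcal{E})$, and the partial routing between the two truncation points is an explicit unconstrained routing sequence. Compatibility of $\mathcal{E}$ with the ribbon structure is exactly what guarantees that ``advance the rotor'' and ``follow the tour's next out-edge'' agree at every vertex, so the routing is well defined. Carrying out this single lap starting from $A_{\overrightarrow{uv}}(\mathcal{E})\cup\overrightarrow{uv}$ and stopping when the partial arborescence becomes $A_{\overrightarrow{wz}}(\mathcal{E})\cup\overrightarrow{wz}$ gives
\[
(\mathbf{0},\,A_{\overrightarrow{uv}}(\mathcal{E})\cup\overrightarrow{uv})\;\sim_D\;(c_{\mathcal{E}},\,A_{\overrightarrow{wz}}(\mathcal{E})\cup\overrightarrow{wz})
\]
for some chip vector $c_{\mathcal{E}}\in\mathbb{Z}_0^V$ determined by $\mathcal{E}$ and the pair of edges, and the same identity for $\mathcal{E}'$ with the \emph{same} chip vector $c_{\mathcal{E}'}=c_{\mathcal{E}}$ provided $\mathcal{E}$ and $\mathcal{E}'$ visit vertices in ``the same pattern'' between the two marked edges — which needs a small argument (see below).

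Granting the lemma, the theorem follows by a diagram chase: start from $(\mathbf{0},A_{\overrightarrow{wz}}(\mathcal{E})\cup\overrightarrow{wz})$, reverse the $\mathcal{E}$-lap to reach $(-c_{\mathcal{E}},A_{\overrightarrow{uv}}(\mathcal{E})\cup\overrightarrow{uv})$, add $x$ (legitimate since adding a fixed chip vector commutes with routings, by Fact~\ref{fact:chip_lin_ekv_and_rotor_lin_ekv} applied at the level of linear equivalence), use the hypothesis to move to $(\,\cdot\,,A_{\overrightarrow{uv}}(\mathcal{E}')\cup\overrightarrow{uv})$, then run the $\mathcal{E}'$-lap forward to land at $(\,\cdot\,,A_{\overrightarrow{wz}}(\mathcal{E}')\cup\overrightarrow{wz})$; bookkeeping the chip count and invoking Proposition~\ref{prop:alt_def_rotor_action} in the reverse direction yields $r_w(x,A_{\overrightarrow{wz}}(\mathcal{E}))=A_{\overrightarrow{wz}}(\mathcal{E}')$.

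The main obstacle I anticipate is the clause ``$\mathcal{E}$ and $\mathcal{E}'$ visit vertices in the same pattern,'' i.e.\ showing $c_{\mathcal{E}}=c_{\mathcal{E}'}$ — a priori the two tours could traverse the stretch between $\overrightarrow{uv}$ and $\overrightarrow{wz}$ differently, moving different numbers of chips across. I expect this is handled by noticing that it suffices to treat the case where $\overrightarrow{wz}=\nextout(u,\overrightarrow{uv})$ for some vertex $u$ with a second out-edge, or more simply the case $w=u$ and $\overrightarrow{wz}$ is the very next out-edge of $u$ used by the tour after $\overrightarrow{uv}$ — in that case the routing is a single routing of $u$, the chip moves from $u$ to one fixed neighbor determined by the ribbon structure \emph{independently of the tour}, and one reduces the general case to a composition of such elementary steps by changing the base edge one out-slot at a time, and then one vertex at a time along a path in $D$ (here weak/strong connectivity of $D$ is used). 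Alternatively, one can bypass $c_{\mathcal{E}}$ altogether: reformulate everything so that the relevant objects are rotor configurations only, arrange that the chip configurations at the matched endpoints literally coincide by choosing compatible laps, and invoke Fact~\ref{fact:chip_lin_ekv_and_rotor_lin_ekv}. Either way, the heart of the proof is the ``one lap of the tour realizes the BEST bijection as an unconstrained routing'' lemma, and once that is set up cleanly the root-independence is a formal consequence.
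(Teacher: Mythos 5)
Your overall framework coincides with the paper's: translate both hypothesis and conclusion into linear equivalences of chip-and-rotor configurations via Proposition~\ref{prop:alt_def_rotor_action}, prove a change-of-base-edge lemma of the form $(\mathbf{0},A_{\overrightarrow{uv}}(\mathcal{E})\cup\overrightarrow{uv})\sim_D(c_{\mathcal{E}},A_{\overrightarrow{wz}}(\mathcal{E})\cup\overrightarrow{wz})$, and finish with the diagram chase you describe. You also correctly locate the crux: everything hinges on $c_{\mathcal{E}}$ being independent of the tour. But you do not establish this, and the reduction you sketch for it contains a false step, so the one nontrivial point of the proof is left open.

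The fix is a telescoping computation that is already implicit in your own setup, and is exactly what the paper's Lemma~\ref{lem:change_of_arborescences_when_changing_first_edge} records. Write the segment of the tour between the two marked edges as $\overrightarrow{u_0u_1}=\overrightarrow{uv},\overrightarrow{u_1u_2},\dots,\overrightarrow{u_{k-1}u_k}=\overrightarrow{wz}$. Shifting the base edge by one position along the tour is a single routing at the shared vertex $u_{i+1}$ (its rotor advances one slot, from $\prevout(u_{i+1},\overrightarrow{u_{i+1}u_{i+2}})$ to $\overrightarrow{u_{i+1}u_{i+2}}$, by compatibility), which moves one chip from $u_{i+1}$ to $u_{i+2}$ and changes no other rotor. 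Summing these displacements over the segment gives $\sum_i(\mathbf{1}_{u_{i+1}}-\mathbf{1}_{u_i})=\mathbf{1}_z-\mathbf{1}_v$: the individual steps depend heavily on $\mathcal{E}$, but their sum depends only on the heads of the two marked edges, so $c_{\mathcal{E}}=c_{\mathcal{E}'}=\mathbf{1}_z-\mathbf{1}_v$ and you are done. By contrast, your proposed reduction is incorrect as stated: passing from base edge $\overrightarrow{uv}$ to the next out-edge of $u$ used by the tour is \emph{not} ``a single routing of $u$.'' Between two consecutive departures from $u$ the tour traverses an entire closed segment, and shifting the base edge across it routes every intermediate vertex once per departure of the tour from that vertex, changing all of those rotors; which vertices these are depends on the tour. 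The subsequent ``one vertex at a time along a path in $D$'' step inherits the same problem. So no elementary step of your reduction is tour-independent on its own --- only the telescoped chip total is, and that observation is the missing ingredient.
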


We prove the Theorem at the end of this section.
As a corollary, one obtains that the following action of the sandpile group (of an Eulerian ribbon digraph) on compatible Eulerian tours is a well-defined, canonical action.
\begin{defn}[Tour-rotor action]\label{def:canonical_action_on_tours}
	Let $D$ be an Eulerian ribbon digraph. For $\mathbf{x}\in S(G)$ and a compatible Eulerian tour $\mathcal{E}$, we define $r(\mathbf{x},\mathcal{E})=\mathcal{E}'$, where $\mathcal{E'}$ is the unique compatible Eulerian tour such that for an arbitrary edge $\overrightarrow{vw}_e$, we have $r_v(\mathbf{x}, A_{\overrightarrow{vw}_e}(\mathcal{E}))=A_{\overrightarrow{vw}_e}(\mathcal{E}')$. We call this the tour-rotor action.
\end{defn}
\begin{coroll}\label{cor:Euler_sandpile_acts_on_tours}
	The tour-rotor action is a well-defined, canonical simply transitive action of the sandpile group of an Eulerian ribbon digraph on the set of compatible Eulerian tours.
\end{coroll}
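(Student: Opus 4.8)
The plan is to deduce both Corollary~\ref{cor:Euler_sandpile_acts_on_tours} and the well-definedness in Definition~\ref{def:canonical_action_on_tours} from Theorem~\ref{thm:Eulerian_tour_action_well_defined} together with the facts already established about the rotor-router action. First I would fix a reference edge $\overrightarrow{vw}$ and observe that, via the bijection $A_{\overrightarrow{vw}}$ of Definition~\ref{def:Euler_tour_arb_bijection}, the rule $r(x,\mathcal{E}) := A_{\overrightarrow{vw}}^{-1}\bigl(r_v(x, A_{\overrightarrow{vw}}(\mathcal{E}))\bigr)$ manifestly defines \emph{a} simply transitive action of $S(D)$ on compatible Eulerian tours: it is an action because $r_v$ is a group action on $\Arb(D,v)$ and $A_{\overrightarrow{vw}}$ is a bijection (conjugating a group action by a bijection yields a group action), and it is simply transitive because $r_v$ is simply transitive on $\Arb(D,v)$ (stated after Definition~\ref{def::rr_action}) and bijections preserve simple transitivity. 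So the only content beyond this transport-of-structure is that the resulting action does not depend on the choices of root $v$ and initial edge $\overrightarrow{vw}$.

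Next I would use Theorem~\ref{thm:Eulerian_tour_action_well_defined} to eliminate the dependence on the root and on the chosen edge. Concretely, suppose $\overrightarrow{uv}$ and $\overrightarrow{wz}$ are two choices of reference edge (with possibly distinct tails $u\neq w$), let $x\in S(D)$, and let $\mathcal{E}$ be a compatible Eulerian tour. Let $\mathcal{E}'$ be the tour produced by the first choice, i.e.\ $r_u(x, A_{\overrightarrow{uv}}(\mathcal{E})) = A_{\overrightarrow{uv}}(\mathcal{E}')$. Theorem~\ref{thm:Eulerian_tour_action_well_defined} applies verbatim (its hypothesis is exactly this equation, for arbitrary edges $\overrightarrow{uv}$ and $\overrightarrow{wz}$) and yields $r_w(x, A_{\overrightarrow{wz}}(\mathcal{E})) = A_{\overrightarrow{wz}}(\mathcal{E}')$, i.e.\ the second choice produces the same $\mathcal{E}'$. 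Hence the tour $\mathcal{E}'$ described in Definition~\ref{def:canonical_action_on_tours} exists and is well-defined (independent of the reference edge), and consequently $r(x,\mathcal{E})$ is unambiguously defined. This simultaneously shows the action is canonical in the stated sense.

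Finally I would assemble the pieces: fixing any one reference edge $\overrightarrow{vw}$, the map $(x,\mathcal{E})\mapsto r(x,\mathcal{E})$ coincides with the transported action $A_{\overrightarrow{vw}}^{-1}\circ r_v(x,\cdot)\circ A_{\overrightarrow{vw}}$, which by the first paragraph is a simply transitive group action of $S(D)$; and by the second paragraph this map is independent of the chosen edge, hence genuinely canonical. This establishes every clause of Corollary~\ref{cor:Euler_sandpile_acts_on_tours}. There is essentially no obstacle here once Theorem~\ref{thm:Eulerian_tour_action_well_defined} is in hand: the corollary is a formal consequence, the only mild subtlety being to note that $A_{\overrightarrow{uv}}$ and $A_{\overrightarrow{wz}}$ for \emph{different} tails $u,w$ both land in (different) arborescence sets but Theorem~\ref{thm:Eulerian_tour_action_well_defined} is precisely the compatibility statement bridging them, so no separate argument comparing $\Arb(D,u)$ and $\Arb(D,w)$ is needed. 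The real work — establishing Theorem~\ref{thm:Eulerian_tour_action_well_defined} itself via the chip-and-rotor linear equivalence of Proposition~\ref{prop:alt_def_rotor_action} — happens before this corollary, not within it.
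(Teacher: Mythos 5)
Your proposal is correct and matches the paper's intent: the corollary is stated there without a separate proof precisely because it is the formal transport-of-structure consequence of Theorem~\ref{thm:Eulerian_tour_action_well_defined} together with the known simple transitivity of $r_v$, which is exactly the argument you give. No issues.
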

\begin{ex}\label{ex:tour_rotor_action}
	Figure \ref{f:tour_rotor_action} shows a plane embedded Eulerian digraph. The left panel shows a chip configuration (white numbers on the vertices) and a compatible Eulerian tour (red curve). The tour-rotor action of the chip configuration on the tour gives the compatible Eulerian tour on the right panel (red curve).
\end{ex}

Before proving Theorem \ref{thm:Eulerian_tour_action_well_defined}, let us pose an open problem. One feels that this canonical action needs to have a nice, canonical definition, that does not need fixing an arbitrary edge. However, we were so far not able to come up with such a definition.

\begin{prob}\label{prob:canonical_def_for_tour_rotor_action}
	Give a canonical definition for the action defined in Definition \ref{def:canonical_action_on_tours}.
\end{prob}

The following lemma will be the key to proving Theorem \ref{thm:Eulerian_tour_action_well_defined}.

\begin{lemma}\label{lem:change_of_arborescences_when_changing_first_edge}
	Let $D$ be an Eulerian ribbon digraph, $\mathcal{E}$ a compatible Eulerian tour of $D$, and $\overrightarrow{uv}_e$ and $\overrightarrow{wz}_f$ two arbitrary edges of $D$. (Vertices in $\{u,v,w,z\}$ might coincide.) Then
	$$(\mathbf{1}_v-\mathbf{1}_{z}, A_{\overrightarrow{uv}_e}(\cE)\cup \overrightarrow{uv}_e) \sim (\mathbf{0}, A_{\overrightarrow{wz}_f}(\cE)\cup \overrightarrow{wz}_f).$$
\end{lemma}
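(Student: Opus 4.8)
\textbf{Proof plan for Lemma \ref{lem:change_of_arborescences_when_changing_first_edge}.}

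The plan is to reduce the general statement to the case of two "adjacent" edges in the Eulerian tour, namely edges $\overrightarrow{uv}$ and $\overrightarrow{wz}$ that are consecutive in $\cE$ (so $v=w$), and then to chain such steps together. Fix an orientation of $\cE$ as a cyclic sequence of edges $f_1, f_2, \dots, f_m$ (indices mod $m$). For $1 \le i \le m$ write $A_i := A_{f_i}(\cE)$ for the in-arborescence obtained by declaring $f_i$ to be the first edge; note that $A_i$ is rooted at the tail of $f_i$. The key structural observation I would establish first is a \emph{local} comparison: if $f_i = \overrightarrow{ab}$ and $f_{i+1} = \overrightarrow{bc}$ are consecutive edges of $\cE$, then $A_i$ and $A_{i+1}$ differ in a controlled way. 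Concretely, when we slide the cut point of the tour from "before $f_i$" to "before $f_{i+1}$", the only vertex whose "last out-edge used" changes is $b$: for vertex $b$, the last out-edge in the $f_i$-rooted tour is the out-edge of $b$ that is traversed just before we return to close the cycle at the tail of $f_i$; shifting the start past $f_{i+1}=\overrightarrow{bc}$ makes $f_{i+1}$ no longer "first" and instead makes the previously-last out-edge of $a$ relevant, etc. I expect the clean statement to be: $A_{i+1} \cup \overrightarrow{f_{i+1}} = (A_i \cup \overrightarrow{f_i})$ with the rotor at $a$ (the tail of $f_i$) advanced one step and the rotor at $b$ adjusted — in any case, that $A_i \cup f_i$ and $A_{i+1} \cup f_{i+1}$, viewed as rotor configurations, are related by a single unconstrained routing or a short explicit sequence of routings. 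Working this out carefully from Definition \ref{def:Euler_tour_arb_bijection} is the step I expect to be the main obstacle, since it requires tracking precisely which out-edge at each vertex is "last used" and how that changes under a cyclic shift of the starting edge; the compatibility of $\cE$ with the ribbon structure is exactly what makes the rotor bookkeeping consistent (the out-edges at each vertex are traversed in the ribbon cyclic order, so advancing a rotor corresponds to moving to the next traversal of that vertex's out-edges).

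Once the one-step (adjacent-edge) version is in hand, I would prove the general statement by telescoping. Write $\overrightarrow{uv} = f_i$ and $\overrightarrow{wz} = f_j$ for the appropriate indices $i, j$ in the cyclic sequence of $\cE$ (if the given edges are not literally edges appearing as some $f_k$ — but they are, since $\cE$ uses every edge exactly once). Going around the tour from $f_i$ to $f_j$, each adjacent step $f_k \to f_{k+1}$ contributes, by the local lemma, a relation of the form $(\mathbf{1}_{\text{head}(f_k)} - \mathbf{1}_{\text{head}(f_{k+1})}, A_k \cup f_k) \sim (\mathbf{0}, A_{k+1} \cup f_{k+1})$ — i.e. the chip vector changes by moving one chip from the head of $f_k$ to the head of $f_{k+1}$, which is precisely what one unconstrained routing at $\text{head}(f_k) = \text{tail}(f_{k+1})$ does. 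Summing these telescoping contributions from $k = i$ to $k = j-1$, all intermediate chip terms cancel and we are left with $(\mathbf{1}_{\text{head}(f_i)} - \mathbf{1}_{\text{head}(f_j)}, A_i \cup f_i) \sim (\mathbf{0}, A_j \cup f_j)$, which is exactly $(\mathbf{1}_v - \mathbf{1}_z, A_{\overrightarrow{uv}}(\cE) \cup \overrightarrow{uv}) \sim (\mathbf{0}, A_{\overrightarrow{wz}}(\cE) \cup \overrightarrow{wz})$ since $v = \text{head}(\overrightarrow{uv})$ and $z = \text{head}(\overrightarrow{wz})$. (Transitivity of $\sim$ on the strongly connected pieces — or more simply, concatenation of routing sequences — justifies chaining the steps; note we never need symmetry here, only that a sequence of routings followed by another sequence of routings is a sequence of routings.)

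The fallback, in case the clean one-step identity is messier than hoped, is to not insist on the minimal local move but rather to directly exhibit, for consecutive $f_k = \overrightarrow{ab}$, $f_{k+1} = \overrightarrow{bc}$, the explicit routing: starting from $(\mathbf{1}_b - \mathbf{1}_c, A_k \cup f_k)$ we route once at $b$. Since $\varrho(b)$ in the configuration $A_k \cup f_k$ is the last out-edge of $b$ used by the $f_k$-rooted tour, and $\nextout(b, \varrho(b))$ is — by compatibility — precisely the out-edge of $b$ that becomes "last used" once we reclassify the tour as $f_{k+1}$-rooted, the rotor at $b$ updates exactly to its value in $A_{k+1} \cup f_{k+1}$, and the chip moves from $b$ to $c = \text{head}(f_{k+1})$. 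One must double-check the degenerate cases where some of $a, b, c$ coincide or where $f_k$ or $f_{k+1}$ is a loop; these should be handled uniformly by the same rotor bookkeeping but deserve an explicit remark. I would also remark that the case $\overrightarrow{uv} = \overrightarrow{wz}$ is trivial (both sides equal) and consistent with the telescoping formula ($\mathbf{1}_v - \mathbf{1}_z = \mathbf{0}$).
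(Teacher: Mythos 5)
Your proposal is correct and follows essentially the same route as the paper: reduce to consecutive edges of $\cE$, observe that the rotor configurations $A_{\overrightarrow{uv}}(\cE)\cup\overrightarrow{uv}$ and $A_{\overrightarrow{vz}}(\cE)\cup\overrightarrow{vz}$ differ only at the intermediate vertex (where compatibility gives $\prevout(v,\overrightarrow{vz})$ versus $\overrightarrow{vz}$, i.e.\ one unconstrained routing at $v$ moving a chip from $v$ to $z$), and then telescope around the tour. Your first-paragraph guess that the rotor at the tail $a$ also advances is not right (it stays at $\overrightarrow{ab}$, which is simultaneously the adjoined root edge in one configuration and the genuinely last out-edge of $a$ in the other), but your ``fallback'' argument of routing once at $b$ is exactly the paper's proof.
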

\begin{proof} 
	First, we prove the statement of the Lemma if $\overrightarrow{uv}_e$ and $\overrightarrow{wz}_f$ are two consecutive edges in the Eulerian tour $\mathcal{E}$. That is, we take $\overrightarrow{uv}_e$ and $\overrightarrow{vz}_f$ such that $\overrightarrow{vz}_f$ is traversed right after $\overrightarrow{uv}_e$ in $\mathcal{E}$.
	
	When defining $A_{\overrightarrow{uv}_e}(\cE)$, we take $\overrightarrow{uv}_e$ as the first edge of $\cE$, and when defining $A_{\overrightarrow{vz}_f}(\cE)$, we take $\overrightarrow{vz}_f$ as the first edge, which means that in this case $\overrightarrow{uv}_e$ is the last edge.
	
	For any vertex different from $u$ and $v$, no matter whether we took $\overrightarrow{uv}_e$ or $\overrightarrow{vz}_f$ as the first edge, the first edge through which we reach the vertex does not change. Hence the last out-edge is the same for $w\in V-\{u,v\}$ in the two cases.
	
	For vertex $u$, the subgraph $A_{\overrightarrow{uv}_e}(\mathcal{E}) \cup \overrightarrow{uv}_e$ has out-edge $\overrightarrow{uv}_e$ by definition. Also, with $\overrightarrow{vz}_f$ as first edge, $\overrightarrow{uv}_e$ is the very last edge, hence it is also the last out-edge at $u$. Thus, $A_{\overrightarrow{vz}_f}(\cE) \cup \overrightarrow{vz}_f$ also has $\overrightarrow{uv}_e$ as the out-edge of $u$.
	
	For vertex $v$, $A_{\overrightarrow{vz}_f}(\cE) \cup \overrightarrow{vz}_f$ has out-edge $\overrightarrow{vz}_f$ by definition. On the other hand, with $\overrightarrow{uv}_e$ as first edge, $\overrightarrow{vz}_f$ is the first out-edge of $v$, hence $A_{\overrightarrow{uv}_e}(\cE) \cup \overrightarrow{uv}_e$ has $\prevout(v,\overrightarrow{vz}_f)$ as the out-edge of $v$.
	
	Altogether, the two rotor-configurations $A_{\overrightarrow{uv}_e}(\cE)$ and $A_{\overrightarrow{vz}_f}(\cE)$ agree at each vertex except for $v$, where the first one has out-edge $\prevout(v,\overrightarrow{vz}_f)$ and the latter one has $\overrightarrow{vz}_f$.
	For the chip-and-rotor configuration $(\mathbf{1}_v-\mathbf{1}_z, A_{\overrightarrow{uv}_e}(\mathcal{E}) \cup \overrightarrow{uv}_e)$, if we perform one routing step at $v$, we get exactly $(\mathbf{0}, A_{\overrightarrow{vz}_f}(\mathcal{E}) \cup \overrightarrow{vz}_f)$, hence the statement of the lemma is indeed true in the case if $\overrightarrow{uv}_e$ and $\overrightarrow{vz}_f$ are two consecutive edges of the Eulerian tour $\cE$.
	
	Now if $\overrightarrow{uv}_e$ and $\overrightarrow{wz}_f$ are arbitrary edges, take the sequence of edges between $\overrightarrow{uv}_e$ and $\overrightarrow{wz}_f$ on the tour $\cE$. That is, let $\overrightarrow{u_0u_1}_{e_1}=\overrightarrow{uv}_e, \overrightarrow{u_1u_2}_{e_2}, \dots, \overrightarrow{u_{k-1}u_k}_{e_{k}}=\overrightarrow{wz}_f$ be the sequence of edges of $\cE$ between $\overrightarrow{uv}_e$ and $\overrightarrow{wz}_f$.
	Applying the consecutive case successively, (and using the trivial fact that $(\mathbf{x},\varrho) \sim (\mathbf{x}', \varrho')$ implies $(\mathbf{x}+\mathbf{y},\varrho)\sim(\mathbf{x}'+\mathbf{y}, \varrho')$ for any $\mathbf{y}\in S(D)$), we get
	\begin{align*}
		((\mathbf{1}_{u_1}-\mathbf{1}_{u_2})+(\mathbf{1}_{u_2}-\mathbf{1}_{u_3})+\dots + (\mathbf{1}_{u_{k-1}}-\mathbf{1}_{u_{k}}), A_{\overrightarrow{u_0u_1}_{e_1}}(\cE)\cup \overrightarrow{u_0u_1}_{e_1})\\ \sim (\mathbf{0}, A_{\overrightarrow{u_{k-1}u_k}_{e_{k}}}(\cE)\cup \overrightarrow{u_{k-1}u_k}_{e_{k}}).
	\end{align*}
	After cancellation and substituting $u_1=v$ and $u_k=z$, we get
	$$(\mathbf{1}_v-\mathbf{1}_{z}, A_{\overrightarrow{uv}_e}(\cE)\cup \overrightarrow{uv}_e) \sim (\mathbf{0}, A_{\overrightarrow{wz}_f}(\cE)\cup \overrightarrow{wz}_f).$$
\end{proof}

\begin{figure}
	\begin{center}
		\begin{tikzpicture}[-,>=stealth',auto,scale=0.8,ultra thick]
			\tikzstyle{v}=[circle,scale=0.5,fill,draw]
			\begin{scope}[shift={(-4,0)}]
				\node[v] (1) at (0, 0) {\color{white}\Large -1};
				\node[v] (2) at (4, 0) {\color{white}\Large 0};
				\node[v] (3) at (2, 3.4) {\color{white}\Large -1};
				\node[v] (4) at (2, 1.2) {\color{white}\Large 2};
				\draw[color=red,rounded corners=10pt,line width=3pt] (4) -- (2.2, 2.3) -- (2, 3.1) -- (1.7, 2.3) -- (1.9, 1.3) -- (0.8, 0.8) -- (0.2, 0.4) -- (0.6, 1.8) -- (3) -- (3.4, 1.8) -- (3.8, 0.4) -- (3.2, 0.8) -- (2.2, 1.1)  -- (2.8, 0.4)-- (3.7, 0) -- (2, -0.4) -- (0.3, 0) -- (1.2, 0.4) -- (4);
				\draw[->,rounded corners=15pt] (1) -- (1.2, 0.4) -- (4);
				\draw[->,rounded corners=15pt] (4) -- (0.8, 0.8) -- (1);
				\draw[->,rounded corners=15pt] (4) -- (2.8, 0.4) -- (2);
				\draw[->,rounded corners=15pt] (2) -- (3.2, 0.8) -- (4);
				\draw[->,rounded corners=15pt] (3) -- (1.7, 2.3) -- (4);
				\draw[->,rounded corners=15pt] (4) -- (2.2, 2.3) -- (3);
				\draw[->,rounded corners=20pt] (3) -- (3.4, 1.8) -- (2);
				\draw[->,rounded corners=20pt] (2) -- (2, -0.4) -- (1);
				\draw[->,rounded corners=20pt] (1) -- (0.6, 1.8) -- (3);
			\end{scope}
			\begin{scope}[shift={(3,0)}]
				\node[v] (1) at (0, 0) {\color{white} };
				\node[v] (2) at (4, 0) {\color{white} };
				\node[v] (3) at (2, 3.4) {\color{white} };
				\node[v] (4) at (2, 1.2) {\color{white} };
				\draw[color=red,rounded corners=10pt,line width=3pt] (2.15, 2) -- (2.2, 2.3) -- (2, 3.1) -- (1.7, 2.3) -- (1.9, 1.3) -- (0.8, 0.8) -- (0.2, 0.4) -- (0.6, 1.8) -- (3) -- (3.4, 1.8) -- (2) -- (2, -0.4) -- (0.3, 0) -- (1.2, 0.4) -- (2, 1) -- (2.8, 0.4) -- (3.8, 0.2) -- (3.2, 0.8) -- (2.1, 1.4) -- (2.15, 2);
				\draw[->,rounded corners=15pt] (1) -- (1.2, 0.4) -- (4);
				\draw[->,rounded corners=15pt] (4) -- (0.8, 0.8) -- (1);
				\draw[->,rounded corners=15pt] (4) -- (2.8, 0.4) -- (2);
				\draw[->,rounded corners=15pt] (2) -- (3.2, 0.8) -- (4);
				\draw[->,rounded corners=15pt] (3) -- (1.7, 2.3) -- (4);
				\draw[->,rounded corners=15pt] (4) -- (2.2, 2.3) -- (3);
				\draw[->,rounded corners=20pt] (3) -- (3.4, 1.8) -- (2);
				\draw[->,rounded corners=20pt] (2) -- (2, -0.4) -- (1);
				\draw[->,rounded corners=20pt] (1) -- (0.6, 1.8) -- (3);
				\draw[->] (-2, 1.5) -- (-1, 1.5);
			\end{scope}
		\end{tikzpicture}
	\end{center}
	\caption{An example for the tour-rotor action. See Example \ref{ex:tour_rotor_action} for the details. The ribbon structure is given by the plane embedding. \label{f:tour_rotor_action}}
\end{figure}
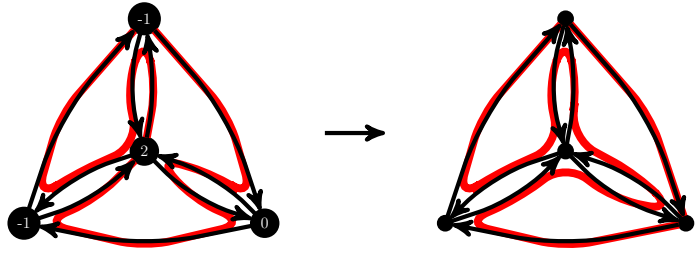

\begin{proof}[Proof of Theorem \ref{thm:Eulerian_tour_action_well_defined}]
	By Proposition \ref{prop:alt_def_rotor_action}, 
	$r_u(\mathbf{x}, A_{\overrightarrow{uv}_e}(\cE))=A_{\overrightarrow{uv}_e}(\cE')$ implies $$(\mathbf{x}, A_{\overrightarrow{uv}_e}(\mathcal{E})\cup \overrightarrow{uv}_e)\sim (\mathbf{0}, A_{\overrightarrow{uv}_e}(\cE')\cup \overrightarrow{uv}_e).$$
	
	Lemma \ref{lem:change_of_arborescences_when_changing_first_edge} implies $(\mathbf{1}_v-\mathbf{1}_z, A_{\overrightarrow{uv}_e}(\mathcal{E})\cup \overrightarrow{uv}_e)\sim (\mathbf{0}, A_{\overrightarrow{wz}_f}(\cE)\cup \overrightarrow{wz}_f).$
	
	Similarly, $(\mathbf{1}_v-\mathbf{1}_z, A_{\overrightarrow{uv}_e}(\mathcal{E}')\cup \overrightarrow{uv}_e)\sim (\mathbf{0}, A_{\overrightarrow{wz}_f}(\cE')\cup \overrightarrow{wz}_f).$
	
	Hence, 
	\begin{align*}
		(\mathbf{x}, A_{\overrightarrow{wz}_f}(\mathcal{E})\cup \overrightarrow{wz}_f) \sim (\mathbf{x}+\mathbf{1}_v-\mathbf{1}_z, A_{\overrightarrow{uv}_e}(\mathcal{E})\cup \overrightarrow{uv}_e) \\  \sim (\mathbf{1}_v-\mathbf{1}_z, A_{\overrightarrow{uv}_e}(\mathcal{E}')\cup \overrightarrow{uv}_e)  \sim (\mathbf{0}, A_{\overrightarrow{wz}_f}(\cE')\cup \overrightarrow{wz}_f).
	\end{align*}
	
	By Proposition \ref{prop:alt_def_rotor_action}, this implies $r_w(\mathbf{x}, A_{\overrightarrow{wz}_f}(\cE))=A_{\overrightarrow{wz}_f}(\cE')$.
\end{proof}

\section{Special case: Rotor routing in undirected ribbon graphs}
\label{s:action_of_bidirected_graphs}

Let us see what the results of Section \ref{sec:tour-rotor_action} imply for the rotor-routing action of undirected ribbon graphs.

Let $G$ be an undirected ribbon graph, and let $G^{\leftrightarrows}$ be the associated normal bidirected ribbon graph. Recall that $G$ is planar if and only if $G^{\leftrightarrows}$ is.

By Corollary \ref{cor:Euler_sandpile_acts_on_tours}, the tour-rotor action of $S(G^{\leftrightarrows})$ is a canonical action of $S(G)$ on the compatible Eulerian tours of $G^{\leftrightarrows}$, which is a set of objects equinumerous with spanning trees.
Moreover, via the bijection $\cE \mapsto A_{\overrightarrow{vw}_e}(\cE)$, this action descends to the rotor-routing action with root $v$.

\subsection{Basepoint independence revisited} \label{ssec:bidirected_case}

The results of Section \ref{sec:tour-rotor_action} enable us to phrase an alternative condition for the basepoint-independence of the rotor-routing action using the tour-rotor action. 
Recall that for a spanning tree $T$, we denote the corresponding in-arborescence of $G^{\leftrightarrows}$ rooted at $v$ by $T^v$.

\begin{prop}\label{prop:root_independence_consition_using_tour_action}
	Let $G$ be a ribbon graph, and $u,w\in V(G)$. The followings are equivalent:
	\begin{itemize}
		\item[$(i)$] The rotor-routing actions of $G$ (Definition \ref{def:rr_action_undirected}) with root $u$ and with root $w$ agree.
		\item[$(ii)$] For each pair of arcs $\overrightarrow{uv}_e, \overrightarrow{wz}_f \in E(G^{\leftrightarrows})$, there exists some $\mathbf{y}\in S(G)$ such that for each spanning tree $T$ of $G$, we have $$r(\mathbf{y},A^{-1}_{\overrightarrow{uv}_e}(T^u))=A^{-1}_{\overrightarrow{wz}_f}(T^w).$$
		\item[$(iii)$] There exist arcs $\overrightarrow{uv}_e, \overrightarrow{wz}_f \in E(G^{\leftrightarrows})$, and $\mathbf{y}\in S(G)$ such that for each spanning tree $T$ of $G$, we have 
		$$r(\mathbf{y},A^{-1}_{\overrightarrow{uv}_e}(T^u))=A^{-1}_{\overrightarrow{wz}_f}(T^w).$$
	\end{itemize}
\end{prop}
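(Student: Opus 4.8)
The plan is to translate conditions $(i)$–$(iii)$ into statements about the tour-rotor action $r$ of $S(G)=S(G^{\leftrightarrows})$ via the BEST bijections, and then exploit that $r$ is a simply transitive action of the \emph{Abelian} group $S(G)$ (Corollary \ref{cor:Euler_sandpile_acts_on_tours}). For an arc $\overrightarrow{uv}\in E(G^{\leftrightarrows})$ with tail $u$, write $\phi_{\overrightarrow{uv}}$ for the bijection $T\mapsto A^{-1}_{\overrightarrow{uv}}(T^u)$ from spanning trees of $G$ to compatible Eulerian tours of $G^{\leftrightarrows}$. The one substantive step is the intertwining identity
\begin{equation*}
\phi_{\overrightarrow{uv}}\big(r_u(x,T)\big)=r\big(x,\phi_{\overrightarrow{uv}}(T)\big)
\end{equation*}
for all $x\in S(G)$ and all spanning trees $T$ of $G$ (and likewise with $(u,\overrightarrow{uv})$ replaced by $(w,\overrightarrow{wz})$). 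This follows by unwinding definitions: by Definition \ref{def:rr_action_undirected} we have $\big(r_u(x,T)\big)^u=r_u(x,T^u)$, and by Definition \ref{def:canonical_action_on_tours} applied with the edge $\overrightarrow{uv}$ we have $r_u\big(x,A_{\overrightarrow{uv}}(\mathcal{E})\big)=A_{\overrightarrow{uv}}\big(r(x,\mathcal{E})\big)$; setting $\mathcal{E}=\phi_{\overrightarrow{uv}}(T)$ (so $A_{\overrightarrow{uv}}(\mathcal{E})=T^u$) and applying $A^{-1}_{\overrightarrow{uv}}$ yields the claim.

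With this in hand, $(ii)\Rightarrow(iii)$ is immediate. For $(iii)\Rightarrow(i)$, suppose $\overrightarrow{uv},\overrightarrow{wz}$ and $y\in S(G)$ satisfy $r\big(y,\phi_{\overrightarrow{uv}}(S)\big)=\phi_{\overrightarrow{wz}}(S)$ for every spanning tree $S$. Fix $x\in S(G)$ and a spanning tree $T$. Using the intertwining identities, the hypothesis (for $S=T$ and for $S=r_u(x,T)$), and the fact that $r$ is an action of the Abelian group $S(G)$ (so $r(x,r(y,\cdot))=r(x+y,\cdot)=r(y,r(x,\cdot))$), one computes
\begin{align*}
\phi_{\overrightarrow{wz}}\big(r_w(x,T)\big)
&=r\big(x,\phi_{\overrightarrow{wz}}(T)\big)
=r\big(x,r(y,\phi_{\overrightarrow{uv}}(T))\big)\\
&=r\big(y,r(x,\phi_{\overrightarrow{uv}}(T))\big)
=r\big(y,\phi_{\overrightarrow{uv}}(r_u(x,T))\big)
=\phi_{\overrightarrow{wz}}\big(r_u(x,T)\big).
\end{align*}
Since $\phi_{\overrightarrow{wz}}$ is a bijection, $r_w(x,T)=r_u(x,T)$, and as $x,T$ were arbitrary this is $(i)$.

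For $(i)\Rightarrow(ii)$, fix arbitrary arcs $\overrightarrow{uv},\overrightarrow{wz}\in E(G^{\leftrightarrows})$ and some spanning tree $T_0$. By simple transitivity of the tour-rotor action there is $y\in S(G)$ with $r\big(y,\phi_{\overrightarrow{uv}}(T_0)\big)=\phi_{\overrightarrow{wz}}(T_0)$; the claim is that this single $y$ works for every spanning tree $T$. Given $T$, simple transitivity of the rotor-routing action provides $x$ with $r_u(x,T_0)=T$, so $\phi_{\overrightarrow{uv}}(T)=r\big(x,\phi_{\overrightarrow{uv}}(T_0)\big)$ by the intertwining identity, and hence
\begin{align*}
r\big(y,\phi_{\overrightarrow{uv}}(T)\big)
&=r\big(y,r(x,\phi_{\overrightarrow{uv}}(T_0))\big)
=r\big(x,r(y,\phi_{\overrightarrow{uv}}(T_0))\big)
=r\big(x,\phi_{\overrightarrow{wz}}(T_0)\big)\\
&=\phi_{\overrightarrow{wz}}\big(r_w(x,T_0)\big)
=\phi_{\overrightarrow{wz}}\big(r_u(x,T_0)\big)
=\phi_{\overrightarrow{wz}}(T),
\end{align*}
where the last line uses $(i)$ and the choice of $x$. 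Thus $(ii)$ holds.

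I expect the only real content to be the intertwining identity of the first paragraph; everything afterwards is formal bookkeeping with the group action, the one essential feature being commutativity of $S(G)$ (which lets $r(x,\cdot)$ and $r(y,\cdot)$ commute). The mild care required is in chasing the three nested bijections (spanning trees $\leftrightarrow$ in-arborescences of $G^{\leftrightarrows}$ $\leftrightarrow$ compatible Eulerian tours) and in observing, in $(i)\Rightarrow(ii)$, that the element $y$ produced from a single tree $T_0$ automatically works for all trees — which is exactly what simple transitivity of the tour-rotor action, combined with the intertwining identity, delivers.
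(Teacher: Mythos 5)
Your proof is correct. The overall strategy coincides with the paper's (fix one spanning tree, use simple transitivity of the tour-rotor action to produce $y$, then propagate to all trees via the action of an element $x$ carrying one tree to another), but the execution is genuinely different in an instructive way: you package everything into the intertwining identity $\phi_{\overrightarrow{uv}}(r_u(x,T))=r(x,\phi_{\overrightarrow{uv}}(T))$ --- which is indeed immediate from Definition \ref{def:canonical_action_on_tours} together with Theorem \ref{thm:Eulerian_tour_action_well_defined} --- and then argue purely formally with the simply transitive action of the Abelian group $S(G)$, so that the commutativity step $r(x,r(y,\cdot))=r(y,r(x,\cdot))$ does all the work. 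The paper instead unwinds the same computation at the level of chip-and-rotor configurations, re-invoking Proposition \ref{prop:alt_def_rotor_action} and Lemma \ref{lem:change_of_arborescences_when_changing_first_edge} directly and tracking the correction terms $\mathbf{1}_v-\mathbf{1}_z$ by hand. Your version is shorter and makes transparent that the proposition is a formal consequence of the tour-rotor action being a well-defined simply transitive action of an Abelian group intertwining the rooted actions; the paper's version is more self-contained at the configuration level and exhibits explicitly which linear equivalences are being used (which is then reused in the subsequent proof of root-independence for plane graphs). Both are valid; no gap.
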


\begin{proof}
	$(i) \Rightarrow (ii)$: Take arbitrary edges $\overrightarrow{uv}_e$ and $\overrightarrow{wz}_f$. Let $T_1$ be an arbitrary fixed spanning tree, and 
	take $\mathbf{y}\in S(G)$ such that $$r(\mathbf{y},A^{-1}_{\overrightarrow{uv}_e}(T_1^u))=A^{-1}_{\overrightarrow{wz}_f}(T_1^w).$$ 
	Such a $\mathbf{y}$ uniquely exists because the tour-rotor action is simply transitive.
	
	Let $T_2$ be another arbitrary spanning tree of $G$, and suppose that $r_u(\mathbf{x}, T_1^u)=T_2^u$.  
	The rotor-routing actions with roots $u$ and $w$ agree if and only if $r_w(\mathbf{x}, T_1^w)=T_2^w$. 
	
	By definition, $r(\mathbf{y},A^{-1}_{\overrightarrow{uv}_e}(T_1^u))=A^{-1}_{\overrightarrow{wz}_f}(T_1^w)$ means
	$$r_u(\mathbf{y},A_{\overrightarrow{uv}_e}(A^{-1}_{\overrightarrow{uv}_e}(T_1^u)))=A_{\overrightarrow{uv}_e}(A^{-1}_{\overrightarrow{wz}_f}(T_1^w)),$$ or in a more useful form, 
	$$(\mathbf{y},T_1^u\cup \overrightarrow{uv}_e)\sim (\mathbf{0}, A_{\overrightarrow{uv}_e}(A^{-1}_{\overrightarrow{wz}_f}(T_1^w))\cup \overrightarrow{uv}_e).$$ 
	This implies 
	$$(\mathbf{x}+\mathbf{y}+\mathbf{1}_v-\mathbf{1}_z,T_1^u\cup \overrightarrow{uv}_e)\sim (\mathbf{x}+\mathbf{1}_v-\mathbf{1}_z, A_{\overrightarrow{uv}_e}(A^{-1}_{\overrightarrow{wz}_f}(T_1^w))\cup \overrightarrow{uv}_e).$$ 
	The assumption $r_u(\mathbf{x}, T_1^u)=T_2^u$ implies $$(\mathbf{x}+\mathbf{y}+ \mathbf{1}_v-\mathbf{1}_z, T_1^u \cup \overrightarrow{uv}_e)\sim (\mathbf{y}+ \mathbf{1}_v-\mathbf{1}_z, T_2^u \cup \overrightarrow{uv}_e).$$
	Furthermore, by Lemma \ref{lem:change_of_arborescences_when_changing_first_edge}, we have $$(\mathbf{y}+ \mathbf{1}_v-\mathbf{1}_z, T_2^u \cup \overrightarrow{uv}_e) = (\mathbf{y}+ \mathbf{1}_v-\mathbf{1}_z, A_{\overrightarrow{uv}_e}(A^{-1}_{\overrightarrow{uv}_e}(T_2^u)) \cup \overrightarrow{uv}_e) \sim (\mathbf{y}, A_{\overrightarrow{wz}_f}(A^{-1}_{\overrightarrow{uv}_e}(T_2^u)) \cup \overrightarrow{wz}_f).$$
	Using Lemma \ref{lem:change_of_arborescences_when_changing_first_edge}, we further have $$(\mathbf{x}+\mathbf{1}_v-\mathbf{1}_z, A_{\overrightarrow{uv}_e}(A^{-1}_{\overrightarrow{wz}_f}(T_1^w))\cup \overrightarrow{uv}_e)\sim (\mathbf{x}, A_{\overrightarrow{wz}_f}(A^{-1}_{\overrightarrow{wz}_f}(T_1^w))\cup \overrightarrow{wz}_f) = (\mathbf{x}, T_1^w\cup \overrightarrow{wz}_f).$$ 
	Finally, $r_w(\mathbf{x},T^w_1)=T_2^w$ implies \begin{equation}\label{eq:root_at_w}
		(\mathbf{x}, T_1^w\cup \overrightarrow{wz}_f) \sim (\mathbf{0}, T_2^w\cup \overrightarrow{wz}_f).
	\end{equation} 
	
	Putting the parts together, we have $$(\mathbf{y}, A_{\overrightarrow{wz}_f}(A^{-1}_{\overrightarrow{uv}_e}(T_2^u)) \cup \overrightarrow{wz}_f)
	\sim (\mathbf{0}, T_2^w \cup \overrightarrow{wz}_f) = (\mathbf{0}, A_{\overrightarrow{wz}_f}(A^{-1}_{\overrightarrow{wz}_f}(T_2^w)))\cup \overrightarrow{wz}_f),$$ 
	hence $r(\mathbf{y},A^{-1}_{\overrightarrow{uv}_e}(T_2^u))=A^{-1}_{\overrightarrow{wz}_f}(T_2^w)$.
	
	$(ii)\Rightarrow (iii)$: Immediate.
	
	$(iii) \Rightarrow (i)$: Note that \eqref{eq:root_at_w} also implies $r_w(\mathbf{x},T_1^w)=T_2^w$. Hence if $(iii)$ is satisfied, we can do essentially the same computation as in the $(i)\Rightarrow (ii)$ case and obtain that the rotor-routing actions with root $u$ and $w$ agree.
\end{proof}

By the the result of Chan, Church and Grochow \cite[Theorem 2]{Chan15}, the condition of Proposition \ref{prop:root_independence_consition_using_tour_action} is satisfied if and only if $G$ is embedded into the plane.
We give yet another proof for \cite[Theorem 2]{Chan15}. Even though the main ideas behind this new proof are basically the same as in \cite{Chan15}, we feel that technically this proof is easier, and it gives some structural insight (in particular, on the values of $y$) which might be interesting.

As a lemma, we use a statement similar to \cite[Proposition 9]{Chan15}. A rotor configuration $\varrho$ is called a \emph{unicycle} if there is a unique cycle in $\{\varrho(v) : v\in V\}$.

\begin{lemma}\label{lem:unicycle_reversible}
	Let $G$ be a ribbon graph without loops. Let $\varrho$ be a unicycle of $G^{\leftrightarrows}$, and let $\overleftarrow{\varrho}$ be the unicycle obtained by reversing the cycle in $\varrho$. Then, $(\mathbf{0},\varrho) \sim (\mathbf{0}, \overleftarrow{\varrho})$ if and only if the unique cycle in $\varrho$ is separating.
\end{lemma}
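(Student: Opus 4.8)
The plan is to translate the unicycle reversal into a statement about chip-and-rotor linear equivalence that can be analyzed via Proposition~\ref{prop:zero_lin_ekv_cond}. Write $C$ for the unique cycle in $\varrho$, regarded as a directed cycle in $G^{\leftrightarrows}$, and let $\overleftarrow{C}$ be the reversed directed cycle (the same underlying edges of $G$, traversed the other way). First I would observe that outside the cycle, $\varrho$ and $\overleftarrow{\varrho}$ agree, so by Fact~\ref{fact:chip_lin_ekv_and_rotor_lin_ekv}-type reasoning the question $(\mathbf{0},\varrho)\sim(\mathbf{0},\overleftarrow{\varrho})$ is really about what happens when we route around the cycle. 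The natural move is: starting from $(\mathbf{0},\varrho)$, route every vertex of the cycle exactly once in cyclic order; since each vertex on the cycle currently points along $C$, after one routing pass the rotors on $C$ all advance by one out-edge, and a chip travels around — the net effect on chips is $\mathbf{0}$ but the rotors have changed. Iterating, after $d^+(v)$ routings of each $v\in V(C)$ the rotors return, so this alone does not produce $\overleftarrow{\varrho}$; I need to combine cycle-routings with a chip-firing-type argument. The cleaner route is to use Proposition~\ref{prop:zero_lin_ekv_cond}: $(\mathbf{0},\varrho)\sim(\mathbf{0},\overleftarrow{\varrho})$ holds iff there is a sequence of unconstrained routings realizing it, and the difference in rotor state is exactly ``replace $C$ by $\overleftarrow{C}$''. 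I would show that realizing this rotor change by routings forces the associated chip-movement vector $\sum_{e\in \overleftarrow{C}}\chi_e - \sum_{e\in C}\chi_e$ — which is $\sum_{e\in \overleftarrow{C}}\chi_e + \sum_{e\in C}\chi_e = \mathbf{0}$ actually, so one must be more careful — to be expressible as a sum of $\chi_e$ over edges whose rotors get incremented a net number of times, and this is possible with zero net chip movement precisely when a certain cut condition holds.

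Here is the version I would actually write. Let $U$ be the set of vertices lying strictly "inside" one side of $C$ — more precisely, $C$ is separating iff deleting $E(C)$ (the undirected edges of the cycle) from $G$ disconnects it into components that $C$ bounds, giving a partition $V=U\sqcup W$. For the "if" direction, suppose $C$ is separating with sides $U,W$. I claim routing each vertex of $U$ a suitable number of times, together with routings around $C$, transforms $(\mathbf{0},\varrho)$ into $(\mathbf{0},\overleftarrow{\varrho})$: firing $U$ (i.e. routing each $u\in U$ exactly $d^+(u)$ times) leaves rotors fixed and chips fixed, which is useless, so instead I track the interaction of partial cycle-routings with the cut $C^*(U,W)$. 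The key computation is that one full pass of cycle-routings around $C$ moves the rotors of $C$ forward by one and moves one chip around the cycle with net zero; doing this the right number of times, interleaved so that the "in-between" out-edges at cycle vertices also cycle back, returns everything except it reverses $C$ — and the obstruction to closing up with zero chips is exactly that the total chip flux across $C^*(U,W)$ must balance, which it does iff $C$ separates. I expect the honest argument to go through the unicycle lemma of Holroyd et al.\ / \cite{Chan15}: from a unicycle, legal routing is reversible, and the set of unicycles reachable from $(\mathbf{0},\varrho)$ corresponds to an orbit; $\overleftarrow{\varrho}$ is in this orbit iff the cut $C^*$ is "trivial" in the sandpile sense, i.e. $\sum_{e\in C^*(U,W)}\chi_e\sim\mathbf{0}$, which by Proposition~\ref{prop:zero_lin_ekv_cond} holds automatically (a cut is always such a sum), while for the non-separating case $C$ does not arise as a cut and the relevant chip vector $\sum_{e\in C}\chi_e - \sum_{e\in \overleftarrow C}\chi_e$ cannot be realized without net chip displacement.

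For the "only if" direction I would argue contrapositively: if $C$ is non-separating, then $E(C)$ (undirected) together with any choice of orientation is not a disjoint union of directed cuts, and I would show that any routing sequence from $(\mathbf{0},\varrho)$ that ends at a rotor configuration agreeing with $\varrho$ off $C$ must return the chip configuration to something of the form $\mathbf{0}$ plus a sum of $\chi_e$ over a cut-and-cycle decomposition (Proposition~\ref{prop:zero_lin_ekv_cond}); combined with the constraint that the net rotor increments around $C$ are all congruent to $1 \bmod d^+$ while off $C$ they are congruent to $0$, a counting/parity argument using the circulation identity~\eqref{eq:circulation} shows no such sequence can yield $\overleftarrow{\varrho}$ unless $C$ bounds, i.e. is separating.

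The main obstacle, and the part I would spend the most care on, is making the bookkeeping between "net rotor increments" and "net chip movement" precise: routing a vertex $v$ a total of $t$ times advances its rotor by $t$ steps in the cyclic order and is responsible for $t$ chips leaving $v$ distributed among the $t$ consecutive out-edges, so partial passes create a genuinely non-uniform chip flux that only cancels globally under the separating hypothesis. I would handle this by reducing to the case where all cycle vertices are routed the same number of times modulo $d^+$ (absorbing the rest into honest firings, which don't change rotors), at which point the residual chip vector is exactly a multiple of $\sum_{e\in C}\chi_e=\mathbf{0}$ plus the cut vector $\sum_{e\in C^*(U,W)}\chi_e$, and the equivalence with $\overleftarrow\varrho$ becomes equivalent to that cut vector being $\sim\mathbf 0$ as an element of $\mathbb{Z}_0^V$ — which is true iff the cut is "balanced", and for a bidirected graph the cut $C^*(U,W)$ fired from $\mathbf 0$ returns to $\mathbf 0$ exactly when... — this last equivalence with separatingness is where I expect to invoke a direct argument with the structure of $G^{\leftrightarrows}$ and loops being excluded.
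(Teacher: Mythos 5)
Your proposal circles the right tools (Fact~\ref{fact:chip_lin_ekv_and_rotor_lin_ekv} and the cut--cycle decomposition of Proposition~\ref{prop:zero_lin_ekv_cond}), but it has genuine gaps in both directions, and one conceptual error. First, ``separating'' here is a statement about the surface $\Sigma$, not about the graph: the cycle $C$ is separating if the closed curve it traces separates $\Sigma$ into two components. Your setup ``$C$ is separating iff deleting $E(C)$ from $G$ disconnects it into components, giving a partition $V=U\sqcup W$'' is not the right notion (deleting the edges of a cycle from a connected graph usually does not disconnect it, since both sides remain attached through the cycle's vertices). The correct combinatorial reformulation must use the ribbon structure at the cycle vertices to decide which side each incident edge lies on; this is exactly what the paper's notion of \emph{left} and \emph{right} edges (the out-edges strictly between $\varrho(v)$ and $\overleftarrow{\varrho}(v)$ in the cyclic order, versus the others) accomplishes, and it is the ingredient your sketch never identifies. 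You correctly notice that $\sum_{e\in C}\chi_e=\sum_{e\in\overleftarrow{C}}\chi_e=\mathbf{0}$, so the naive ``difference of cycle vectors'' carries no information; the information sits in the partial rotor rotations at cycle vertices, i.e.\ in the left-edge set $L$. The paper's only-if argument is: routing each cycle vertex from $\overleftarrow{\varrho}(v)$ forward to $\overleftarrow{\varrho}(v)$ again, stopping the bookkeeping appropriately, shows $(\mathbf{0},\varrho)\sim(\sum_{e\in L}\chi_e,\overleftarrow{\varrho})$; Fact~\ref{fact:chip_lin_ekv_and_rotor_lin_ekv} then gives $\sum_{e\in L}\chi_e\sim\mathbf{0}$, Proposition~\ref{prop:zero_lin_ekv_cond} decomposes $L$ into directed cycles and elementary cuts, and from the cut sides one reads off the two sides of $C$ on the surface. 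Your ``counting/parity argument using the circulation identity'' is a placeholder where this construction needs to go; without identifying $L$ as the relevant edge set, Proposition~\ref{prop:zero_lin_ekv_cond} has nothing to bite on.

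For the ``if'' direction your sketch also does not land on a working routing sequence. Repeated ``full passes around $C$'' advance all rotors of $C$ uniformly and eventually return them to their start; they cannot produce $\overleftarrow{\varrho}$, and your attempt to fix this by ``interleaving'' is not specified. The actual argument is short: orient conventions so that the reversed edges of $C$ lie on the inside; route each $v\in V(C)$ just enough times that $\varrho(v)$ advances to $\overleftarrow{\varrho}(v)$, and route each vertex strictly inside $C$ a full turn of $d^+$ times. The resulting chip flux is one chip along every edge of $G^{\leftrightarrows}$ lying inside $C$ (excluding $C$ itself), and that edge set is Eulerian (it is a bidirected graph minus the consistently oriented cycle $C$), so the flux cancels and the chip configuration stays $\mathbf{0}$. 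This uses the separating hypothesis exactly once --- to know that ``inside'' is well defined and that no routed chip escapes to the other side --- rather than through any flux-balance condition across a cut, which is where your sketch tries to locate the hypothesis.
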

The statement is not true if we have loops, but as the addition or removal of loops do not change the sandpile group, it is not a big restriction to suppose that we have no loops.
\begin{proof}
	Let $C\subset G^{\leftrightarrows}$ be the cycle of $\varrho$. First, suppose that $C$ is separating, that is, it has inside and outside.
	By symmetry, we can assume that the opposite 
	versions of the edges of $C$ are in the inside of $C$, and if we do a routing at a vertex of $C$, the rotor turns to the inside of $C$. 
	For each $v\in C$, do as many routings such that $\varrho(v)$ turns into $\overleftarrow{\varrho}(v)$. Also, for vertices $u$ inside $C$, do $d^+(u)$ routings (that is, one full turn). Then, the rotors moved from $\varrho$ to $\overleftarrow{\varrho}$. We need to check that the chip configuration remains $\mathbf{0}$. The chip movements are the following: a chip passes along each edge inside $C$ (not including edges of $C$). Note that the subgraph of edges inside $C$ is Eulerian. (It is the normal bidirected graph of an undirected graph, minus the consistently oriented cycle $C$.) Hence the effect of the chip movements indeed cancels out. 
	
	Now let us show that $(\mathbf{0},\varrho) \sim (\mathbf{0}, \overleftarrow{\varrho})$ implies that $C$ is separating. For each vertex $v\in C$, let us call an out-edge between $\varrho(v)$ and $\overleftarrow{\varrho}(v)$ (in the cyclic order of out-edges of $v$) a \emph{left edge}, and an edge between $\overleftarrow{\varrho}(v)$ and $\varrho(v)$ a \emph{right edge}. (Slightly abusing notation, we refer to the underlying undirected edge also as a left or right edge.) Let $L\subset E(G^{\leftrightarrows})$ denote the set of all left edges of vertices of $C$. Then, for $\mathbf{y}=\sum_{e\in L} \chi_e$, we have $(\mathbf{0},\varrho)\sim (\mathbf{y},\overleftarrow{\varrho})$. Indeed, for $\mathbf{y}'=\sum_{e\in L} \chi_e + \sum_{v\in V(C)} \chi_{\overleftarrow{\varrho}(v)}$, if we start from $(\mathbf{0},\varrho)$, and route each vertex $v$ until the rotor turns into $\overleftarrow{\varrho}(v)$, then we send a chip along each edge in $L$, and also along $\overleftarrow{\varrho}(v)$, hence the chip-and-rotor configuration becomes $(\mathbf{y}',\overleftarrow{\varrho})$. Note also that $\sum_{v\in V(C)} \chi_{\overleftarrow{\varrho}(v)}=0$, hence $\mathbf{y}'=\mathbf{y}$.
	
	To show that $C$ is separating, it is enough to find a partition of $V-V(C)$ into two sets $S$ and $T$ such that no edge connects $S$ and $T$, and each edge connecting $S$ to $V(C)$ is a left edge, while each edge connecting $T$ to $V(C)$ is a right edge.
	By Fact \ref{fact:chip_lin_ekv_and_rotor_lin_ekv}, we have $\mathbf{y}\sim \mathbf{0}$. By Proposition \ref{prop:zero_lin_ekv_cond}, this means that $L$ can be written as a disjoint union of elementary directed cuts and directed cycles. Let the directed cuts be $J_1, \dots J_k$, and the cycles be $C_1, \dots C_\ell$. Note that since all edges of $L$ have their tail on $C$, if $e\in C_i$, then both endpoints of $e$ are on $C$. Remember that in this setting, by elementary directed cut $J_i$ we mean that there is a partition $U_i \sqcup W_i$ of the vertex set $V(G^{\leftrightarrows})$ such that $U_i$ and $W_i$ both span connected subgraphs, and the edges of $J_i$ are the edges leading from $U_i$ to $W_i$. (There are also edges leading from $W_i$ to $U_i$, but these are not considered part of the edge set $J_i$.) 
	
	We claim that $V(C)\subseteq U_i$ for each $i$. Indeed, $V(C)\cap U_i\neq\emptyset$ since the tails of edges of $J_i$ are in $V(C)\cap U_i$. If we had $V(C) \not \subset U_i$, then some edges of $C$ were also part of $J_i$, but $J_i\subset L$ so this is not possible. 
	
	Take the vertex sets $S:=W_1\cup \dots \cup W_k$, and $T:=V-S-V(C)$.
	We claim that no edge connects a vertex of $S$ to a vertex of $T$. Indeed, if there were an edge $e$ connecting a vertex $u\in S$ and $v\in T$, then $u\in W_i$ for some $i$, and $v\notin W_i$, hence $v\in U_i$. But in this case, $\overrightarrow{vu}_e\in J_i$, and since $J_i \subseteq L$, this means that $v\in V(C)$, a contradiction with $v\in T$.
	
	Next, we show that an edge $e$ connecting $S$ to $V(C)$ is a left edge. Let $u\in S$ and $v\in V(C)$ be adjacent vertices. Again, $u\in W_i$ for some $i$. As $V(C)\subseteq U_i$, we have $v\in U_i$, and hence $\overrightarrow{vu}_e\in J_i\subseteq L$, as claimed.
	
	Finally, we show that an edge $e$ connecting $T$ to $V(C)$ is a right edge. Let $u\in T$ and $v\in V(C)$. As $u\notin V(C)$, the edge $e$ is either a left edge or a right edge.  If $\overrightarrow{vu}_e$ were a left edge, then $\overrightarrow{vu}_e\in J_i$ for some $i$. This is true because $u\notin V(C)$, and hence we cannot have $e\in C_j$. But this would mean $u\in W_i$, contradicting the assumption  $u\in T$.
	
	These imply that in the cellular embedding corresponding to the ribbon structure, $C$ is a separating cycle, separating the vertex sets $S$ and $T$.
\end{proof}

\begin{thm}\cite[Theorem 2]{Chan15}
	Let $G$ be a ribbon graph without loops. The rotor routing action (Definition \ref{def:rr_action_undirected}) of $G$ is independent of the root if and only if $G$ is planar.    
\end{thm}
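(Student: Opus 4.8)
The plan is to reduce the statement, via Proposition~\ref{prop:root_independence_consition_using_tour_action}, to Lemma~\ref{lem:unicycle_reversible}, which already packages the surface-topological content. Since $G$ is connected, its rotor-routing action is independent of the root if and only if the actions with roots $u$ and $w$ agree for every edge $e=uw$ of $G$ (if consecutive roots along a path agree, all roots agree); so I fix such an edge, noting $u\neq w$ since $G$ is loopless. For a spanning tree $T$ of $G$, write $T^u\in\Arb(G^{\leftrightarrows},u)$ for the in-arborescence toward $u$, let $P_T$ be the path of $T$ joining $u$ and $w$, and put $\varrho_T:=T^u\cup\overrightarrow{uw}$. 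The first step is a direct check of the following: if $e\notin T$, then $\varrho_T$ is a unicycle whose unique cycle is $e$ together with $P_T$, and moreover $T^w\cup\overrightarrow{wu}=\overleftarrow{\varrho_T}$; if $e\in T$, then the cycle of $\varrho_T$ is the digon on $e$, and $\overleftarrow{\varrho_T}=\varrho_T=T^w\cup\overrightarrow{wu}$. The only non-formal point here is the identity $T^w\cup\overrightarrow{wu}=\overleftarrow{\varrho_T}$: off the cycle the two rotor configurations agree because, for a vertex not on $P_T$, the first edge of its $T$-path to $u$ equals the first edge of its $T$-path to $w$, while on the cycle they are related exactly by reversal.

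Next I translate condition $(iii)$ (equivalently $(ii)$) of Proposition~\ref{prop:root_independence_consition_using_tour_action}, applied to the arcs $\overrightarrow{uw}$ and $\overrightarrow{wu}$. Unfolding Definition~\ref{def:canonical_action_on_tours} and using Proposition~\ref{prop:alt_def_rotor_action}, the equation $r(y,A^{-1}_{\overrightarrow{uw}}(T^u))=A^{-1}_{\overrightarrow{wu}}(T^w)$ is equivalent to $(y,\varrho_T)\sim(\mathbf{0},A_{\overrightarrow{uw}}(A^{-1}_{\overrightarrow{wu}}(T^w))\cup\overrightarrow{uw})$; and Lemma~\ref{lem:change_of_arborescences_when_changing_first_edge}, applied to the tour $A^{-1}_{\overrightarrow{wu}}(T^w)$ with the arcs $\overrightarrow{wu},\overrightarrow{uw}$, rewrites the target as $(\mathbf{1}_u-\mathbf{1}_w,\,T^w\cup\overrightarrow{wu})=(\mathbf{1}_u-\mathbf{1}_w,\overleftarrow{\varrho_T})$. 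Thus, for a fixed $T$, the equation holds if and only if $(y,\varrho_T)\sim(\mathbf{1}_u-\mathbf{1}_w,\overleftarrow{\varrho_T})$. Choosing any $T$ with $e\in T$ (such a $T$ exists, $e$ being a non-loop edge of a connected graph) and using $\overleftarrow{\varrho_T}=\varrho_T$ forces, via Fact~\ref{fact:chip_lin_ekv_and_rotor_lin_ekv}, that $y=\mathbf{1}_u-\mathbf{1}_w$ in $S(G)$; conversely, with $y=\mathbf{1}_u-\mathbf{1}_w$ and any $T$ with $e\notin T$, cancelling the equal chip parts turns the equation into $(\mathbf{0},\varrho_T)\sim(\mathbf{0},\overleftarrow{\varrho_T})$. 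Since the cycles $e\cup P_T$, over spanning trees $T\not\ni e$, are exactly the cycles of $G$ through $e$, Proposition~\ref{prop:root_independence_consition_using_tour_action} together with Lemma~\ref{lem:unicycle_reversible} now yields: the actions with roots $u$ and $w$ agree if and only if every cycle of $G$ through $e$ is separating.

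Finally I assemble the pieces. Ranging over all edges, the action is root-independent if and only if for every edge $e$ every cycle through $e$ is separating, if and only if every cycle of $G$ is separating in the associated surface, if and only if $G$ is planar; the last equivalence is the standard fact that a cellularly embedded ribbon graph has genus $0$ precisely when it has no non-separating cycle.

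The step I expect to cost the most care is the second-paragraph bookkeeping: keeping straight which vertex is the head and which the tail in Lemma~\ref{lem:change_of_arborescences_when_changing_first_edge}, so that the chip vector comes out as exactly $\mathbf{1}_u-\mathbf{1}_w$ and not, say, $\mathbf{1}_w-\mathbf{1}_u$, together with the verification in the first paragraph that $T^w\cup\overrightarrow{wu}$ is literally the cycle-reversal of $T^u\cup\overrightarrow{uw}$. The remaining work is a mechanical chaining of the cited propositions.
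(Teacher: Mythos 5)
Your proof is correct and follows essentially the same route as the paper's: reduce to adjacent roots, feed a single edge's two arcs into Proposition \ref{prop:root_independence_consition_using_tour_action}, shift basepoints with Lemma \ref{lem:change_of_arborescences_when_changing_first_edge}, pin down $y$ using a tree containing $e$ (where the unicycle is a digon), and let Lemma \ref{lem:unicycle_reversible} plus the fact that positive-genus cellular embeddings contain non-separating cycles do the topological work. The only difference is cosmetic: by taking the second arc to be $\overrightarrow{wu}$ rather than $\next(w,\overrightarrow{uw})$ you carry the offset $\mathbf{1}_u-\mathbf{1}_w$ in the chip vector instead of performing the paper's extra $d_G(w)-1$ routings, and your bookkeeping of that offset checks out.
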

\begin{proof} 
	We start by showing that the action is root-independent for plane graphs.
	As $G$ is connected, it is enough to show that for adjacent vertices $u$ and $v$, the rotor-routing actions with basepoints $u$ and $v$ agree. 
	
	In this proof we use the notation that if $e$ is an edge of $G$ (the undirected graph), then $\overrightarrow{uv}_e$ and $\overrightarrow{vu}_e$ are the two corresponding edges of $G^{\leftrightarrows}$.
	
	We apply Proposition \ref{prop:root_independence_consition_using_tour_action} $(iii) \Rightarrow (i)$ with $\overrightarrow{uv}_e$, and $\overrightarrow{vz}_f=\nex(v,\overrightarrow{uv}_e)$.
	
	Take an arbitrary spanning tree $T$. 
	
	$r(\mathbf{y}, A^{-1}_{\overrightarrow{uv}_e}(T^u))=A^{-1}_{\overrightarrow{vz}_f}(T^v)$ is equivalent to 
	$$(\mathbf{y}, T^u \cup \overrightarrow{uv}_e)=(\mathbf{y}, A_{\overrightarrow{uv}_e}(A^{-1}_{\overrightarrow{uv}_e}(T^u))\cup \overrightarrow{uv}_e) \sim (\mathbf{0}, A_{\overrightarrow{uv}_e}(A^{-1}_{\overrightarrow{vz}_f}(T^v))\cup \overrightarrow{uv}_e).
	$$
	By Lemma \ref{lem:change_of_arborescences_when_changing_first_edge} we have $$(\mathbf{0}, A_{\overrightarrow{uv}_e}(A^{-1}_{\overrightarrow{vz}_f}(T^v))\cup \overrightarrow{uv}_e) \sim (\mathbf{1}_z-\mathbf{1}_v, A_{\overrightarrow{vz}_f}(A^{-1}_{\overrightarrow{vz}_f}(T^v))\cup \overrightarrow{vz}_f)=(\mathbf{1}_z-\mathbf{1}_v, T^v\cup \overrightarrow{vz}_f).$$
	
	Route $d_G(v)-1$ times the vertex $v$ in $(\mathbf{1}_z-\mathbf{1}_v, T^v\cup \overrightarrow{vz}_f)$. This transforms the rotor configuration to $T^v\cup \overrightarrow{vu}_e$, where $\overrightarrow{vu}_e$ is the ``opposite edge'' of $\overrightarrow{uv}_e$ (since we choose $\overrightarrow{vz}_f=\nex(v, \overrightarrow{uv}_e)$), and the chip configuration becomes $\sum\{ \chi_{\overrightarrow{vw}}: \overrightarrow{vw} \text{ is an out-edge of } v\} \sim \mathbf{0}$.
	
	Hence we need to find $\mathbf{y}$ such that $$(\mathbf{y}, T^u \cup \overrightarrow{uv}_e)\sim (\mathbf{0}, T^v\cup \overrightarrow{vu}_e).$$
	Notice that $\varrho=T^u \cup \overrightarrow{uv}_e$ is a unicycle, and $T^v \cup \overrightarrow{vu}_e$ is its reversal $\overleftarrow{\varrho}$. Hence by Lemma \ref{lem:unicycle_reversible}, we have $(\mathbf{0}, T^u \cup \overrightarrow{uv}_e) \sim (\mathbf{0}, T^v \cup \overrightarrow{vu}_e)$, thus, $\mathbf{y}=\mathbf{0}$, regardless of the choice of $T$.
	
	Next, we show that for non-planar $G$, the rotor-routing action is not independent of the root. Notice that the above proof showing that 
	$r(\mathbf{0}, A^{-1}_{\overrightarrow{uv}_e}(T^u))=A^{-1}_{\overrightarrow{vz}_f}(T^v)$ works for any ribbon graph if the unique cycle in $\varrho=T^u\cup \overrightarrow{uv}_e$ is separating. Also, Lemma \ref{lem:unicycle_reversible} shows that if the unique cycle in $\varrho$ is not separating, then we cannot have $r(\mathbf{0}, A^{-1}_{\overrightarrow{uv}_e}(T^u))=A^{-1}_{\overrightarrow{vz}_f}(T^v)$. Hence we are ready if we can show two trees $T_1$ and $T_2$ such that the cycle in $T_1^u\cup \overrightarrow{uv}_e$ is a separating cycle, while the cycle in $T_2^u\cup \overrightarrow{uv}_e$ is nonseparating. If $G$ is nonplanar, then it has a nonseparating cycle $C$. Choose one edge $e=uv\in C$ and extend $C-e$ to a spanning tree $T_2$. For $T_1$, choose any tree such that $e\in T_1$ (which can be done since $e$ is not a loop). Then the unique cycle in $T_1^u\cup \overrightarrow{uv}_e$ is $\{\overrightarrow{uv}_e, \overrightarrow{vu}_e\}$, which is separating.
\end{proof}

It would also be interesting to see what kinds of sandpile elements $\mathbf{y}\in S(G)$ appear for nonplanar graphs.

\begin{prob}\label{prob:y_homologies}
	Do the elements $\mathbf{y}\in S(G)$ appearing as $r(\mathbf{y}, A^{-1}_{\overrightarrow{uv}_e}(T^u))=A^{-1}_{\overrightarrow{vz}_f}(T^v)$ satisfy nice algebraic conditions? Are they connected to the homologies of $\Sigma$?
\end{prob}

\subsection{An alternative formalism for the rotor routing action}

Another consequence of the results in Section \ref{sec:tour-rotor_action} is that we obtain an alternative definition for the rotor-routing action, that is more similar to the definition of the Bernardi action than the original one.

We recall the definition of the Bernardi action of the sandpile group $S(G)$ of an undirected graph $G$. For us, only the big picture is important here. For the exact definitions, see \cite{Baker-Wang}.
The Bernardi action is defined the following way: 
$S(G)$ acts (canonically) on the set of break divisors of $G$ by addition. The Bernardi bijection $\beta_{v,e}$ is a bijection between break divisors and spanning trees, that depends on a ribbon structure and a fixed vertex $v$ and an incident edge $e=vu$. The Bernardi action is the composition of the canonical action on break divisors with the Bernardi bijection. It is known that for a fixed ribbon structure, the action does not depend on the fixed edge $e$, only on the vertex $v$.

Originally, the definition of the rotor-routing action is very different to the definition of the Bernardi action. However, given the tour-rotor action, we can give a very similar definition.

\begin{defn}[Alternative definition of the rotor-routing action]
	$$r_v(x,T) = A_{\overrightarrow{uv}_e}(r(x,A^{-1}_{\overrightarrow{uv}_e}(T^u))).$$
\end{defn}
That is, we can also write the rotor routing action as the composition of a canonical action on some set of objects, and a bijection between those objects and the spanning trees, depending on a fixed vertex and edge. Here, too, the fixed edge does not make a difference.

Of course this alternative definition is in some sense ``cheating'', because we used $r_v$ to define the tour-rotor action. But it still enables us to see additional structure behind changing the root vertex. This gives hope to uncover a finer relationship between Bernardi and rotor-routing actions in the non-planar case.

\begin{prob}\label{prob:finer_relationships}
	Use the alternative definition of the rotor-routing action to uncover a finer relationship between the rotor-routing and Bernardi actions on non-planar graphs.
\end{prob}

We note another interesting fact pointed out by Tamás Kálmán: For the Bernardi action, the set of break divisors does not depend on the ribbon structure, while the Bernardi bijections do depend on it. On the other hand, for the rotor-routing action, the set of compatible Eulerian tours depends on the ribbon structure, while the bijections between compatible Eulerian tours and spanning trees do not depend on the ribbon structure.

\section{Special case: The sandpile group of the medial digraph acts canonically on quasi-trees}
\label{s:medial}

For a graph embedded into a closed orientable surface $\Sigma$, there is a natural analogue to spanning trees: spanning quasi-trees. A spanning quasi-tree is a spanning subgraph, whose $\varepsilon$-neighborhood (within $\Sigma$) has one boundary component.

If the graph is embedded into the plane (sphere), then the quasi-trees are exactly the spanning trees. However, if $\Sigma$ is not the sphere, the two notions diverge. Spanning trees are always quasi-trees, but if $\Sigma$ is not the sphere, new quasi-trees appear. See Figure \ref{f:embedded_graph}, where the thick red edges on the right panel show a spanning quasi-tree that is not a spanning tree.

Spanning trees are the bases of the graphic matroid. Analogously to this, embedded graphs have a delta-matroid structure, and quasi-trees are the bases of this delta-matroid. 

Recently, Merino, Moffatt and Noble defined a Jacobian group for embedded graphs \cite{merino2023critical}, whose cardinality is the number of quasi-trees. Baker, Ding and Kim \cite{BDK} showed that this group acts canonically on the quasi-trees. (We note that \cite{BDK} also generalizes the Jacobian to regular orthogonal matroids, but in this paper we only consider embedded graphs.)

Here, we point out that the tour-rotor action from Section \ref{sec:tour-rotor_action} (if applied appropriately) also gives a canonical free, transitive action on the quasi-trees. More precisely, the compatible Eulerian tours of the so-called medial digraph of $G$ are in canonical bijection with the quasi-trees of $G$ (due to Bouchet \cite{Bouchet}). Hence by Corollary \ref{cor:Euler_sandpile_acts_on_tours}, the sandpile group of the medial digraph has a canonical free, transitive action on the quasi-trees of $G$. 

We will show that in fact the sandpile group of the medial digraph is canonically isomorphic to the Jacobian of the embedded graph, and moreover, the tour-rotor action of the sandpile group of the medial digraph also agrees with the Bernardi action of the Jacobian by Baker, Ding and Kim \cite{BDK}.

We note that Baker, Ding and Kim need a reference orientation to define the Jacobian, while no reference orientation is needed for defining the sandpile group of the medial digraph.

After this summary, let us give the exact definitions. 

\subsection{Embedded graphs, medial digraphs, and the action on quasi-trees}

Let $G$ be a ribbon graph. We will think of it as cellularly embedded into the closed orientable surface $\Sigma$.
A dual $G^*$ of $G$ can be defined with respect to $\Sigma$: Put a vertex of $G^*$ inside each country of the embedding (in other words, in each component of $\Sigma\setminus G$), and connect dual vertices of neighboring countries through each edge of $G$. This way, the edge set $E(G^*)$ of $G^*$ is in bijection with the edge set $E(G)$ of $G$: for an edge $e\in E(G)$, the corresponding edge $e^*\in E(G^*)$ is the one that intersects $e$.

Each pair of dual edges $e\in E(G)$ and $e^*\in E(G^*)$ intersect in a point. Let us call this point $v_e$. To avoid confusing these nodes with vertices of $G$ or $G^*$, we will call them \emph{emerald nodes}. Here, ``emerald'' stands for ``edge''. These nodes will be emerald on the figures. Also, vertices of $G$ will be violet (for vertex), and vertices of $G^*$ will be red (for region). See Figure \ref{f:embedded_graph}. 

\begin{figure}
	\begin{center}
		\begin{tikzpicture}[-,>=stealth',auto,scale=0.75, thick]
			\tikzstyle{r}=[circle,scale=0.5,fill,draw,color=red]
			\tikzstyle{b}=[circle,scale=0.5,fill,draw,color=v]
			\tikzstyle{e}=[circle,scale=0.5,fill,draw,color=green]
			\begin{scope}[shift={(-4,0)}]
				\draw  (0, 0) rectangle (5,4);                \draw[->] (0,1) -- (0,1.5);
				\draw[->] (5,1) -- (5,1.5);
				\draw[->>] (1.5, 0) -- (2.8, 0);
				\draw[->>] (1.5, 4) -- (2.8, 4);
				\node[b] (2) at (2, 1) {};
				\node[b] (1) at (4, 1) {};
				\node[r] (3) at (1, 3) {};
				\node[r] (4) at (3, 3) {};
				\draw[-,color=red] (2) -- (1);
				\draw[-,color=red] (1) -- (5, 1);
				\draw[-,color=red] (0, 1) -- (2);
				\draw[-,color=red] (1) -- (4, 4);
				\draw[-,color=red] (2) -- (2, 4);
				\draw[-,color=red] (4, 0) -- (1);
				\draw[-,color=red] (2, 0) -- (2);
				\draw[-,color=v] (4) -- (3);
				\draw[-,color=v] (3) -- (0, 3);
				\draw[-,color=v] (5, 3) -- (4);
				\draw[-,color=v] (3, 0) -- (4);
				\draw[-,color=v] (1, 0) -- (3);
				\draw[-,color=v] (4) -- (3, 4);
				\draw[-,color=v] (3) -- (1, 4);
				\node[e] (5) at (1, 1) {};
				\node[e] (6) at (2, 3) {};
				\node[e] (7) at (3, 1) {};
				\node[e] (8) at (4, 3) {};
				\draw[->] (6) -- (5);
				\draw[-] (5) -- (0,2);
				\draw[->] (5,2) -- (8);
				\draw[->] (6) -- (5);
				\draw[->] (7) -- (6);
				\draw[->] (8) -- (7);
				\draw[->] (3.5, 4) -- (8);
				\draw[-] (7) -- (3.5, 0);
				\draw[-] (8) -- (5, 3.5);
				\draw[-] (0,3.5) -- (0.5, 4);
				\draw[->] (0.5, 0) -- (5);
				\draw[-] (5) -- (1.5, 0);
				\draw[->] (1.5, 4) -- (6);
				\draw[-] (6) -- (2.5,4);
				\draw[->] (2.5,0) -- (7);
			\end{scope}
			\begin{scope}[shift={(4,0)}]
				\draw  (0, 0) rectangle (5,4);
				\draw[->] (0,1) -- (0,1.5);
				\draw[->] (5,1) -- (5,1.5);
				\draw[->>] (1.5, 0) -- (2.8, 0);
				\draw[->>] (1.5, 4) -- (2.8, 4);
				\node[b] (2) at (2, 1) {};
				\node[b] (1) at (4, 1) {};
				\node[r] (3) at (1, 3) {};
				\node[r] (4) at (3, 3) {};
				\draw[-,color=red,line width=3.5pt] (2) -- (1);
				\draw[-,color=red,line width=3.5pt] (1) -- (5, 1);
				\draw[-,color=red,line width=3.5pt] (0, 1) -- (2);
				\draw[-,color=red] (1) -- (4, 4);
				\draw[-,color=red,line width=3.5pt] (2) -- (2, 4);
				\draw[-,color=red] (4, 0) -- (1);
				\draw[-,color=red,line width=3.5pt] (2, 0) -- (2);
				\draw[-,color=v] (4) -- (3);
				\draw[-,color=v] (3) -- (0, 3);
				\draw[-,color=v] (5, 3) -- (4);
				\draw[-,color=v] (3, 0) -- (4);
				\draw[-,color=v] (1, 0) -- (3);
				\draw[-,color=v] (4) -- (3, 4);
				\draw[-,color=v] (3) -- (1, 4);
				\node[e] (5) at (1, 1) {};
				\node[e] (6) at (2, 3) {};
				\node[e] (7) at (3, 1) {};
				\node[e] (8) at (4, 3) {};
				\draw[color=gray,line width=3.5pt] (0, 3.5) -- (0.5, 4);
				\draw[color=gray,rounded corners=18pt,line width=3.5pt] (1.5, 4) -- (2, 3) -- (1, 1) -- (0, 2);
				\draw[color=gray,rounded corners=18pt,line width=3.5pt] (0.5, 0) -- (1, 1) -- (1.5, 0);
				\draw[color=gray,rounded corners=18pt,line width=3.5pt] (2.5, 0) -- (3, 1) -- (3.5, 0);
				\draw[color=gray,rounded corners=18pt,line width=3.8pt] 
				(5, 2) -- (4, 3) -- (3, 1) -- (2, 3) -- (2.5, 4);
				\draw[color=gray,rounded corners=18pt,line width=3.8pt] (3.5, 4) -- (4, 3) -- (5, 3.5);
				\draw[->,dashed] (6) -- (5);
				\draw[-] (5) -- (0,2);
				\draw[->] (5,2) -- (8);
				\draw[->] (6) -- (5);
				\draw[->] (7) -- (6);
				\draw[->] (8) -- (7);
				\draw[->] (3.5, 4) -- (8);
				\draw[-] (7) -- (3.5, 0);
				\draw[-] (8) -- (5, 3.5);
				\draw[-] (0,3.5) -- (0.5, 4);
				\draw[->] (0.5, 0) -- (5);
				\draw[-] (5) -- (1.5, 0);
				\draw[->] (1.5, 4) -- (6);
				\draw[-] (6) -- (2.5,4);
				\draw[->] (2.5,0) -- (7);
			\end{scope}
		\end{tikzpicture}
	\end{center}
	\caption{Left panel: A graph $G$ embedded into the torus (violet vertices, red edges), its dual $G^*$ (red vertices, violet edges), and the medial digraph $G^{\bowtie}$ (emerald nodes, black edges). Right panel: A quasi-tree of $G$ (thick red edges) and the corresponding Eulerian tour of $G^{\bowtie}$ (indicated by a grey curve).	\label{f:embedded_graph}}
\end{figure}
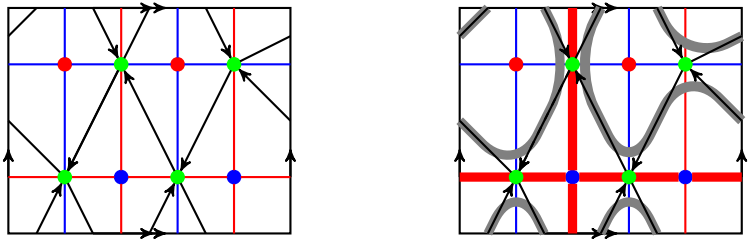

The medial digraph of the ribbon graph $G$ is a balanced Eulerian ribbon digraph, denoted by $G^{\bowtie}$. The node set of $G^{\bowtie}$ is $\{v_e \mid e\in E(G)\}$.
An edge of $G^{\bowtie}$ leads from $v_e$ to $v_f$, if the edges $e$ and $f$ are incident to a common vertex $v$, and at that vertex, $f$ is the edge following $e$ in the ribbon structure.
This way, each emerald node $v_e$ has two in-edges and two out-edges, embedded into $\Sigma$ in an alternating way. Around $v_e$, in positive cyclic order, we see a half-edge of $e$, an in-edge of $v_e$, a half-edge of $e^*$, an out-edge of $v_e$, a half-edge of $e$, an in-edge of $v_e$, a half-edge of $e^*$, and finally, an out-edge of $v_e$.
See Figure \ref{f:embedded_graph} for an example. 

By Bouchet \cite[Corollary 3.4]{Bouchet}, the Eulerian tours of $G^{\bowtie}$ are in canonical bijection with the quasi-trees of $G$. 
\begin{bij}\label{bij_tour_quasi-tree}
	To an Eulerian tour of $G^{\bowtie}$, one can associate the following quasi-tree of $G$: 
	Take any emerald node $v_e$. As the tour arrives to $v_e$ through one of the in-edges of $v_e$, it either continues with the out-edge that is on the same side of $e$ and on the opposite side of $e^*$, or with the out-edge that is on the opposite side of $e$, and on the same side of $e^*$ as the in-edge. That is, the Euler tour either crosses $e$ and does not cross $e^*$ or vice versa. Associate to the tour the edge set $Q\subseteq E(G)$ consisting of the edges $e$ that are not crossed by the tour (at $v_e$).
\end{bij} 

By Bouchet \cite{Bouchet}, this is a bijection. The Eulerian tour is homotopic to the $\varepsilon$-boundary of the quasi-tree in $\Sigma$. See the right panel of Figure \ref{f:embedded_graph} for an example.

Note that for $G^{\bowtie}$, each emerald node has out-degree two, hence each Eulerian tour of $G^{\bowtie}$ is automatically compatible with the embedding. Corollary \ref{cor:Euler_sandpile_acts_on_tours} implies:
\begin{thm}
	For an embedded graph $G$, via the tour-rotor action and Bijection \ref{bij_tour_quasi-tree}, $S(G^{\bowtie})$ canonically acts on the quasi-trees of $G$.
\end{thm}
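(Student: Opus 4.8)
The statement is a direct corollary of Corollary~\ref{cor:Euler_sandpile_acts_on_tours} together with Bouchet's bijection (Bijection~\ref{bij_tour_quasi-tree}), so the plan is simply to verify that the hypotheses of Corollary~\ref{cor:Euler_sandpile_acts_on_tours} hold for the medial digraph $G^{\bowtie}$, and then to transport the tour-rotor action through Bijection~\ref{bij_tour_quasi-tree}.

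First I would record that $G^{\bowtie}$ is an Eulerian ribbon digraph that is weakly connected: by construction each emerald node $v_e$ has exactly two in-edges and two out-edges (so $d^+ = d^- = 2$ at every node), the cyclic order of edges around $v_e$ inherited from the embedding in $\Sigma$ gives the ribbon structure, and weak connectedness of $G^{\bowtie}$ follows from connectedness of $G$. Next, since $d^+(v_e) = 2$ for every node, we have $\prod_{v} (d^+(v)-1)! = 1$; by the discussion following Definition~\ref{def:Euler_tour_arb_bijection} this means that for each node there is only one admissible cyclic order of out-edges, so the compatibility condition is automatically satisfied and \emph{every} Eulerian tour of $G^{\bowtie}$ is compatible with the embedding. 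Applying Corollary~\ref{cor:Euler_sandpile_acts_on_tours} to $D = G^{\bowtie}$ then yields a canonical, simply transitive action $r$ of $S(G^{\bowtie})$ on the set of all Eulerian tours of $G^{\bowtie}$.

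Finally I would transport this action to quasitrees. Let $\phi$ denote the bijection of Bijection~\ref{bij_tour_quasi-tree} from Eulerian tours of $G^{\bowtie}$ to quasitrees of $G$, and define, for $x \in S(G^{\bowtie})$ and a quasitree $Q$ of $G$, the assignment $x \cdot Q := \phi\bigl(r(x,\phi^{-1}(Q))\bigr)$. Since $\phi$ is a bijection and $r$ is a group action, $x \cdot Q$ is again a group action, and it is simply transitive because $r$ is. It is canonical because neither ingredient involves an auxiliary choice: the tour-rotor action $r$ is canonical by Corollary~\ref{cor:Euler_sandpile_acts_on_tours} (in particular, independent of root and of base edge), and $\phi$ is Bouchet's bijection, which depends only on the embedding of $G$ into $\Sigma$. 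As a consistency check one may note that $|S(G^{\bowtie})|$ equals the number of in-arborescences of $G^{\bowtie}$ rooted at any emerald node (Fact~\ref{fact:order_of_sandpile_group}), which by the BEST theorem equals the number of Eulerian tours of $G^{\bowtie}$, which via $\phi$ equals the number of quasitrees of $G$.

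There is essentially no substantive obstacle at this level: all the genuine work sits in Corollary~\ref{cor:Euler_sandpile_acts_on_tours} and in Bouchet's theorem, both of which we may invoke. The only points requiring a moment's care are the two verifications above — that $G^{\bowtie}$ is a weakly connected Eulerian ribbon digraph, and that the out-degree-$2$ condition forces every Eulerian tour to be compatible — together with making explicit that ``canonical'' is inherited because each of the two composed maps is choice-free.
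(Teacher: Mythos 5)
Your proposal is correct and follows exactly the paper's (very short) argument: the paper likewise observes that every emerald node has out-degree two, so every Eulerian tour of $G^{\bowtie}$ is automatically compatible, and then invokes Corollary \ref{cor:Euler_sandpile_acts_on_tours} together with Bijection \ref{bij_tour_quasi-tree} to transport the canonical tour-rotor action to quasitrees. Your additional verifications (weak connectedness, the cardinality consistency check) are harmless elaborations of what the paper leaves implicit.
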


See an example for the action on Figure \ref{f:action_ex}. We prove in Subsection \ref{ss:relationship} that the sandpile group of $G^{\bowtie}$ is (canonically) isomorphic to the Jacobian of an embedded graph, by Baker, Ding and Kim \cite{BDK}, and the above action agrees with the Bernardi action of the Jacobian from \cite{BDK}.

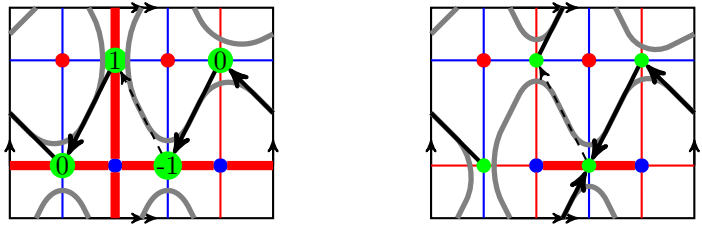
\begin{figure}
	\begin{center}
		\begin{tikzpicture}[-,>=stealth',auto,scale=0.7,thick]
			\tikzstyle{r}=[circle,scale=0.5,fill,draw,color=red]
			\tikzstyle{b}=[circle,scale=0.5,fill,draw,color=v]
			\tikzstyle{e}=[circle,scale=0.5,fill,draw,color=green]
			\begin{scope}[shift={(-4,0)}]
				\draw  (0, 0) rectangle (5,4);
				\draw[->] (0,1) -- (0,1.5);
				\draw[->] (5,1) -- (5,1.5);
				\draw[->>] (1.5, 0) -- (2.8, 0);
				\draw[->>] (1.5, 4) -- (2.8, 4);
				\node[b] (2) at (2, 1) {};
				\node[b] (1) at (4, 1) {};
				\node[r] (3) at (1, 3) {};
				\node[r] (4) at (3, 3) {};
				\draw[-,color=red,line width=3.5pt] (2) -- (1);
				\draw[-,color=red,line width=3.5pt] (1) -- (5, 1);
				\draw[-,color=red,line width=3.5pt] (0, 1) -- (2);
				\draw[-,color=red] (1) -- (4, 4);
				\draw[-,color=red,line width=3.5pt] (2) -- (2, 4);
				\draw[-,color=red] (4, 0) -- (1);
				\draw[-,color=red,line width=3.5pt] (2, 0) -- (2);
				\draw[-,color=v] (4) -- (3);
				\draw[-,color=v] (3) -- (0, 3);
				\draw[-,color=v] (5, 3) -- (4);
				\draw[-,color=v] (3, 0) -- (4);
				\draw[-,color=v] (1, 0) -- (3);
				\draw[-,color=v] (4) -- (3, 4);
				\draw[-,color=v] (3) -- (1, 4);
				\node[e] (5) at (1, 1) {0};
				\node at (1, 1) {0};
				\node[e] (6) at (2, 3) {1};
				\node at (2, 3) {1};
				\node[e] (7) at (3, 1) {-1};
				\node at (3, 1) {-1};
				\node[e] (8) at (4, 3) {0};
				\node at (4, 3) {0};
				\draw[color=gray,line width=2pt] (0, 3.5) -- (0.5, 4);
				\draw[color=gray,rounded corners=18pt,line width=2pt] (1.5, 4) -- (2, 3) -- (1, 1) -- (0, 2);
				\draw[color=gray,rounded corners=18pt,line width=2pt] (0.5, 0) -- (1, 1) -- (1.5, 0);
				\draw[color=gray,rounded corners=18pt,line width=2pt] (2.5, 0) -- (3, 1) -- (3.5, 0);
				\draw[color=gray,rounded corners=18pt,line width=2pt] 
				(5, 2) -- (4, 3) -- (3, 1) -- (2, 3) -- (2.5, 4);
				\draw[color=gray,rounded corners=18pt,line width=2pt] (3.5, 4) -- (4, 3) -- (5, 3.5);
				\draw[->,ultra thick] (6) -- (5);
				\draw[-,ultra thick] (5) -- (0,2);
				\draw[->,ultra thick] (5,2) -- (8);
				\draw[->,dashed,thick] (7) -- (6);
				\draw[->,ultra thick] (8) -- (7);
			\end{scope}    \begin{scope}[shift={(4,0)}]
				\draw  (0, 0) rectangle (5,4);
				\node[b] (2) at (2, 1) {};
				\node[b] (1) at (4, 1) {};
				\node[r] (3) at (1, 3) {};
				\node[r] (4) at (3, 3) {};
				\draw[->] (0,1) -- (0,1.5);
				\draw[->] (5,1) -- (5,1.5);
				\draw[->>] (1.5, 0) -- (2.8, 0);
				\draw[->>] (1.5, 4) -- (2.8, 4);
				
				\draw[-,color=red,line width=3.5pt] (2) -- (1);
				\draw[-,color=red] (1) -- (5, 1);
				\draw[-,color=red] (0, 1) -- (2);
				\draw[-,color=red] (1) -- (4, 4);
				\draw[-,color=red] (2) -- (2, 4);
				\draw[-,color=red] (4, 0) -- (1);
				\draw[-,color=red] (2, 0) -- (2);
				\draw[-,color=v] (4) -- (3);
				\draw[-,color=v] (3) -- (0, 3);
				\draw[-,color=v] (5, 3) -- (4);
				\draw[-,color=v] (3, 0) -- (4);
				\draw[-,color=v] (1, 0) -- (3);
				\draw[-,color=v] (4) -- (3, 4);
				\draw[-,color=v] (3) -- (1, 4);
				\node[e] (5) at (1, 1) {};
				\node[e] (6) at (2, 3) {};
				\node[e] (7) at (3, 1) {};
				\node[e] (8) at (4, 3) {};
				\draw[color=gray,line width=2pt] (0, 3.5) -- (0.5, 4);
				\draw[color=gray,rounded corners=15pt,line width=2pt] (5, 2) -- (4, 3) -- (3, 1) -- (2, 3) -- (1, 1) -- (1.5, 0);
				\draw[color=gray,rounded corners=15pt,line width=2pt] (0.5, 0) -- (1, 1) -- (0, 2);
				\draw[color=gray,rounded corners=15pt,line width=2pt] (2.5, 0) -- (3, 1) -- (3.5, 0);
				\draw[color=gray,rounded corners=15pt,line width=2pt] (1.5, 4) -- (2, 3) --       (2.5, 4);
				\draw[color=gray,rounded corners=12pt,line width=2pt] (3.5, 4) -- (4, 3) -- (5, 3.5);
				\draw[-,ultra thick] (5) -- (0,2);
				\draw[->,ultra thick] (5,2) -- (8);
				\draw[->,dashed,thick] (7) -- (6);
				\draw[->,ultra thick] (8) -- (7);
				\draw[-,ultra thick] (6) -- (2.5,4);
				\draw[->,ultra thick] (2.5,0) -- (7);
			\end{scope}
		\end{tikzpicture}
	\end{center}
	\caption{The tour-rotor action of $(-1,0,1,0)\in S(G^{\bowtie})$ on the Eulerian tour of the left panel (and the corresponding quasi-tree) gives the Eulerian tour (and quasi-tree) on the right panel. If we choose the dashed medial edge as first edge of the tour, the thick edges are obtained as in-arborescence corresponding to the tour.\label{f:action_ex}}
\end{figure}

\subsection{The Jacobian of an embedded graph}
Here, we recall the definition of the Jacobian by Baker, Ding, and Kim \cite{BDK}. They gave the definition for regular orthogonal matroids, but we will only talk about the special case of embedded graphs. Hence, we will not introduce orthogonal matroids.

Let $G$ be a graph cellularly embedded into the closed orientable surface $\Sigma$, and let $G^*$ be its dual. An edge set $S\subseteq E(G)\sqcup E(G^*)$ is called a subtransversal, if for each pair of dual edges $\{e,e^*\}$, at most one of them is in $S$.

For defining the Jacobian of the embedded graph $G$, the notion of a cycle is essential.
A cycle is a minimal subtransversal $S\subseteq E(G)\sqcup E(G^*)$ 
such that $\Sigma\setminus S$ has  two connected components. 

\begin{figure}
	\begin{center}
		\begin{tikzpicture}[-,>=stealth',auto,scale=0.8, thick]
			\tikzstyle{r}=[circle,scale=0.5,fill,draw,color=red]
			\tikzstyle{b}=[circle,scale=0.5,fill,draw,color=v]
			\tikzstyle{e}=[circle,scale=0.5,fill,draw,color=green]
			\begin{scope}[shift={(-4,0)}]
				\draw  (0, 0) rectangle (5,4);
				\draw[->] (0,1) -- (0,2);
				\draw[->] (5,1) -- (5,2);
				\draw[->>] (1.5, 0) -- (2.8, 0);
				\draw[->>] (1.5, 4) -- (2.8, 4);
				\node[b] (2) at (2, 1) {$v_1$};
				\node[b] (1) at (4.3, 1) {$v_2$};
				\node[r] (3) at (0.7, 3) {$r_1$};
				\node[r] (4) at (3, 3) {$r_2$};
				\node at (2, 1) {\color{white} $v_1$};
				\node at (4.3, 1) {\color{white} $v_2$};
				\node at (0.7, 3) {\color{white}$r_1$};
				\node at (3, 3) {\color{white}$r_2$};
				\draw[-,color=red] (2) -- (1);
				\draw[-,color=red] (1) -- (5, 1);
				\draw[-,color=red] (0, 1) -- (2);
				\draw[-,color=red] (1) -- (4.3, 4);
				\draw[-,color=red] (2) -- (2, 4);
				\draw[-,color=red] (4.3, 0) -- (1);
				\draw[-,color=red] (2, 0) -- (2);
				\draw[-,color=v] (4) -- (3);
				\draw[-,color=v] (3) -- (0, 3);
				\draw[-,color=v] (5, 3) -- (4);
				\draw[-,color=v] (3, 0) -- (4);
				\draw[-,color=v] (0.7, 0) -- (3);
				\draw[-,color=v] (4) -- (3, 4);
				\draw[-,color=v] (3) -- (0.7, 4);
				\node[fill,color=white] at (1.3, 1) {$e$};
				\node at (1.3, 1) {$e_2$};
				\node[fill,color=white] at (3.6, 1) {$e$};
				\node at (3.6, 1) {$e_1$};
				\node[fill,color=white] at (3, 2) {$e_1$};
				\node at (3, 2) {$e_1^*$};
				\node[fill,color=white] at (0.7, 2) {$e_1$};
				\node at (0.7, 2) {$e_2^*$};
				\node[fill,color=white] at (1.4, 3) {$e_1$};
				\node at (1.4, 3) {$e_3^*$};
				\node[fill,color=white] at (3.7, 3) {$e_1$};
				\node at (3.7, 3) {$e_4^*$};
				\node[fill,color=white] at (2, 2) {$e$};
				\node at (2, 2) {$e_3$};
				\node[fill,color=white] at (4.3, 2) {$e$};
				\node at (4.3, 2) {$e_4$};
			\end{scope}
			\begin{scope}[shift={(2,0)}]
				\draw  (0, 0) rectangle (6,4);
				\draw[->] (0,1) -- (0,2);
				\draw[->] (6,1) -- (6,2);
				\draw[->>] (2, 0) -- (3.3, 0);
				\draw[->>] (2, 4) -- (3.3, 4);
				\node[b] (2) at (2.5, 1) {$v_1$};
				\node[b] (1) at (5.3, 1) {$v_2$};
				\node[r] (3) at (0.7, 3) {$r_1$};
				\node[r] (4) at (3.5, 3) {$r_2$};
				\node at (2.5, 1) {\color{white} $v_1$};
				\node at (5.3, 1) {\color{white} $v_2$};
				\node at (0.7, 3) {\color{white}$r_1$};
				\node at (3.5, 3) {\color{white}$r_2$};
				\draw[->,color=red] (2) -- (1);
				\draw[-,color=red] (1) -- (6, 1);
				\draw[->,color=red] (0, 1) -- (2);
				\draw[-,color=red] (1) -- (5.3, 4);
				\draw[-,color=red] (2) -- (2.5, 4);
				\draw[->,color=red] (5.3, 0) -- (1);
				\draw[->,color=red] (2.5, 0) -- (2);
				\draw[->,color=v] (4) -- (3);
				\draw[-,color=v] (3) -- (0, 3);
				\draw[->,color=v] (6, 3) -- (4);
				\draw[->,color=v] (3.5, 0) -- (4);
				\draw[->,color=v] (0.7, 0) -- (3);
				\draw[-,color=v] (4) -- (3.5, 4);
				\draw[-,color=v] (3) -- (0.7, 4);
				\node[fill,color=white] at (1.3, 1) {$e$};
				\node at (1.3, 1) {$e_2$};
				\node[fill,color=white] at (4.4, 1) {$e$};
				\node at (4.4, 1) {$e_1$};
				\node[fill,color=white] at (3.5, 2) {$e_1$};
				\node at (3.5, 2) {$e_1^*$};
				\node[fill,color=white] at (0.7, 2) {$e_1$};
				\node at (0.7, 2) {$e_2^*$};
				\node[fill,color=white] at (1.8, 3) {$e_1$};
				\node at (1.8, 3) {$e_3^*$};
				\node[fill,color=white] at (4.7, 3) {$e_1$};
				\node at (4.7, 3) {$e_4^*$};
				\node[fill,color=white] at (2.5, 2) {$e$};
				\node at (2.5, 2) {$e_3$};
				\node[fill,color=white] at (5.3, 2) {$e$};
				\node at (5.3, 2) {$e_4$};
			\end{scope}
		\end{tikzpicture}
	\end{center}
	\caption{Left: An embedded graph and its dual. Right: A reference orientation and the induced orientation on the dual. 
		\label{f:labeled_duals_and_reference_orientation}}
\end{figure}
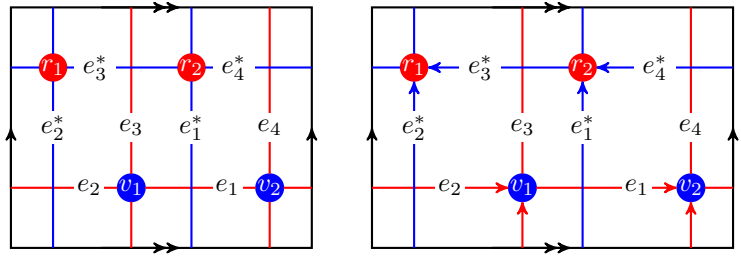

\begin{ex}
	Figure \ref{f:labeled_duals_and_reference_orientation} shows a graph $G$ embedded into a torus, and its dual $G^*$. (We borrow this running example from \cite{BDK}.) The cycles are $\{e_1^*,e_2^*\}$, $\{e_1^*,e_3\}$, $\{e_1^*,e_4\}, \{e_2^*,e_3\}, \{e_2^*,e_4\}, \{e_3,e_4\}$, $\{e_1,e_2,e_3^*,e_4^*\}$. 
\end{ex}

The embedding also induces an oriented structure on the cycles, but to define it, one needs to take a reference orientation on $G$.
Hence let us fix an arbitrary reference orientation on $G$.
This naturally induces a reference orientation on $G^*$ as well: From the orientation of an edge $e\in E(G)$, one obtains the orientation of $e^*$ by turning $e$ in the positive direction. (See Figure \ref{f:labeled_duals_and_reference_orientation}.) Note that by turning $e^*$ in the positive direction, one obtains the opposite orientation of $e$.

To a cycle $C$ 
one can associate the following signed cycle $\mathbf{C}\in \mathbb{Z}^{E(G)\cup E(G^*)}$ with support $C$: Take a connected component $D$ of $\Sigma \setminus C$. The positive orientation of $\Sigma$ induces an orientation on $\partial D = C$. Let $\mathbf{C}(e)=1$ if $e\in C$ and the induced orientation agrees with the reference orientation of $e$, and $\mathbf{C}(e)=-1$ if $e\in C$ and the induced orientation is opposite to the reference orientation of $e$. By a slight abuse of notation, we also denote $\mathbf{C}=\partial D$. Note that every cycle supports two oriented cycles corresponding to the two components of $\Sigma\setminus C$, that are related by multiplication with $-1$.
We denote the set of all signed cycles by $\mathcal{C}(G,\Sigma)$. 

\begin{ex}
	For the graph and reference orientation on Figure \ref{f:labeled_duals_and_reference_orientation}, the signed cycles are (up to multiplication by $-1$): $(0,0,0,0;1,-1,0,0)$, $(0,0,-1,0;1,0,0,0)$, $(0,0,0,1;-1,0,0,0)$, $(0,0,1,0;0,-1,0,0)$, $(0,0,0,1;0,-1,0,0)$, $(0,0,-1,1;0,0,0,0)$, $(1,1,0,0;0,0,1,1)$, where coordinates are listed in the order $e_1, e_2, e_3, e_4, e_1^*, e_2^*, e_3^*, e_4^*$.
\end{ex}

To define the Jacobian of $(G,\Sigma)$, one needs a projection map $\pi:\mathbb{Z}^{E(G)\cup E(G^*)} \to \mathbb{Z}^{E(G)}$: For $\mathbf{z}\in \mathbb{Z}^{E(G)\cup E(G^*)}$, and $e\in E(G)$, $\pi(\mathbf{z})(e):=\mathbf{z}(e)+\mathbf{z}(e^*)$.

Let us denote $\pi(\mathcal{C}(G,\Sigma))=\{\pi(\mathbf{C}): \mathbf{C}\in\mathcal{C}(G,\Sigma)\}$. We denote by $\langle \pi(\mathcal{C}(G,\Sigma))\rangle$ the integer linear combinations of vectors in $\pi(\mathcal{C}(G,\Sigma))$.

Baker, Ding and Kim define the following equivalence relation $\sim_J$ on $\mathbb{Z}^E(G)$.

\begin{defn}
	For $\mathbf{x}_1, \mathbf{x}_2\in \mathbb{Z}^E$, let $\mathbf{x}_1 \sim_J \mathbf{x}_2$ if $\mathbf{x}_1-\mathbf{x}_2 \in \langle \pi(\mathcal{C}(G,\Sigma))\rangle$.    
\end{defn}

The Jacobian group of $(G,\Sigma)$ is defined as: \begin{defn}[Jacobian of $(G,\Sigma)$]
	$$Jac(\mathcal{C}(G,\Sigma)):=\mathbb{Z}^E/\sim_J.$$    
\end{defn}

Baker, Ding and Kim \cite[Corollary 3.8]{BDK} prove that $|Jac(\mathcal{C})|$ is equal to the number of quasi-trees of $(G,\Sigma)$.

\begin{ex}
	For the embedded graph on Figure \ref{f:labeled_duals_and_reference_orientation}, $Jac(\mathcal{C}(G,\Sigma))=\mathbb{Z}/4\mathbb{Z}$, see \cite[Example 3.3]{BDK} for more details. The four quasi-trees of $(G,\Sigma)$ are $\{e_1\}$, $\{e_2\}$, $\{e_1,e_2,e_3\}$ and $\{e_1,e_2,e_4\}$.
\end{ex}

\begin{figure}
	\begin{center}
		\begin{tikzpicture}[-,>=stealth',auto,scale=0.75, thick]
			\tikzstyle{r}=[circle,scale=0.5,fill,draw,color=red]
			\tikzstyle{b}=[circle,scale=0.5,fill,draw,color=v]
			\tikzstyle{e}=[circle,scale=0.5,fill,draw,color=green]
				\draw  (0, 0) rectangle (5,4);
				\draw[->] (0,1) -- (0,1.5);
				\draw[->] (5,1) -- (5,1.5);
				\draw[->>] (1.5, 0) -- (2.8, 0);
				\draw[->>] (1.5, 4) -- (2.8, 4);
				\node[b] (2) at (2, 1) {};
				\node[b] (1) at (4, 1) {};
				\node[r] (3) at (1, 3) {};
				\node[r] (4) at (3, 3) {};
				\draw[<-,color=red] (2) -- (1);
				\draw[-,color=red] (1) -- (5, 1);
				\draw[->,color=red] (0, 1) -- (2);
				\draw[-,color=red] (1) -- (4, 4);
				\draw[<-,color=red] (2) -- (2, 4);
				\draw[->,color=red] (4, 0) -- (1);
				\draw[-,color=red] (2, 0) -- (2);
				\draw[<-,color=v] (4) -- (3);
				\draw[-,color=v] (3) -- (0, 3);
				\draw[->,color=v] (5, 3) -- (4);
				\draw[-,color=v] (3, 0) -- (4);
				\draw[->,color=v] (1, 0) -- (3);
				\draw[<-,color=v] (4) -- (3, 4);
				\draw[-,color=v] (3) -- (1, 4);
				\node[e] (5) at (1, 1) {};
				\node[e] (6) at (2, 3) {};
				\node[e] (7) at (3, 1) {};
				\node[e] (8) at (4, 3) {};
				\node at (3.2, 2.2) {1};
				\node at (2.2, 1.8) {2};
				\node at (1.55, 0.65) {3};
				\node at (2.18, 3.75) {4};
				\node at (3.2, 0.2) {5};
				\node at (4.2, 3.7) {6};
				\node at (0.4, 0.65) {7};
				\node at (4.2, 1.8) {8};
				\draw[color=gray] (0, 3.5) -- (0.5, 4);
				\draw[color=gray,rounded corners=15pt] (5, 2) -- (4, 3) -- (3, 1) -- (2, 3) -- (1, 1) -- (1.5, 0);
				\draw[color=gray,rounded corners=15pt] (0.5, 0) -- (1, 1) -- (0, 2);
				\draw[color=gray,rounded corners=15pt] (2.5, 0) -- (3, 1) -- (3.5, 0);
				\draw[color=gray,rounded corners=15pt] (1.5, 4) -- (2, 3) --       (2.5, 4);
				\draw[color=gray,rounded corners=12pt] (3.5, 4) -- (4, 3) -- (5, 3.5);
			\end{tikzpicture}
		\end{center}
		\caption{Half-edge order and orientation corresponding to an Eulerian tour, with starting edge $e_1$. (See Figure \ref{f:labeled_duals_and_reference_orientation} for the labels.)	\label{f:Bernardi_bij_for_Eulerian_tour}}
	\end{figure}
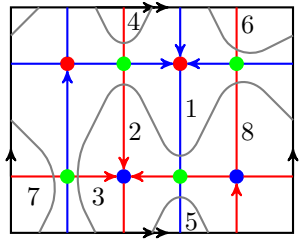
	
	\subsection{The canonical Bernardi action by Baker, Ding and Kim} \label{subsec:Bernardi_action}
	
	We recall the Bernardi action of the Jacobian of an embedded graph by Baker, Ding and Kim \cite{BDK}.
	
	The outline is the following:
	The Jacobian acts naturally on circuit reversal classes of orientations of $(G,\Sigma)$. Then, this action is composed with the Bernardi bijection between circuit reversal classes of orientations and quasi-trees. (The bijection depends on a fixed vertex $v\in V(G)$.)
	Next, we explain these notions and constructions.
	
	Given a reference orientation of $G$, any orientation can be encoded by a vector $\{\frac{1}{2},-\frac{1}{2}\}^E$. An entry $\frac{1}{2}$ for some $e\in E$ means that $e$ has the same orientation as in the reference orientation, while the entry $-\frac{1}{2}$ means that $e$ is reversed with respect to the reference orientation. 
	
	Two orientations are defined to be in the same circuit-reversal class, if the corresponding vectors $\mathbf{O}_1, \mathbf{O}_2 \in \{\frac{1}{2},-\frac{1}{2}\}^E$ are such that $\mathbf{O}_1-\mathbf{O}_2\in \langle \pi(\mathcal{C}(G,\Sigma))\rangle$.
	
	Baker, Ding and Kim show \cite[Theorem 3.7]{BDK} that for a vector $\textbf{O}_1$ corresponding to an orientation, and an element $x$ of the Jacobian, there is a unique circuit reversal class of orientations such that for some (and equivalently, any) orientation $\textbf{O}_2$ of this circuit reversal class, we have $\mathbf{x}+\textbf{O}_1 \sim_J \textbf{O}_2$. This defines a simply transitive action of $Jac(\mathcal{C})$ on circuit reversal classes of orientations. Even though both the definition of the Jacobian and the vectors of orientations depend on the reference orientation, \cite{BDK} shows that changing to a different reference orientation modifies the action in a trivial way.
	
	The Bernardi bijection between quasi-trees and circuit reversal classes of orientations is defined the following way \cite[Section 5.3]{BDK}: 
	Fix an edge $e_0\in E(G)$.
	Take the Eulerian tour of $G^{\bowtie}$ corresponding to the quasi-tree $Q$. As mentioned before, if $e\in Q$, then the tour crosses both half-edges of $e^*$, and it does not cross the half-edges of $e$. If $e\notin Q$, then the tour crosses both half-edges of $e$, and it does not cross the half-edges of $e^*$. 
	Let us call $primal^+(e)$ the half-edge of $e$ that contains the head of $e$ in the reference orientation, and $primal^-(e)$ the half-edge of $e$ that contains the tail. We similarly define $dual^+(e)$ and $dual^-(e)$ for $e^*$.
	
	Using the Eulerian tour, one can define an ordering of the half-edges that are being crossed: Take $primal^-(e_0)$ or $dual^+(e_0)$ (whichever is crossed by the tour) as the first half-edge,\footnote{We note that \cite{BDK} defines the first half-edge slightly differently. We explain this difference, and why it does not change the action in Remark \ref{rem:diff_choice_of_first_half-edge}.} and then number the half-edges that are crossed by the tour in the order in which they are crossed. (See Figure \ref{f:Bernardi_bij_for_Eulerian_tour} for an example.) Then orient an edge $e\notin Q$ (in this case $primal^-(e)$ and $primal^+(e)$ are the two crossed half-edges) such that the head is on the half-edge crossed earlier. For an edge $e\in Q$, the half-edges $dual^-(e)$ and $dual^+(e)$ are crossed. Orient $e^*$ so that the head is on the half-edge crossed later, and orient $e$ accordingly. In other words, let the head be on $primal^+(e)$ iff $dual^-(e)$ is crossed earlier than $dual^+(e)$. Again, see Figure \ref{f:Bernardi_bij_for_Eulerian_tour} for an example. 
	
	In the vector language, we can phrase the the Bernardi bijection the following way:
	The orientation corresponding to $Q$ will have $\frac{1}{2}$ on the coordinate of $e$ (recall that this means that the orientation of $e$ agrees with the reference orientation) if and only if for the Eulerian tour $\cE$ corresponding to $Q$:
	\begin{itemize}
		\item either $e\notin Q$ and $\cE$ crosses $primal^+(e)$ before $primal^-(e)$,
		\item or $e\in Q$ and $\cE$ crosses $dual^-(e)$ before $dual^+(e)$.
	\end{itemize}
	
	Note that by definition the entry corresponding to $e_0$ is going to be $-\frac{1}{2}$. 
	
	Baker, Ding and Kim \cite{BDK} show that the above is a bijection between quasi-trees, and circuit reversal classes of orientations. Composing this bijection with the previously defined Jacobian action on circuit reversal classes gives their Bernardi action, that they show to be independent of the choice of $e_0$. They denote the action by $\Gamma$.
	
	\begin{ex}\label{ex:Bernardi_action}
		For the graph on Figure \ref{f:labeled_duals_and_reference_orientation}, the Bernardi tour associates the orientation $(-\frac{1}{2},\frac{1}{2},\frac{1}{2},\frac{1}{2})$ to the quasi-tree $\{e_1,e_2,e_3\}$, and $(-\frac{1}{2},\frac{1}{2},-\frac{1}{2},\frac{1}{2})$ to the quasi-tree $\{e_1\}$. (For the latter, see also Figure \ref{f:Bernardi_bij_for_Eulerian_tour}.) Hence $(0,0,-1,0)\in Jac(\mathcal{C}(G,\Sigma))$ transforms $\{e_1,e_2,e_3\}$ into $\{e_1\}$.
	\end{ex}

	\subsection{Relationship to the sandpile group of the medial digraph} \label{ss:relationship}
	
	\begin{thm}\label{thm:Jac_isomorphic_with_medial_sandpile}
		$Jac(\mathcal{C}(G,\Sigma))$ is canonically isomorphic to $S(G^{\bowtie})$.
	\end{thm}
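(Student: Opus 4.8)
The plan is to realize both groups as quotients of $\mathbb{Z}^{E(G)}$ by explicit sublattices and then to identify those sublattices.

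First I would record a convenient presentation of $S(G^{\bowtie})$. Identify $E(G^{\bowtie})$ with the set $H(G)$ of half-edges of $G$, the edge of $G^{\bowtie}$ leaving $v_e$ ``at the endpoint $v$ of $e$'' corresponding to the half-edge $(e,v)$. Under this identification the out-star of $v_e$ is precisely the pair of half-edges of $e$, so, writing $\partial$ for the boundary map $\mathbb{Z}^{H(G)}\to\mathbb{Z}^{V(G^{\bowtie})}$ of $G^{\bowtie}$, the firing vector of $v_e$ equals $\partial(\mathbf{1}_{(e,v_1)}+\mathbf{1}_{(e,v_2)})$, and hence $S(G^{\bowtie})=\mathbb{Z}^{H(G)}/(\mathcal{Z}+\mathcal{B})$, where $\mathcal{Z}=\ker\partial$ is the integral cycle space of $G^{\bowtie}$ and $\mathcal{B}=\langle\mathbf{1}_{(e,v_1)}+\mathbf{1}_{(e,v_2)}:e\in E(G)\rangle$. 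Now fix the reference orientation of $G$ used in the definition of $Jac(\mathcal{C}(G,\Sigma))$ and define a linear map $\phi\colon\mathbb{Z}^{H(G)}\to\mathbb{Z}^{E(G)}$ by sending the half-edge of $e$ at the head of $e$ to $\mathbf{1}_e$ and the half-edge of $e$ at the tail of $e$ to $-\mathbf{1}_e$. Then $\phi$ is surjective with $\ker\phi=\mathcal{B}$, so it induces an isomorphism $\mathbb{Z}^{H(G)}/\mathcal{B}\xrightarrow{\sim}\mathbb{Z}^{E(G)}$, and therefore a (reference-orientation-dependent) isomorphism $S(G^{\bowtie})\cong\mathbb{Z}^{E(G)}/\phi(\mathcal{Z})$.

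With this reduction the theorem amounts to the single equality $\langle\pi(\mathcal{C}(G,\Sigma))\rangle=\phi(\mathcal{Z})$ inside $\mathbb{Z}^{E(G)}$. I would first prove the inclusion $\langle\pi(\mathcal{C}(G,\Sigma))\rangle\subseteq\phi(\mathcal{Z})$ geometrically. Let $\mathbf{C}$ be a signed cycle, with support the separating curve $C$ and orientation coming from a component $D$ of $\Sigma\setminus C$; I would produce a circulation $z_C\in\mathcal{Z}$ with $\phi(z_C)=\pi(\mathbf{C})$. The curve $C$ is disjoint from $G^{\bowtie}$ except at the medial vertices $v_e$ with $e\in C$ or $e^{*}\in C$, which it passes through exactly once, there separating the two in-edges and two out-edges of $v_e$ into a ``$D$-side'' pair and its complement. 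Pushing the arcs of $C$ off the interiors of the faces of $G^{\bowtie}$ that they cross --- always toward the side of $D$ --- converts $C$ into a well-defined closed walk $z_C$ in $G^{\bowtie}$, which is a circulation, being the boundary of the $2$-chain $D$ in the resulting cellular subdivision. The identity $\phi(z_C)=\pi(\mathbf{C})$ is then a local check at each $v_e$ met by $C$: the unique half-edge of $e$ traversed by $z_C$ is the head-half or the tail-half of $e$ according as the boundary orientation that $D$ induces along the edge $e$ of $C$ does or does not agree with the reference orientation of $e$, which gives $\phi(z_C)(e)=\mathbf{C}(e)$ when $e\in C$; when $e^{*}\in C$ the same picture gives $\phi(z_C)(e)=\mathbf{C}(e^{*})$, the requisite sign arising from the ``turn-left'' convention relating the reference orientations of $G$ and $G^{*}$; and $\phi(z_C)(e)=0$ when $e,e^{*}\notin C$.

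Granting the inclusion, I get a surjection $Jac(\mathcal{C}(G,\Sigma))=\mathbb{Z}^{E(G)}/\langle\pi(\mathcal{C})\rangle\twoheadrightarrow\mathbb{Z}^{E(G)}/\phi(\mathcal{Z})\cong S(G^{\bowtie})$. Since both sides are finite groups of order equal to the number of quasi-trees of $(G,\Sigma)$ --- for the Jacobian by \cite[Corollary 3.8]{BDK}, and for $S(G^{\bowtie})$ by Fact~\ref{fact:order_of_sandpile_group} together with the BEST theorem \cite{de1951circuits} and Bijection~\ref{bij_tour_quasi-tree} --- this surjection is an isomorphism (and consequently $\langle\pi(\mathcal{C}(G,\Sigma))\rangle=\phi(\mathcal{Z})$ after all). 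For canonicity, note that changing the reference orientation of $G$ negates the $e$-coordinate of $\phi$ exactly when it negates the $e$-th coordinate of every signed cycle, so the lattices $\langle\pi(\mathcal{C})\rangle$ and $\phi(\mathcal{Z})$ and the comparison map are all conjugated by the same coordinatewise sign changes, leaving the induced isomorphism $Jac(\mathcal{C}(G,\Sigma))\to S(G^{\bowtie})$ unchanged; it also visibly does not depend on the other auxiliary choices. The step I expect to be genuinely delicate is the inclusion $\langle\pi(\mathcal{C}(G,\Sigma))\rangle\subseteq\phi(\mathcal{Z})$ with correct signs: one must verify that the boundary orientation of a separating region, read off through the medial digraph and the map $\phi$, reproduces the signed-cycle vector $\pi(\mathbf{C})$ exactly --- including the primal/dual asymmetry of the signs, which is precisely what the ``turn-left'' rule accounts for.
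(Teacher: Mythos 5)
Your reduction is clean and correct: the identification of $E(G^{\bowtie})$ with half-edges of $G$, the presentation $S(G^{\bowtie})\cong\mathbb{Z}^{H(G)}/(\mathcal{Z}+\mathcal{B})\cong\mathbb{Z}^{E(G)}/\phi(\mathcal{Z})$, and the final counting and canonicity arguments all work, and your $\phi$ induces the same map as the paper's $\varphi$ (which sends $\mathbf{1}_e$ to $-\chi_{\med^+(e)}\sim\chi_{\med^-(e)}$, i.e.\ to $\chi$ of the out-edge of $v_e$ at the head of $e$). The gap is in the step you yourself flag as delicate, but it is not where you expect it: the problem is not the signs at the medial vertices met by $C$, it is the medial vertices \emph{not} met by $C$. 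Your push-off $z_C$ does not stay on the vertices $v_e$ with $e\in C$ or $e^*\in C$: when an arc of $C$ crossing a face $F$ of $G^{\bowtie}$ is pushed to the $D$-side of $\partial F$, the resulting path runs through all the intermediate medial vertices $v_g$ on that side, and at each such $v_g$ it traverses exactly one out-edge of $v_g$ (the two boundary edges of $F$ at $v_g$ are one in-edge and one out-edge). That out-edge contributes $\pm\mathbf{1}_g$ to $\phi(z_C)$, so $\phi(z_C)(g)$ is typically nonzero even though $g,g^*\notin C$ forces $\pi(\mathbf{C})(g)=0$. Your local check never addresses these vertices, and the needed cancellation does not happen in general.

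A concrete counterexample: let $G$ be the plane graph with vertices $v_1,v_2,v_3$, two parallel edges $f,f'$ from $v_1$ to $v_2$, and a pendant edge $e=v_1v_3$ drawn inside the region $D$ bounded by the closed curve $f\cup f'$. Then $C=\{f,f'\}$ is a cycle of the delta-matroid and $\pi(\mathbf{C})(e)=0$. The push-off of $C$ into $D$ replaces the arc through $F_{v_1}$ by the two medial edges $v_f\to v_e\to v_{f'}$ traversed backwards, so $z_C$ passes through $v_e$ once and uses the out-edge $v_e\to v_{f'}$ with coefficient $-1$; one computes $\phi(z_C)=\mathbf{1}_f-\mathbf{1}_{f'}+\mathbf{1}_e\neq\pi(\mathbf{C})=\mathbf{1}_f-\mathbf{1}_{f'}$. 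Pushing to the other side does not help (hang a pendant edge inside the lens and the same defect appears there). So the inclusion $\langle\pi(\mathcal{C}(G,\Sigma))\rangle\subseteq\phi(\mathcal{Z})$ is true but your witness is wrong. The paper circumvents exactly this issue by realizing $\varphi(\pi(\mathbf{C}))$ not as a cycle of $G^{\bowtie}$ but as a directed cut: firing every emerald node inside $D$ produces $\sum_{m\in D^-}\chi_m$, and the contributions of the emerald nodes interior to $D$ cancel because the medial edges with both endpoints in $D$ form an Eulerian subdigraph. In your lattice language, you only need to lift $\pi(\mathbf{C})$ into $\mathcal{Z}+\mathcal{B}$ (since $\phi(\mathcal{B})=0$, this still lands in $\phi(\mathcal{Z})$), and the cut-plus-Eulerian-cancellation argument does precisely that; insisting on an honest element of $\mathcal{Z}$ obtained by pushing off the curve is what breaks.
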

	
	\begin{remark}
		As Merino, Moffatt, Noble \cite{merino2023critical} and Baker, Ding, Kim \cite{BDK} both point out, for plane embedded graphs, $Jac(\mathcal{C}(G,\Sigma))$ agrees with $S(G)$. This is in accordance with the result from \cite{trinity}, that for plane graphs, $S(G^{\bowtie})$ is canonically isomorphic to $S(G)$ (and of course also to $S(G^*)$). 
	\end{remark}
	
	The definition of $Jac(\mathcal{C}(G,\Sigma))$ uses a reference orientation of $G$, while $S(G^{\bowtie})$ does not. Hence to be able prove the above result, we need to identify what kind of extra structure a reference orientation of $G$ (and the corresponding orientation of $G^*$) induces on $G^{\bowtie}$.
	
	Note that in $G^{\bowtie}$, the out-edges of the emerald node $v_e$ are the one stemming from between $dual^-(e)$ and $primal^+(e)$ and the one stemming from between $dual^+(e)$ and $primal^-(e)$. Let us call the former edge $\med^-(e)$, while the latter $\med^+(e)$. (See Figure \ref{f:def_of_varphi}.) 
	
	For better readability, we will denote linear equivalence of chip configurations with respect to $G^{\bowtie}$ by $\sim_{\bowtie}$ instead of $\sim_{G^{\bowtie}}$.
	
	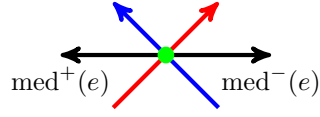
\begin{figure}
		\begin{center}
			\begin{tikzpicture}[-,>=stealth',auto,scale=0.7,ultra thick]
				\tikzstyle{r}=[circle,scale=0.5,fill,draw,color=red]
				\tikzstyle{b}=[circle,scale=0.5,fill,draw,color=v]
				\tikzstyle{e}=[circle,scale=0.5,fill,draw,color=green]
				\begin{scope}[shift={(-3.3,0)}]
					\draw[->,color=red] (0,0) -- (2,2);
					\draw[->,color=v] (2, 0) -- (0,2);
					\draw[->] (1, 1) -- (3,1);
					\draw[->] (1, 1) -- (-1,1);
					\node[e] (2) at (1, 1) {};
					\node at (3, 0.5) {$\med^-(e)$};
					\node at (-1, 0.5) {$\med^+(e)$};
				\end{scope}
			\end{tikzpicture}
		\end{center}
		\caption{
			The extra structure induced on $G^{\bowtie}$ by a reference orientation.
			\label{f:def_of_varphi}}
	\end{figure}
	
	\begin{proof}[Proof of Theorem \ref{thm:Jac_isomorphic_with_medial_sandpile}]
		Firstly, both groups have cardinality equal to the number of spanning quasi-trees of $G$. For $Jac(\mathcal{C}(G,\Sigma))$, this is proved in \cite{BDK}. To see it for $S(G^{\bowtie})$, note that by Fact \ref{fact:order_of_sandpile_group}, $|S(G^{\bowtie})|=|\Arb(G^{\bowtie},v)|$ for an arbitrary vertex $v$. As we have noted in Section \ref{sec:prelim}, the number of compatible Eulerian tours of $G^{\bowtie}$ agrees with this number. For the medial graph, each Eulerian tour is compatible, since each out-degree is 2. Finally, by Bouchet \cite{Bouchet}, the number of Eulerian tours of the medial graph agrees with the number of spanning quasi-trees.
		
		Recall that $S(G^{\bowtie})=\mathbb{Z}_0^{V(G^{\bowtie})}/\, \sim_{\bowtie}=\mathbb{Z}_0^E/\!\sim_{\bowtie}$ since $V(G^{\bowtie})=E(G)$.
		Also, $Jac(\mathcal{C}(G,\Sigma))=\mathbb{Z}^E/\sim_J$. 
		
		We give a linear transformation $\varphi: \mathbb{Z}^E \to \mathbb{Z}_0^E$ such that if $\mathbf{x}_1\sim_J \mathbf{x}_2$, then $\varphi(\mathbf{x}_1)\sim_{\bowtie} \varphi(\mathbf{x}_2)$.
		This implies that $\varphi$ induces a homomorphism from $Jac(\mathcal{C}(G))$ to $S(G^{\bowtie})$. Also, we show that any equivalence class of $S(G^{\bowtie})$ has a preimage at $\varphi$. Hence, the homomorphism is surjective.
		As $Jac(\mathcal{C}(G,\Sigma))$ and $S(G^{\bowtie})$ have the same cardinality, this homomorphism is necessarily an isomorphism.
		
		Let $\varphi(\mathbf{1}_e)=(-1)\cdot\chi_{\med^+(e)}$ for each $e\in E$, and extend it linearly. (See \eqref{eq:chi} for the definition of $\chi_{\med^+(e)}$.)
		
		Before proving that $\varphi$ satisfies the stated properties, let us point out that $\varphi(-\mathbf{1}_e)\sim_{\bowtie}(-1)\cdot\chi_{\med^-(e)}$. Indeed, by definition, $\varphi(-\mathbf{1}_e)=\chi_{\med^+(e)}=(\chi_{\med^+(e)}+\chi_{\med^-(e)})-\chi_{\med^-(e)}$. Since $\chi_{\med^-(e)}+\chi_{\med^+(e)}$ is exactly the effect of firing $v_e$ in $G^{\bowtie}$, indeed we have $\varphi(-\mathbf{1}_e)\sim_{\bowtie} (-1)\cdot\chi_{\med^-(e)}$.
		
		Now let us prove that $\varphi$ satisfies the required properties.
		We need to prove that if $\mathbf{x}_1 \sim_J \mathbf{x}_2$, then $\varphi(\mathbf{x}_1) \sim_{\bowtie} \varphi(\mathbf{x}_2)$. For this, we need that for any vector $\mathbf{x}\in\mathbb{Z}^E$ and signed cycle $\mathbf{C}\in \mathcal{C}(G,\Sigma)$, we have $\varphi(\mathbf{x}+\pi(\mathbf{C}))\sim \varphi(\mathbf{x})$. To see that, it is enough to show that for any signed circuit $\mathbf{C}\in \mathcal{C}(G,\Sigma)$, we have $\varphi(\pi(\mathbf{C}))\sim_{\bowtie} \mathbf{0}$.
		
		By definition, $\mathbf{C}$ corresponds to a minimal subtransversal $C\subseteq E(G)\cup E(G^*)$ such that $\Sigma\setminus C$ has two components.
		Let $D$ be the component of $\Sigma\setminus C$ such that $\partial D = -\mathbf{C}$. Let us denote by $D^-$ the set of edges of $G^{\bowtie}$ whose tail is an emerald node on the boundary of $D$ (that is, an edge of $C$), and whose head is outside $D$.
		Then we claim that $\varphi(\mathbf{C})\sim_{\bowtie} (-1)\cdot \sum_{f\in D^-} \chi_f$. 
		Indeed, for a positive edge $e$ of $\mathbf{C}$, we have $\med^+(e)\in D^-$ (and $\med^-(e)\notin D^-$), while for a negative edge $e$, we have $\med^-(e)\in D^-$ (and $\med^+(e)\notin D^-$)). As $\varphi(\mathbf{1}_e) \sim_{\bowtie} (-1)\cdot\chi_{\med^+(e)}$ and $\varphi(-\mathbf{1}_e) \sim_{\bowtie} (-1)\cdot \chi_{\med^-(e)}$, we get exactly the claimed formula.
		
		Next, we claim that $\sum_{f\in D^-} \chi_f$ is exactly the effect of firing each emerald node of $D$ in $G^{\bowtie}$. (By this, we mean emerald nodes that are on the boundary or in the interior of $D$.)
		To see this, note that the effect of firing each emerald node of $D$ in $G^{\bowtie}$ is that a chip passes through each medial edge that has its tail in $D$. 
		Note that there are three kinds of medial edges with respect to $D$: For edges of $C$ (that is, emerald nodes on the boundary of $D$), one medial out-edge and one medial in-edge are leading out of $D$. The other medial in-edge and out-edge lead into some other emerald node in $D$. For an emerald node in the interior of $D$ both two incoming and outgoing medial edges have their other endpoint also in $D$. Hence the subgraph of edges that are going between emerald nodes of $D$ is Eulerian. Thus, the effect of sending a chip through each of these latter edges cancels out. Hence the effect of sending a chip through each medial edge with tail in $D$ is the same as sending a chip through each edge of $D^-$, which is exactly $\sum_{f\in D^-} \chi_f$.
		With this, we have proved that $\sum_{f\in D^-} \chi_f$ is the effect of firing some nodes in $G^{\bowtie}$, thus, $\varphi(\pi(\mathbf{C}))\sim_{\bowtie} \mathbf{0}$.
		
		Finally, let us show that any equivalence class of $S(G^{\bowtie})$ has a preimage at $\varphi$. Recall that for an arbitrary emerald node $e$, $\chi_{\med^+(e)}=\varphi(-\mathbf{1}_e)$ and $\chi_{\med^-(e)}\sim_{\bowtie} \varphi(\mathbf{1}_e)$. In other words, for each edge $f$ of the medial graph, the equivalence class of $\chi_f$ contains an image of $\varphi$. As the medial graph is strongly connected, each vector in $\mathbb{Z}^E_0$ can be written as an integer linear combination of vectors $\{\chi_f \mid f\in E(G^{\bowtie})\}$. Hence indeed, each equivalence class of $S(G^{\bowtie})$ contains an image of $\varphi$. 
	\end{proof}
	
	\begin{thm}
		The canonical Bernardi action of the Jacobian of a ribbon graph by Baker, Ding and Kim agrees with the tour-rotor action of the sandpile group of the medial digraph.
	\end{thm}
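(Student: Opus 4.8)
The plan is to show that the canonical isomorphism $\varphi: Jac(\mathcal{C}(G,\Sigma)) \to S(G^{\bowtie})$ constructed in Theorem~\ref{thm:Jac_isomorphic_with_medial_sandpile} intertwines the two actions: that is, for $x \in Jac(\mathcal{C}(G,\Sigma))$ and a quasi-tree $Q$, the Bernardi action $\Gamma(x, Q)$ of Baker--Ding--Kim coincides with the tour-rotor action $r(\varphi(x), \cE_Q)$, where $\cE_Q$ is the Eulerian tour of $G^{\bowtie}$ corresponding to $Q$ under Bijection~\ref{bij_tour_quasi-tree}. Since the tour-rotor action is simply transitive (Corollary~\ref{cor:Euler_sandpile_acts_on_tours}) and the Bernardi action is simply transitive, it suffices to match the two actions on a single generating family of group elements, namely on the classes of $\pm\mathbf{1}_{e_0}$ for each edge $e_0 \in E(G)$; equivalently, by the vector description, one must show the effect of adding $\chi_{\med^{+}(e_0)}$ (resp.\ $\chi_{\med^{-}(e_0)}$) to a tour via tour-rotor matches reversing the orientation bit at $e_0$ under the Bernardi bijection.

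First I would fix the bookkeeping edge. Both constructions allow a free choice: the Bernardi bijection depends on a fixed edge $e_0 \in E(G)$, and the tour-rotor action (via Definition~\ref{def:canonical_action_on_tours}) needs a fixed arc $\overrightarrow{v_a v_b}$ of $G^{\bowtie}$, but both are independent of that choice. So I would choose, for the analysis of the generator at $e_0$, the medial arc leaving $v_{e_0}$ that corresponds to the "first half-edge" of the Bernardi ordering — concretely the arc stemming from $primal^{-}(e_0)$ or $dual^{+}(e_0)$, whichever the tour crosses. With this synchronization, the Bernardi orientation vector and the in-arborescence $A_{\overrightarrow{v_{e_0}\cdot}}(\cE_Q)$ are read off the same Eulerian tour starting at the same place, which is what makes the computation tractable. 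Then I would translate the statement into the chip-and-rotor linear equivalence language of Proposition~\ref{prop:alt_def_rotor_action}: $r(\varphi(\pm\mathbf{1}_{e_0}), \cE)=\cE'$ becomes $(\varphi(\pm\mathbf{1}_{e_0}), A(\cE)\cup \overrightarrow{e_0\cdot}) \sim_{\bowtie} (\mathbf{0}, A(\cE')\cup \overrightarrow{e_0\cdot})$, and by Lemma~\ref{lem:change_of_arborescences_when_changing_first_edge} and Fact~\ref{fact:chip_lin_ekv_and_rotor_lin_ekv} this reduces to a single routing step at the emerald node $v_{e_0}$ (since $v_{e_0}$ has out-degree $2$, and $\varphi(\pm\mathbf{1}_{e_0})= \mp\chi_{\med^{\pm}(e_0)}$ is exactly a chip imbalance resolvable by one such routing).

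The heart of the proof is then a local analysis at the emerald node $v_{e_0}$: I would show that performing one rotor-routing step at $v_{e_0}$ — which flips which of $\med^{+}(e_0), \med^{-}(e_0)$ is the "last out-edge" used — changes the corresponding Eulerian tour precisely by a local reroute at $v_{e_0}$, and that under Bijection~\ref{bij_tour_quasi-tree} this is the operation that toggles whether the tour crosses $e_0$ or $e_0^{*}$, i.e.\ toggles membership of $e_0$ in $Q$ in the appropriate way; simultaneously, I would check that the induced reorientation in the Bernardi vector language is exactly the flip of the $e_0$-coordinate by $\pm 1$, which is what adding $\pm\mathbf{1}_{e_0}$ to the Jacobian element does. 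This requires carefully tracking the order in which the tour traverses the four half-edges around $v_{e_0}$ relative to the reference orientation — this is where the sign conventions $\med^{\pm}$, $primal^{\pm}$, $dual^{\pm}$ all must line up, and it is where the subtlety in the choice of first half-edge (Remark~\ref{rem:diff_choice_of_first_half-edge}) enters. I expect this local combinatorial verification of the sign matching to be the main obstacle; once it is done for the generators, simple transitivity of both actions, together with the fact that both actions are independent of their respective base choices, upgrades the agreement from generators to all group elements and completes the proof.
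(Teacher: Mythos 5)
Your overall scaffolding matches the paper's: transport the Bernardi action through the isomorphism $\varphi$, synchronize the base choices by taking the starting medial arc to be the one at $v_{e_0}$ determined by the Bernardi first half-edge (the paper takes $\med^+(e_0)$), translate into chip-and-rotor equivalence via Proposition \ref{prop:alt_def_rotor_action}, and do a four-case local analysis of how the tour traverses the half-edges around an emerald node relative to the reference orientation. However, the heart of your argument has a genuine gap. You reduce to the generators $\pm\mathbf{1}_{e_0}$ and claim that the corresponding move is a \emph{single} routing at $v_{e_0}$ which ``toggles whether the tour crosses $e_0$ or $e_0^*$, i.e.\ toggles membership of $e_0$ in $Q$.'' This is false: by construction every Bernardi orientation has entry $-\tfrac12$ at the base edge $e_0$, so $\mathbf{O}_Q+\mathbf{1}_{e_0}$ is never itself a Bernardi orientation, and finding the quasi-tree $Q'=\Gamma(\mathbf{1}_{e_0},Q)$ forces a nontrivial circuit reversal that changes the orientation (hence the quasi-tree) at edges far from $e_0$. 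Indeed, in the paper's proof the orientations of $Q$ and $Q'$ \emph{agree} at $e_0$, so $v_{e_0}$ is precisely the one emerald node that is never routed --- the opposite of your picture. Two further problems with the single-step reduction: toggling one edge's membership in a quasi-tree need not yield a quasi-tree, and changing one rotor of an in-arborescence need not yield an in-arborescence, so after your one routing step you cannot read off $\cE'$ from Proposition \ref{prop:alt_def_rotor_action}.

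The idea you are missing is to replace the Jacobian element by the representative $\mathbf{O}'-\mathbf{O}$ (the difference of the two actual Bernardi orientations, which is legitimate since $\mathbf{v}\sim_J\mathbf{O}'-\mathbf{O}$ and $\varphi$ respects $\sim_J$), and to prove the global dictionary: for every edge $f\neq e_0$, the rotor $\varrho(f)$ of $A_{\med^+(e_0)}(\cE)\cup\med^+(e_0)$ equals $\med^+(f)$ or $\med^-(f)$ according as $\mathbf{O}(f)=+\tfrac12$ or $-\tfrac12$ (this is where the four-case local check actually lives). With that dictionary for both $\cE$ and $\cE'$, routing \emph{once each} of the emerald nodes where $\mathbf{O}$ and $\mathbf{O}'$ differ carries $(\varphi(\mathbf{O}'-\mathbf{O}),\varrho)$ to $(\mathbf{0},\varrho')$, which handles all group elements simultaneously and makes the generator reduction unnecessary. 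As a minor point, your sign formula $\varphi(\pm\mathbf{1}_{e_0})=\mp\chi_{\med^{\pm}(e_0)}$ is off for the lower sign: the paper has $\varphi(-\mathbf{1}_e)\sim_{\bowtie}-\chi_{\med^-(e)}$.
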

	\begin{proof}
		Suppose that for some spanning quasi-trees $Q$ and $Q'$ of the embedded graph $(G,\Sigma)$, and for a vector $\mathbf{v}\in Jac(\mathcal{C}(G))$, we have $\Gamma(\mathbf{v}, Q)=Q'$.
		Let $\cE$ and $\cE'$ be the Eulerian tours corresponding respectively to $Q$ and $Q'$ at Bouchet's Bijection \ref{bij_tour_quasi-tree}.
		We need to show that $r(\varphi(\mathbf{v}),\cE)=\cE'$.
		
		Fix the edge $e_0\in E(G)$ used in the definition of the Bernardi action (Subsection \ref{subsec:Bernardi_action}).
		Let $\mathbf{O}$ and $\mathbf{O}'$ be respectively the two orientations corresponding to $Q$ and $Q'$. Recall that these were defined using the Eulerian tours $\cE$ and $\cE'$, with the help of the first edge $e_0$. 
		By the definition of the action $\Gamma$, we have $\mathbf{v} \sim_J \mathbf{O'}-\mathbf{O}$.
		Hence as we saw it in the proof of Theorem \ref{thm:Jac_isomorphic_with_medial_sandpile}, $\varphi(\mathbf{v}) \sim_{\bowtie} \varphi(\mathbf{O'}-\mathbf{O})$.
		Thus, it is enough to check $r(\varphi(\mathbf{O'}-\mathbf{O}),\cE)=\cE'$. 
		
		To show this, let us also choose a starting medial edge for the tour-rotor action. Let this be $\med^+(e_0)$. Note that with this choice, at the starting point of an Eulerian tour, we cross $primal^-(e_0)$ or $dual^+(e_0)$ depending on whether the tour corresponds to a quasi-tree not containing $e_0$ or containing $e_0$. Hence with this choice of starting edge, the order in which we cross half-edges in $E(G)\cup E(G^*)$ agrees with the construction in Subsection \ref{subsec:Bernardi_action}. 
		
		Recall that $A_{\med^+(e_0)}(\cE)$ is the in-arborescence rooted at $v_{e_0}$ corresponding to $\cE$, when $\med^+(e_0)$ is taken as the first edge of $\cE$. By definition, $r(\varphi(\mathbf{O'}-\mathbf{O}),\cE)=\cE'$ is equivalent to 
		\begin{equation}\label{eq}
			(\varphi(\mathbf{O'}-\mathbf{O}), A_{\med^+(e_0)}(\cE) \cup \med^+(e_0))\sim_{\bowtie} (\mathbf{0},A_{\med^+(e_0)}(\cE')\cup \med^+(e_0)).
		\end{equation}
		To ease notation, denote $\varrho=A_{\med^+(e_0)}(\cE)\cup \med^+(e_0)$ and $\varrho'=A_{\med^+(e_0)}(\cE')\cup \med^+(e_0)$. To show \eqref{eq}, let us examine the relationship of $\varrho$ and $\mathbf{O}$.
		By definition, $\varrho$ 
		contains for each emerald node $v_f$ (with $f\neq e_0$) the out-edge that is last traversed in $\cE$, when $\med^+(e_0)$ is the first edge of the tour.
		
		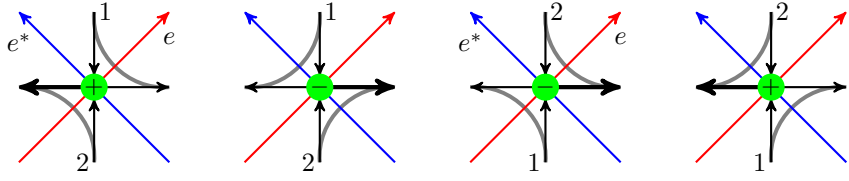
\begin{figure}
			\begin{center}
				\begin{tikzpicture}[-,>=stealth',auto,scale=0.5, thick]
					\tikzstyle{r}=[circle,scale=0.5,fill,draw,color=red]
					\tikzstyle{b}=[circle,scale=0.5,fill,draw,color=v]
					\tikzstyle{e}=[circle,scale=0.5,fill,draw,color=green]
					\begin{scope}[shift={(-8,0)}]
						\path[line width=1.5pt,color=gray]
						(1,3) edge [bend right=50] node {} (3,1)
						(1,-1) edge [bend right=50] node {} (-1,1);
						\draw[->,color=red] (-1,-1) -- (3,3);
						\draw[->,color=v] (3, -1) -- (-1,3);
						\draw[->] (1, 1) -- (3,1);
						\draw[->, ultra thick] (1, 1) -- (-1,1);
						\draw[<-] (1, 1.3) -- (1,3);
						\draw[<-] (1, 0.7) -- (1,-1);
						\node[e] (2) at (1, 1) {\small $+$};
						\node (2) at (1, 1) {\small $+$};
						\node at (1.3,3) {1};
						\node at (0.7,-1) {2};
						\node at (3, 2.3) {$e$};
						\node at (-1,2.2) {$e^*$};
					\end{scope}
					\begin{scope}[shift={(-2,0)}]
						\path[line width=1.5pt,color=gray]
						(1,3) edge [bend left=50] node {} (-1,1)
						(1,-1) edge [bend left=50] node {} (3,1);
						\draw[->,color=red] (-1,-1) -- (3,3);
						\draw[->,color=v] (3, -1) -- (-1,3);
						\draw[->,ultra thick] (1, 1) -- (3,1);
						\draw[->] (1, 1) -- (-1,1);
						\draw[<-] (1, 1.3) -- (1,3);
						\draw[<-] (1, 0.7) -- (1,-1);
						\node[e] (2) at (1, 1) {\small $+$};
						\node (2) at (1, 1) {\small $-$};
						\node at (1.3,3) {1};
						\node at (0.7,-1) {2};
					\end{scope}
					\begin{scope}[shift={(4,0)}]
						\path[line width=1.5pt,color=gray]
						(1,3) edge [bend right=50] node {} (3,1)
						(1,-1) edge [bend right=50] node {} (-1,1);
						\draw[->,color=red] (-1,-1) -- (3,3);
						\draw[->,color=v] (3, -1) -- (-1,3);
						\draw[->,ultra thick] (1, 1) -- (3,1);
						\draw[->] (1, 1) -- (-1,1);
						\draw[<-] (1, 1.3) -- (1,3);
						\draw[<-] (1, 0.7) -- (1,-1);
						\node[e] (2) at (1, 1) {\small $+$};
						\node (2) at (1, 1) {\small $-$};
						\node at (1.3,3) {2};
						\node at (0.7,-1) {1};
						\node at (3, 2.3) {$e$};
						\node at (-1,2.2) {$e^*$};
					\end{scope}
					\begin{scope}[shift={(10,0)}]
						\path[line width=1.5pt,color=gray]
						(1,3) edge [bend left=50] node {} (-1,1)
						(1,-1) edge [bend left=50] node {} (3,1);
						\draw[->,color=red] (-1,-1) -- (3,3);
						\draw[->,color=v] (3, -1) -- (-1,3);
						\draw[->] (1, 1) -- (3,1);
						\draw[->,ultra thick] (1, 1) -- (-1,1);
						\draw[<-] (1, 1.3) -- (1,3);
						\draw[<-] (1, 0.7) -- (1,-1);
						\node[e] (2) at (1, 1) {\small $+$};
						\node (2) at (1, 1) {\small $+$};
						\node at (1.3,3) {2};
						\node at (0.7,-1) {1};
					\end{scope}
				\end{tikzpicture}
			\end{center}
			\caption{The red and violet edges show the reference orientation. The gray arcs indicate how the Eulerian tour traverses the neighborhood of an emerald node $v_e$. The numbers indicate which strand is traversed first and which one is traversed second. The sign in the middle indicates the sign of the edge in the orientation obtained by the Bernardi bijection. The thickened medial edge is the last out-edge of the Eulerian tour at the emerald node. \label{f:cases}}
		\end{figure}
		
		We claim that for $f\neq e_0$, if $\mathbf{O}(f)=-\frac{1}{2}$, then 
		$\varrho(f)=\med^-(f)$ , while if $\mathbf{O}(f)=+\frac{1}{2}$, then 
		$\varrho(f)=\med^+(f)$.
		
		This can be proved by examining the local neighborhood of $v_f$. There are 4 cases how the Eulerian tour traverses the medial edges incident to $v_f$. These are depicted on Figure \ref{f:cases}. For example, one possibility is that the Eulerian tour first arrives to $v_f$ on the medial edge between the half-edges $primal^+(f)$ and $dual^+(f)$ and leaves through $\med^-(f)$ (that is, between $dual^-(f)$ and $primal^+(e)$). This is shown on the left panel of Figure \ref{f:cases}. In this case, $\varrho(f)=\med^+(f)$ since that out-edge is traversed later. Also, by the definition of the Bernardi bijection, the orientation corresponding to the tour agrees with the reference orientation on $f$, hence $\mathbf{O}(f)=+\frac{1}{2}$. The rest of the cases can be checked similarly, see Figure \ref{f:cases}.
		
		For $e_0$, note that by definition the first traversed half-edge of the tour is $dual^+(e_0)$ or $primal^-(e_0)$ (depending on whether $e_0\in Q$ or not), hence $O(e_0)=-\frac{1}{2}$. Also by definition $\varrho(e_0)=\med^+(e_0)$.
		
		Let $P\subseteq E$ be the set of edges $f$ such that $\mathbf{O'}(f) - \mathbf{O}(f)=1$, and let $N\subseteq E$ be the set of edges such that $\mathbf{O'}(f)-\mathbf{O}(f)=-1$. (For $f\in E-P-N$, we have $\mathbf{O'}(f)-\mathbf{O}(f)=0$.)
		Note that $e_0\in E-P-N$ by construction.
		
		Then, $\varphi(\mathbf{O'}-\mathbf{O}) \sim_{\bowtie} (-1)\cdot [\sum_{f\in P} \chi_{\med^+(f)} + \sum_{f\in N} \chi_{\med^-(f)}]$.
		
		It is left to show that 
		\begin{align*}
			((-1)\cdot [\sum_{f\in P} \chi_{\med^+(f)} + \sum_{f\in N} \chi_{\med^-(f)}], \varrho)  \sim_{\bowtie} (\mathbf{0},\varrho').
		\end{align*}
		
		We show that routing each emerald node in $P\cup N$ exactly once transforms $((-1)\cdot [\sum_{f\in P} \chi_{\med^+(f)} + \sum_{f\in N} \chi_{\med^-(f)}], \varrho)$ into  $(\mathbf{0},\varrho')$, finishing the proof.
		
		Take an emerald node $f\in P$. Then, $\mathbf{O}'(f)=\frac{1}{2}$ and  $\mathbf{O}(f)=-\frac{1}{2}$, whence $\varrho'(f)=\med^+(f)$ and  $\varrho(f)=\med^-(f)$.
		By routing $f$ once such that the initial rotor at $f$ is $\med^-(f)$, the rotor at $f$ turns to $\med^+(f)$, and the chip configuration increases by $\chi_{\med^+(f)}$. The rotors at other vertices do not change.
		
		Completely analogously, for an emerald node $f\in N$, we have $\mathbf{O}'(f)=-\frac{1}{2}$ and  $\mathbf{O}(f)=+\frac{1}{2}$, whence $\varrho'(f)=\med^-(f)$ and  $\varrho(f)=\med^+(f)$.
		By routing $f$ once such that the initial rotor at $f$ is $\med^+(f)$, the rotor at $f$ turns to $\med^-(f)$, and the chip configuration increases by $\chi_{\med^-(f)}$. The rotors at other vertices do not change. 
		
		This means that if we route once each emerald node in $P\cup N$ (in any order), then the chip configuration becomes $\mathbf{0}$, and the rotor configuration becomes $\varrho'$, finishing the proof.
	\end{proof}
	
	\begin{ex}
		Figure \ref{f:action_ex} shows that $(-1,0,1,0)\in S(G^{\bowtie})$ transforms the quasi-tree $\{e_1,e_2,e_3\}$ into the quasi-tree $\{e_1\}$. (For the labels, see Figure \ref{f:labeled_duals_and_reference_orientation}.) Also, in Example \ref{ex:Bernardi_action}, we argued that $(0,0,-1,0)\in Jac(\mathcal{C}(G,\Sigma))$ transforms $\{e_1,e_2,e_3\}$ into $\{e_1\}$. Note that $\chi(\med^-(e_3))=(1,0,-1,0)$, hence indeed $\varphi(0,0,-1,0)=(-1,0,1,0)$.
	\end{ex}
	
	\begin{remark}\label{rem:diff_choice_of_first_half-edge}
		As we mentioned before, when defining the half-edge order corresponding to a quasi-tree, Baker, Ding and Kim \cite{BDK} define the first half-edge in a slightly different way compared to us. We explain this difference here, and why it does not change the group action.
		
		Baker, Ding and Kim define the first half-edge to be either $primal^-(e_0)$ or $dual^-(e_0)$ (depending on which one is crossed by the tour), while we take $primal^-(e_0)$ or $dual^+(e_0)$. Their choice corresponds to fixing the first edge of the tour as the medial edge reaching $v_{e_0}$ between $primal^+(e_0)$ and $dual^+(e_0)$, and start numbering the half-edges at $e_0$. While our choice corresponds to fixing the first edge of the tour to be $\med^+(e_0)$, and also starting the numbering of the half-edges at $e_0$.
		
		They prove in \cite[Theorem 5.15]{BDK} that their bijection agrees with the $\beta_p$ bijection (defined in \cite[Lemma 5.8]{BDK}), that is defined using a cycle signature. Our choice of starting edge gives the $\beta_q$ bijection (again, as in \cite[Lemma 5.8]{BDK}). Baker, Ding and Kim show that these two bijections yield the same action of the Jacobian \cite[Corollary 5.10]{BDK}.
	\end{remark}
	
	
	%
	%
	%
	
	Let us end with one more open problem.
	Note that $S(G^{\bowtie})$ acts on linear equivalence classes of chip configurations with $d$ chips, by addition. This motivates the following question, where a positive answer would give another simply transitive action of $S(G^{\bowtie})$ on quasi-trees.
	
	\begin{prob}\label{prob:repr_system_to_quasi-trees}
		Is there positive integer $d$ and a natural assignment mapping quasi-trees of $(G,\Sigma)$ to chip-configurations of $G^{\bowtie}$ with $d$ chips, such that different quasi-trees are mapped to linearly nonequivalent chip-configurations? 
	\end{prob}
	
	In \cite{trinity}, it is shown that for plane embedded graphs, the characteristic vectors of spanning trees are such chip-configurations. However, quasi-trees might have different sizes, hence characteristic vectors are not suitable in general for Problem \ref{prob:repr_system_to_quasi-trees}.

\section*{Acknowledgments} I would like to thank Tamás Kálmán for valuable discussions, and the anonymous referees for their helpful feedback.

\bibliographystyle{amsplain}


\begin{aicauthors}
\begin{authorinfo}[pgom]
  Lilla Tóthmérész\\
  ELTE Eötvös Loránd University and HUN-REN Alfréd Rényi Institute of Mathematics\\
  Budapest, Hungary\\
  lilla.tothmeresz\imageat{}ttk\imagedot{}elte\imagedot{}hu \\
  \url{https://tmlilla.web.elte.hu}
\end{authorinfo}
\end{aicauthors}

\end{document}